\newcommand{\fall}{\:\forall\:}
\newcommand{\ex}{\:\exists\:}
\newcommand{\ZZ}{\mathbb{Z}}
\newcommand{\EE}{\mathbb{E}}
\newcommand{\RR}{\mathbb{R}}
\newcommand{\restr}[2]{\left.#1\right|_{#2}}
\newcommand{\abs}[1]{\left\lvert#1\right\rvert}
\newcommand{\abss}[1]{\lvert#1\rvert}
\newcommand{\mnorms}[1]{\lVert#1\rVert}  
\newcommand{\mnorm}[1]{\left\lVert#1\right\rVert}
\newcommand{\setn}[1]{\left\{#1\right\}}
\newcommand{\setcond}[2]{\left\{#1\: :\: #2\right\}}
\newcommand{\defeq}{\mathrel{\mathop:}=}
\newcommand{\zB}{e.g., }
\newcommand{\dah}{i.e., }
\newcommand{\lr}[1]{\!\left(#1\right)}
\newcommand{\clseg}[2]{\left[#1,#2\right]}
\newcommand{\mc}[2]{B(#1,#2)}
\newcommand{\ms}[2]{S(#1,#2)}
\newcommand{\mcg}[3]{B_{#1}(#2,#3)}
\newcommand{\msg}[3]{S_{#1}(#2,#3)}
\newcommand{\cRR}{\overline{\mathbb{R}}}
\newcommand{\p}{\partial}
\newcommand{\skpr}[2]{\left\langle#1 \,\middle\vert\, #2\right\rangle}
\newcommand{\norel}{\mathrel{\phantom{=}}}
\newcommand{\boxx}{\ensuremath{\mbox{\small$\,\square\,$}}}
\newcommand{\enquote}[1]{``#1''}
\newlength{\mywidth}
\newcommand{\gleich}[1][\rule{0pt}{0pt}]{\stackrel{\text{\makebox[\mywidth]{\centering #1}}}{=}}
\newcommand{\groessergleich}[1][\rule{0pt}{0pt}]{\stackrel{\text{\makebox[\mywidth]{\centering #1}}}{\geq}}
\newcommand{\groesser}[1][\rule{0pt}{0pt}]{\stackrel{\text{\makebox[\mywidth]{\centering #1}}}{>}}
\DeclareMathOperator{\lin}{lin}
\DeclareMathOperator{\aff}{aff}
\DeclareMathOperator{\co}{conv}
\DeclareMathOperator{\bd}{bd}
\DeclareMathOperator{\ri}{relint}
\DeclareMathOperator{\ext}{ext}
\DeclareMathOperator{\Proj}{proj}
\DeclareMathOperator{\ft}{ft}
\DeclareMathOperator{\dist}{dist}
\DeclareMathOperator{\nor}{nor}
\let \eps \varepsilon
\let \piup \uppi
\begin{document}
\parindent 0pt
\title{Minsum Location Extended to Gauges and to Convex Sets\footnote{submitted to Journal of Optimization Theory and Applications}}


\author{Thomas Jahn \and Yaakov S. Kupitz \and Horst Martini \and Christian Richter}


\institute{T. Jahn (corresponding author) \at
              Faculty of Mathematics, Chemnitz University of Technology, 09107 Chemnitz, Germany\\
              \email{thomas.jahn@mathematik.tu-chemnitz.de}
           \and
           Y. S. Kupitz \at
              Institute of Mathematics, The Hebrew University of Jerusalem, Jerusalem, Israel\\
               \email{kupitz@math.huji.ac.il}
           \and
           H. Martini \at
              Faculty of Mathematics, Chemnitz University of Technology, 09107 Chemnitz, Germany\\
               \email{horst.martini@mathematik.tu-chemnitz.de}
           \and
           C. Richter \at
              Institute of Mathematics, Friedrich Schiller University, 07737 Jena, Germany\\
               \email{christian.richter@uni-jena.de}
}

\date{Received: date / Accepted: date}

\maketitle

\begin{abstract}
One of the oldest and richest problems from continuous location science is the famous Fermat--Torricelli problem, asking for the unique point in Euclidean space that has minimal distance sum to $n$ given (non-collinear) points. Many natural and interesting generalizations of this problem were investigated, \zB by extending it to non-Euclidean spaces and modifying the used distance functions, or by generalizing the configuration of participating geometric objects. In the present paper, we extend the Fermat--Torricelli problem in a two-fold way: more general than for normed spaces, the unit balls of our spaces are compact convex sets having the origin as interior point (but without symmetry condition), and the $n$ given objects can be general convex sets (instead of points). We combine these two viewpoints, and the presented sequence of new theorems follows in a comparing sense that of theorems known for normed spaces. Some of these results holding for normed spaces carry over to our more general setting, and others not. In addition, we present analogous results for related questions, like, \zB for Heron's problem. And finally we derive a collection of results holding particularly for the Euclidean norm.
\keywords{convex distance function \and directional derivatives \and (generalized) $d$-segments \and duality \and Fermat-Torricelli problem \and gauge \and Hahn--Banach theorem \and Heron's problem \and metric projection \and Minkowski space \and norming functional \and polarity \and Steiner-Weber problem \and subdifferential \and support function}
\subclass{46A22 \and 46B20 \and 49K10 \and 49N15 \and 52A20 \and 52A21 \and 52A41 \and 90B85 \and 90C25 \and 90C46}
\end{abstract}

\section{Introduction}
\label{chap:introduction}
The classical Fermat-Torricelli problem asks for the unique point minimizing the distance sum to finitely many non-collinear points in $d$-dimensional Euclidean space. It is, regarding the variety of contributions and contributors, one of the richest problems from continuous location science. Going back to the 17th century, it still creates new research problems. At its historically first step, it was connected with famous names like R. Descartes, P. de Fermat, E. Torricelli, V. Viviani, B. Cavalieri, and E. W. von Tschirnhaus. Later on, mathematicians like J. Bertrand, C. F. Gauss, J. Steiner, L. Lindel\"{o}f, R. Sturm, J. Hadamard, G. Polya, H. W. Kuhn, P. Erd\H{o}s and many others added results more related to modern branches of mathematics, like (convex) optimization, approximation theory, functional analysis, algebraic geometry, convex analysis, computational geometry etc. (a comprehensive representation is given in \cite[Chapter II]{BoltyanskiMaSo1999}). More recently, deeper generalizations of this problem were added, for example extensions to non-Euclidean spaces of different types, and generalizations of the participating geometric configuration. Extensions to normed spaces (i.e., to real, finite dimensional Banach spaces; see \cite{DurierMi1985}, \cite{MichelotLe1987}, and \cite{MartiniSwWe2002}) and replacements of the participating points by hyperplanes or spheres (cf., \zB \cite{MartiniSc2001} and \cite{KoernerMaSc2012}) yield especially interesting and geometrically rich approaches and algorithms. Our goal here is to continue this line of research by generalizing the latter two viewpoints at the same time, thus reaching a  more general step in the combined sense. Namely, first we extend the basic theory on this problem from normed spaces to generalized Minkowski spaces, having arbitrary convex bodies as unit balls which no longer need to be centrally symmetric; they create general convex distance functions (gauges). And second, we generalize the geometric properties of the participating given objects: they are no longer points or hyperplanes, but arbitrary convex sets. Related topics are also discussed in \cite[pp.~146--168]{MordukhovichNa2013}.

To do this, we give in our second section also novel extensions of several common notions from Banach space theory to gauges, \zB by introducing generalized norming functionals. Such notions are fundamental for the geometric description of the solution sets of our location problems. It turns out that for introducing generalized norming functionals, a correspondingly generalized version of the Hahn--Banach theorem is necessary. So we also use a version of the Hahn--Banach theorem extended to gauges.

In the third section we derive results on the Fermat--Torricelli problem for finite point sets with respect to  gauges, mainly generalizing the sequence of theorems presented for normed spaces in \cite{MartiniSwWe2002} (see also \cite{DurierMi1985}). It turns out that some of the results from \cite{MartiniSwWe2002} directly carry over to generalized Minkowski spaces, and some not. Denoting by $\ft(P)$ the solution set of the Fermat--Torricelli problem for a non-collinear finite point set $P$ in a generalized Minkowski space, we clarify the cases when $\ft(P)$ is a singleton or a polytope. Strict convexity of the unit ball is now only a necessary criterion for the property that $\ft(P)$ is a singleton, for every $P$ as above. We present also characterizations of normed spaces within the family of generalized Minkowski spaces via an extension of Menger's notion of $d$-segments (often needed for metrical problems in normed spaces; see \cite{Menger1928} and \cite[Chapter II]{BoltyanskiMaSo1997}). We show that in this more general setting $\ft(P)$ can be represented as intersection of certain cones determined by the boundary structure of the convex unit balls. In this section we also investigate the boundary structure of sublevel sets for the generalized Fermat--Torricelli problem; their geometry again depends on the boundary structure of the unit balls.

In the fourth section we combine the approach via gauges with the replacement of the given finite point set by a given finite family of arbitrary convex sets, even considering finitely many respective gauges. Using generalized norming functionals and a corresponding extension of distance functions to gauges, we derive similar results as they exist for given point sets, and some interesting additional observations are obtained, too. For example, we investigate also Heron's problem in our generalized setting. In the final fifth section, further theorems holding only for the Euclidean norm are derived (the given objects are still arbitrary convex sets).

\section{Generalized Minkowski Spaces and Basics from Convex Analysis}
\label{chap:basics}
Let $X$ be a finite-dimensional real vector space. The \emph{closed line segment} between $x,y \in X$ is denoted by $[x,y]$. The symbol $\ri(A)$ stands for the \emph{relative interior} of a subset $A \subseteq X$, i.e., for the interior of $A$ in the natural topology of the affine span of $A$. For finite subsets $\setn{x_1,\ldots,x_n}$ of $X$, we write $\lin\setn{x_1,\ldots,x_n}$ and $\aff\setn{x_1,\ldots,x_n}$ for their \emph{linear} and \emph{affine hulls}, respectively. Let $\RR_+=[0,+\infty[$ and $\RR_{++}=\mathopen{]}0,+\infty\mathclose{[}$. As usual in convex analysis, there will be functions $X\to \cRR$, where $\cRR=\RR\cup\setn{+\infty,-\infty}$ is the \emph{extended real line} with the conventions $0(+\infty)\defeq +\infty$, $0(-\infty)\defeq 0$, $(+\infty)+(-\infty)\defeq +\infty$.
\begin{definition}\label{def-gauge}
A \emph{gauge} on $X$ is a functional $\gamma: X\to \RR_+$ satisfying the conditions
\begin{enumerate}[label={(\alph*)},leftmargin=0.3cm,align=left]
\item{$\gamma(x)=0\Longrightarrow x=0$ \, for all $x \in X$,\label{linefree}}
\item{$\gamma(\lambda x)=\lambda \gamma(x)$ \, for all $x \in X$ and $\lambda\in\RR_+$ \, \emph{(positive homogeneity)},\label{homogeneous}}
\item{$\gamma(x+y)\leq \gamma (x)+\gamma(y)$ \, for all $x,y \in X$ \, \emph{(subadditivity, triangle inequality)}\label{subadditive},}
\end{enumerate}
see, \zB \cite{Rockafellar1972, Plastria1992b}. The pair $(X,\gamma)$ is called a \emph{generalized Minkowski space}, and with $\gamma(x)=\gamma(-x)$ for all $x\in X$ it is a \emph{normed} (or \emph{Minkowski}) \emph{space}. The \emph{ball with radius $\lambda \in \RR_{+}$ and whose center} (not meant regarding shape) \emph{is determined at $x\in X$} is the set
\begin{equation*}
\mcg{\gamma}{x}{\lambda}=\setcond{y\in X}{\gamma(y-x)\leq \lambda}.
\end{equation*}
The respective \emph{sphere} is given by
\begin{equation*}
\msg{\gamma}{x}{\lambda}=\setcond{y\in X}{\gamma(y-x)=\lambda}.
\end{equation*}
If $\gamma$ is clear from the context, we omit it from the notation. If $\gamma(x)=1$, then $x$ is a \emph{unit vector}.
\end{definition}
\begin{example}Let $(X,\gamma)$ be a generalized Minkowski space. 
\begin{enumerate}[label={(\alph*)},leftmargin=0.3cm,align=left]
\item{The \emph{unit ball} $\mc{0}{1}$ is a compact, convex set having the origin as interior point. Conversely, if $B\subseteq X$ is a compact, convex set having the origin as interior point, then $\gamma_B:X\to\RR$, $\gamma_B(x)=\inf\setcond{\lambda\in \RR_+}{x\in \lambda B}$, defines a gauge on $X$.}
\item{The \emph{opposite gauge} $\tilde{\gamma}:X\to\RR$, $\tilde{\gamma}(x)=\gamma(-x)$, of $\gamma$ defines another gauge on $X$.}
\end{enumerate}
\end{example}

The concept of the dual space of a normed space is very important in classical functional analysis. The right extension of this notion for generalized Minkowski spaces $(X,\gamma)$ is given by the cone of linear and upper semi\-con\-tin\-u\-ous functions $\phi:X\to\RR$. Since we are concerned with finite-di\-men\-sional vector spaces, the topology generated by $\gamma$ satisfies the $T_1$ separation axiom \cite[Proposition~1.1.8]{Cobzas2013} and, by \cite[Theorem~9]{GarciaRaffi2005}, is the Euclidean topology. Thus all linear functionals $\phi:X\to\RR$ are continuous.
\begin{definition}
The \emph{dual space} of the vector space $X$ is the vector space $X^\ast$ of linear functionals $\phi: X\to \RR$. For $\phi \in X^\ast$ and $x\in X$, we shall write $\skpr{\phi}{x}$ for $\phi(x)$.
\end{definition}
The concept of the dual norm is replaced by the polar function.
\begin{definition}
The \emph{polar function} of a gauge $\gamma: X\to \RR_+$ is given by
\begin{equation*}
\gamma^\circ:X^\ast \to \RR_+,\,\gamma^\circ(\phi)\defeq \inf\setcond{\lambda\in\RR_{++}}{\skpr{\phi}{x}\leq \lambda \gamma (x) \fall x\in X}.
\end{equation*}
\end{definition}
From the definition it follows that gauges satisfy the Cauchy--Schwarz-like inequalities
\begin{equation}
-\gamma^\circ(-\phi)\gamma (x)\leq \skpr{\phi}{x}\leq \gamma^\circ(\phi) \gamma (x)\label{eq:cauchy-schwarz}
\end{equation} for all $\phi\in X^\ast$, $x\in X$. Other representations of the polar function are
\begin{align*}
\gamma^\circ(\phi)&=\sup\setcond{\frac{\skpr{\phi}{y}}{\gamma(y)}}{y\in X, y\neq 0}\\
&=\sup\setcond{\skpr{\phi}{y}}{y\in X,\gamma(y)=1}\\
&=\sup\setcond{\skpr{\phi}{y}}{y\in X,\gamma(y)\leq 1},
\end{align*}
see \cite[\textsection\,15]{Rockafellar1972} and \cite[Proposition~2.1.7]{Cobzas2013}. Note that $(X^\ast,\gamma^\circ)$ is a generalized Minkowski space. The polar gauge $\gamma^\circ$ can also be viewed as the support function of the unit ball of $\gamma$.

\begin{definition}
The \emph{support function} of a set $K\subseteq X$ is given by 
\begin{equation*}
h(\cdot,K):X^\ast \to \cRR,\, h(\phi,K)\defeq \sup\setcond{\skpr{\phi}{x}}{x\in K}.
\end{equation*}
The \emph{polar set} of $K$ is $K^\circ \defeq\setcond{\phi\in X^\ast}{h(\phi,K)\leq 1}$. 
\end{definition}

There is an intimate relationship between a gauge $\gamma:X\to\RR$ and its opposite $\tilde{\gamma}:X\to\RR$. When combined with polarity, we obtain the following formulas.
\begin{proposition}[see {\cite[Theorem~15.1]{Rockafellar1972}}]
Let $(X,\gamma)$ be a generalized Minkowski space. Then
\begin{enumerate}[label={(\alph*)},leftmargin=0.3cm,align=left]
\item{$\mcg{\tilde{\gamma}}{0}{1}=-\mcg{\gamma}{0}{1}$,}
\item{$\mcg{\gamma^\circ}{0}{1}=\mcg{\gamma}{0}{1}^\circ$,}
\item{$(\tilde{\gamma})^\circ = (\gamma^\circ)\tilde{\;}$,}
\item{$(-\mcg{\gamma}{0}{1})^\circ = -\mcg{\gamma}{0}{1}^\circ$.}
\end{enumerate}
\end{proposition}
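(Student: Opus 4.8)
The plan is to prove each of the four formulas in turn, relying on the representation of $\gamma^\circ$ as a support function and on standard facts about polarity of convex bodies.

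\medskip

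For part (a), I would argue directly from definitions: $y\in\mcg{\tilde\gamma}{0}{1}$ means $\tilde\gamma(y)\le 1$, i.e. $\gamma(-y)\le 1$, which says $-y\in\mcg{\gamma}{0}{1}$, hence $y\in-\mcg{\gamma}{0}{1}$. This is purely a rewriting of $\tilde\gamma(y)=\gamma(-y)$ and should be immediate.

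\medskip

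For part (b), the key observation already supplied in the excerpt is that $\gamma^\circ(\phi)=h(\phi,\mcg{\gamma}{0}{1})$, since $\gamma^\circ(\phi)=\sup\setcond{\skpr{\phi}{y}}{\gamma(y)\le 1}$ is exactly the support function of the unit ball. Then $\phi\in\mcg{\gamma^\circ}{0}{1}$ means $\gamma^\circ(\phi)\le 1$, i.e. $h(\phi,\mcg{\gamma}{0}{1})\le 1$, which is by definition the statement $\phi\in\mcg{\gamma}{0}{1}^\circ$. Thus the two sets coincide. The only thing to check carefully is that $\gamma^\circ$ is itself a gauge on $X^\ast$ so that $\mcg{\gamma^\circ}{0}{1}$ is a genuine ball, but this was already asserted in the text (that $(X^\ast,\gamma^\circ)$ is a generalized Minkowski space), so I may invoke it.

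\medskip

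For parts (c) and (d) I would combine the first two parts with the general polarity identity $(-K)^\circ=-K^\circ$ for a convex body $K$ containing the origin in its interior; this identity follows from $h(\phi,-K)=h(-\phi,K)$, so that $\phi\in(-K)^\circ\iff h(-\phi,K)\le 1\iff-\phi\in K^\circ\iff\phi\in-K^\circ$, which is exactly (d) applied to $K=\mcg{\gamma}{0}{1}$. For (c), I would compute the unit ball of each side and show they agree, since two gauges with the same unit ball are equal. Using (b) applied to $\tilde\gamma$ gives $\mcg{(\tilde\gamma)^\circ}{0}{1}=\mcg{\tilde\gamma}{0}{1}^\circ=(-\mcg{\gamma}{0}{1})^\circ$ by (a), which equals $-\mcg{\gamma}{0}{1}^\circ=-\mcg{\gamma^\circ}{0}{1}$ by (d) and (b); on the other hand the opposite of $\gamma^\circ$ has unit ball $\mcg{(\gamma^\circ)\tilde{\;}}{0}{1}=-\mcg{\gamma^\circ}{0}{1}$ by (a) applied to $\gamma^\circ$. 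Matching unit balls yields (c). The main obstacle, if any, is purely bookkeeping: keeping the direction of the minus signs straight and making sure that the argument-sign flip in the support function $h(\phi,-K)=h(-\phi,K)$ is applied consistently, rather than any genuine analytic difficulty.
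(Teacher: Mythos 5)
Your proof is correct. The paper itself gives no argument for this proposition --- it is stated with a bare citation to Rockafellar's Theorem~15.1 --- so there is nothing to compare against line by line; your derivation is a legitimate self-contained verification. The ingredients you use are exactly the ones the surrounding text makes available: the identity $\gamma^\circ(\phi)=\sup\setcond{\skpr{\phi}{y}}{\gamma(y)\leq 1}=h(\phi,\mcg{\gamma}{0}{1})$ for (b), the definition $\tilde{\gamma}(y)=\gamma(-y)$ for (a), the elementary computation $h(\phi,-K)=h(-\phi,K)$ for (d), and the fact that a gauge is determined by its unit ball (via the Minkowski functional, as in the paper's first example) to reduce (c) to a comparison of unit balls using (a), (b), and (d). The only point worth making explicit in a final write-up is the one you already flag: that $(\tilde{\gamma})^\circ$ and $(\gamma^\circ)\tilde{\;}$ are both gauges on $X^\ast$, so that equality of their unit balls does imply equality of the functionals.
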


The \emph{Hahn--Banach theorem} is a link between functional analysis and convex geometry. Its numerous appearances include norm-preserving extension of linear functions and separation of convex sets by hyperplanes. We give the version appropriate for generalized Minkowski spaces.
\begin{theorem}[{see \cite[Theorem~2.2.2]{Cobzas2013}}]\label{thm:hahn-banach}
Let $(X,\gamma)$ be a generalized Min\-kow\-ski space.
\begin{enumerate}[label={(\alph*)},leftmargin=0.3cm,align=left]
\item{If $Y$ is a subspace of $X$ and $\phi_0:Y\to \RR$ is a linear functional on the generalized Minkowski space $(Y,\restr{\gamma}{Y})$, then there exists a linear functional $\phi:X\to \RR$ such that $\restr{\phi}{Y}=\phi_0$ and $\gamma^\circ(\phi)=\gamma^\circ(\phi_0)$.}
\item{If $x \in X \setminus \setn{0}$, then there exists a linear functional $\phi:X\to\RR$ such that $\gamma^\circ(\phi)=1$ and $\skpr{\phi}{x}=\gamma(x)$.\label{existence_norming_func}}
\end{enumerate}
\end{theorem}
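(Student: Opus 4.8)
The plan is to reduce both statements to the classical analytic (majorization) form of the Hahn--Banach theorem, exploiting the fact that a gauge is nothing but a sublinear functional: conditions \ref{homogeneous} and \ref{subadditive} say precisely that $\gamma$ is positively homogeneous and subadditive, which is exactly the hypothesis under which the majorization theorem guarantees a linear extension dominated by $\gamma$ (up to a scalar). I would first establish (a), and then obtain (b) as the special case of extension from a one-dimensional subspace. No absolute values enter anywhere, which is what makes the asymmetric setting work.

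For (a), put $c\defeq\gamma^\circ(\phi_0)$, where the polar is computed in the generalized Minkowski space $(Y,\restr{\gamma}{Y})$. The right-hand Cauchy--Schwarz-like inequality \eqref{eq:cauchy-schwarz}, applied within $Y$, yields $\skpr{\phi_0}{y}\le c\,\gamma(y)$ for every $y\in Y$; that is, the sublinear functional $p\defeq c\,\gamma:X\to\RR$ majorizes $\phi_0$ on $Y$. The classical Hahn--Banach theorem then furnishes a linear extension $\phi:X\to\RR$ with $\restr{\phi}{Y}=\phi_0$ and $\skpr{\phi}{x}\le p(x)=c\,\gamma(x)$ for all $x\in X$. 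By the supremum representation of the polar function, this last bound is equivalent to $\gamma^\circ(\phi)\le c=\gamma^\circ(\phi_0)$. The reverse inequality is immediate: since $\phi$ agrees with $\phi_0$ on $Y$, the supremum $\sup_{y\neq 0}\skpr{\phi}{y}/\gamma(y)$ defining $\gamma^\circ(\phi)$ ranges over a set containing the one defining $\gamma^\circ(\phi_0)$, whence $\gamma^\circ(\phi)\ge\gamma^\circ(\phi_0)$. Combining the two gives $\gamma^\circ(\phi)=\gamma^\circ(\phi_0)$.

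For (b), let $x\in X\setminus\setn{0}$ and take the line $Y\defeq\lin\setn{x}$, on which I define the linear functional $\phi_0$ by $\skpr{\phi_0}{tx}\defeq t\,\gamma(x)$ for $t\in\RR$, so that $\skpr{\phi_0}{x}=\gamma(x)$. A short computation of the polar over $Y$ shows $\gamma^\circ(\phi_0)=1$: for $t>0$ the ratio $\skpr{\phi_0}{tx}/\gamma(tx)$ equals $1$, while for $t<0$ it equals $-\gamma(x)/\gamma(-x)\le 0$ (using $tx=\abs{t}(-x)$), so the supremum is $1$; here $\gamma(x),\gamma(-x)>0$ because $x\neq 0$ and by \ref{linefree}. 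Applying part (a) to this $\phi_0$ produces a linear extension $\phi:X\to\RR$ with $\skpr{\phi}{x}=\skpr{\phi_0}{x}=\gamma(x)$ and $\gamma^\circ(\phi)=\gamma^\circ(\phi_0)=1$, as required.

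The only point demanding care — and the place where the asymmetry of a gauge might be feared to cause trouble — is the interplay between the one-sided majorization $\skpr{\phi}{x}\le c\,\gamma(x)$ delivered by Hahn--Banach and the definition of $\gamma^\circ$. In the normed case one passes to $\abs{\skpr{\phi}{x}}\le c\,\gamma(x)$, but here no absolute value is available. This causes no difficulty, however, precisely because $\gamma^\circ$ is itself defined by a one-sided supremum $\sup_{y\neq 0}\skpr{\phi}{y}/\gamma(y)$; the asymmetric majorant and the asymmetric polar match exactly, so the classical theorem transfers to gauges without modification. The remaining verifications are the routine polar computations indicated above.
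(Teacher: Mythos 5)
Your proof is correct. Note that the paper does not prove this theorem at all --- it is quoted from Cobza\c{s} (Theorem~2.2.2 there) --- so there is no internal proof to compare against; your reduction to the classical dominated-extension (majorization) form of Hahn--Banach with sublinear majorant $p=c\,\gamma$, $c=\gamma^\circ(\phi_0)$, followed by the one-dimensional specialization for (b), is exactly the standard argument behind the cited result. Your closing observation is the right one: the asymmetric polar is defined by the same one-sided supremum that the majorization theorem controls, so no absolute values are needed. Two small points you glossed over, both harmless here: the inequality $\skpr{\phi_0}{y}\le c\,\gamma(y)$ at $\lambda=c$ exactly (not just for $\lambda>c$) holds because the admissible set of $\lambda$'s is closed, which is how the paper's inequality \eqref{eq:cauchy-schwarz} is meant; and finiteness of $c$ is automatic since $X$ is finite-dimensional, so every linear functional is bounded on the compact unit sphere of $\restr{\gamma}{Y}$.
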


The following result is another version of $(X^\ast)^\ast \cong X$ and $(\gamma^\circ)^\circ =\gamma$, see \cite[Korollar III.1.7]{Werner2011} for the special case of norms.
\begin{lemma}
In any generalized Minkowski space, we have 
\begin{equation*}
\gamma(x)=\max\setcond{\skpr{\phi}{x}}{\phi\in X^\ast, \gamma^\circ(\phi)\leq 1}
\end{equation*}
for all $x\in X$.
\end{lemma}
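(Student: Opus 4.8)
The plan is to establish the identity by combining the easy inequality with an attainment statement, which simultaneously upgrades the supremum to a maximum. First I would dispose of the case $x=0$: here $\gamma(0)=0$ by positive homogeneity (Definition~\ref{def-gauge}\ref{homogeneous} with $\lambda=0$), while $\skpr{\phi}{0}=0$ for every $\phi\in X^\ast$, so both sides vanish and the maximum is trivially attained, \zB at $\phi=0$.

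For $x\neq 0$ the argument splits into two halves. The inequality
\begin{equation*}
\gamma(x)\geq\sup\setcond{\skpr{\phi}{x}}{\phi\in X^\ast,\,\gamma^\circ(\phi)\leq 1}
\end{equation*}
is immediate from the Cauchy--Schwarz-like estimate \eqref{eq:cauchy-schwarz}: whenever $\gamma^\circ(\phi)\leq 1$, the right-hand inequality there yields $\skpr{\phi}{x}\leq\gamma^\circ(\phi)\gamma(x)\leq\gamma(x)$, so $\gamma(x)$ is an upper bound for the set over which the supremum is taken.

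The reverse inequality, together with the claim that the supremum is actually a maximum, is precisely the content of the generalized Hahn--Banach theorem. By Theorem~\ref{thm:hahn-banach}\ref{existence_norming_func}, for $x\neq 0$ there exists $\phi\in X^\ast$ with $\gamma^\circ(\phi)=1$ and $\skpr{\phi}{x}=\gamma(x)$. Such a $\phi$ is admissible (since $\gamma^\circ(\phi)=1\leq 1$) and realizes the value $\gamma(x)$; combined with the upper bound from the previous step, this shows that the supremum equals $\gamma(x)$ and is achieved, hence is a maximum.

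In truth there is no serious obstacle here, since every ingredient is already at hand. The only point deserving attention is the passage from supremum to maximum, and this is exactly what Theorem~\ref{thm:hahn-banach}\ref{existence_norming_func} delivers, by producing a norming functional that attains the value $\gamma(x)$ rather than merely approximating it. One could alternatively read the right-hand side as $(\gamma^\circ)^\circ(x)$ under the canonical identification $X\cong(X^\ast)^\ast$, so that the lemma restates $(\gamma^\circ)^\circ=\gamma$; but the direct route through \eqref{eq:cauchy-schwarz} and Hahn--Banach is cleaner and self-contained.
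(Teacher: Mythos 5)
Your proof is correct and follows essentially the same route as the paper's: the upper bound via the Cauchy--Schwarz-like inequality \eqref{eq:cauchy-schwarz} and the attainment via Theorem~\ref{thm:hahn-banach}\ref{existence_norming_func}. Your explicit treatment of the case $x=0$ is a small additional point of care that the paper leaves implicit.
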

\begin{proof} Fix $x\in X$.
Taking the supremum over $\phi\in X^\ast$ with $\gamma^\circ(\phi)\leq 1$ in \eqref{eq:cauchy-schwarz}, we obtain 
\begin{align*}
\gamma(x)&=\sup\setcond{\gamma^\circ(\phi)\gamma(x)}{\phi\in X^\ast, \gamma^\circ(\phi)\leq 1}\\
&\geq \sup\setcond{\skpr{\phi}{x}}{\phi\in X^\ast, \gamma^\circ(\phi)\leq 1}.
\end{align*}
By Theorem~\ref{thm:hahn-banach}\ref{existence_norming_func}, there exists a functional $\phi_0\in X^\ast$ such that $\gamma^\circ(\phi_0)=1$ and $\skpr{\phi_0}{x}=\gamma(x)$, \dah $\gamma(x)=\skpr{\phi_0}{x}\leq \sup\setcond{\skpr{\phi}{x}}{\phi\in X^\ast, \gamma^\circ(\phi)\leq 1}$.\qed
\end{proof}
We still need the notions of convex functions and subdifferentials.
\begin{definition}
\begin{enumerate}[label={(\alph*)},leftmargin=0.3cm,align=left]
\item{A function $f:X\to \cRR$ is \emph{convex} iff 
\begin{equation*}
f(\lambda x+ (1-\lambda) y) \leq \lambda f(x)+(1-\lambda) f(y)
\end{equation*} for all $x,y\in X$ and for all $\lambda\in [0,1]$.}
\item{The \emph{conjugate function} of $f:X\to\cRR$ is given by 
\begin{equation*}
f^\ast:X^\ast\to \cRR,\,f^\ast(\phi)=\sup\setcond{\skpr{\phi}{x}-f(x)}{x\in X}.
\end{equation*}
}
\item{The \emph{subdifferential} of a function $f:X\to\cRR$ at a point $x\in X$ with $f(x)\in\RR$ is the set $\p f(x)=\setcond{\phi\in X^\ast}{f(y)-f(x)\geq \skpr{\phi}{y-x}\fall y\in X}$. If $f(x)\notin \RR$, we set $\p f(x)\defeq \emptyset$.}
\item{A functional $\phi\in X^\ast$ is called a \emph{$\gamma$-norming functional} for $x\in X$ iff $\gamma^\circ(\phi)=1$ and $\skpr{\phi}{x}=\gamma(x)$.}
\end{enumerate}
\end{definition}
The existence of $\gamma$-norming functionals is provided by Theorem~\ref{thm:hahn-banach}\ref{existence_norming_func}. Plastria \cite{Plastria1992b} gives a subdifferential formula for gauges via $\gamma$-norming functionals. 
\begin{lemma}\label{lem:subdiff_norm}
Let $\gamma$ be a gauge on $X$. Then $\gamma$ is convex and
\begin{equation*}
\p \gamma (x)=\begin{cases}\setcond{\phi\in X^\ast}{\gamma^\circ(\phi)\leq 1},&x=0,\\\setcond{\phi\in X^\ast}{\gamma^\circ(\phi)=1, \skpr{\phi}{x}=\gamma(x)},&x\neq 0.\end{cases}
\end{equation*}
\end{lemma}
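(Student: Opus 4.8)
The plan is to verify convexity directly from the axioms and then to compute $\p\gamma(x)$ in the two cases $x=0$ and $x\neq 0$ separately, each reducing to a short manipulation of the subgradient inequality against the defining and supremum representations of $\gamma^\circ$. For convexity, subadditivity \ref{subadditive} followed by positive homogeneity \ref{homogeneous} gives, for $x,y\in X$ and $\lambda\in[0,1]$, $\gamma(\lambda x+(1-\lambda)y)\leq\gamma(\lambda x)+\gamma((1-\lambda)y)=\lambda\gamma(x)+(1-\lambda)\gamma(y)$. I would also record two facts for later use: $\gamma(0)=0$ by \ref{homogeneous} with $\lambda=0$, and $\gamma(x)>0$ whenever $x\neq 0$ by \ref{linefree} together with $\gamma\geq 0$.

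In the case $x=0$, since $\gamma(0)=0$ the inequality defining $\p\gamma(0)$ becomes $\gamma(y)\geq\skpr{\phi}{y}$ for all $y\in X$. This is exactly the condition $\gamma^\circ(\phi)\leq 1$: if it holds then $\lambda=1$ is admissible in the infimum defining $\gamma^\circ$, and conversely $\gamma^\circ(\phi)\leq 1$ combined with the representation $\gamma^\circ(\phi)=\sup\setcond{\skpr{\phi}{y}/\gamma(y)}{y\neq 0}$ returns $\skpr{\phi}{y}\leq\gamma(y)$ for all $y$. Hence $\p\gamma(0)=\setcond{\phi\in X^\ast}{\gamma^\circ(\phi)\leq 1}$.

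For $x\neq 0$ I would prove two inclusions. The inclusion $\supseteq$ is immediate: if $\gamma^\circ(\phi)=1$ and $\skpr{\phi}{x}=\gamma(x)$, then for any $y$ the right-hand inequality in \eqref{eq:cauchy-schwarz} yields $\skpr{\phi}{y-x}=\skpr{\phi}{y}-\gamma(x)\leq\gamma(y)-\gamma(x)$, so $\phi\in\p\gamma(x)$. For $\subseteq$, assume $\gamma(y)-\gamma(x)\geq\skpr{\phi}{y-x}$ for all $y$. Testing with $y=tx$, $t\geq 0$, gives $(t-1)\gamma(x)\geq(t-1)\skpr{\phi}{x}$; choosing $t>1$ and then $t<1$ yields $\skpr{\phi}{x}=\gamma(x)$. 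Using this, substituting $y=\lambda z$ with $\lambda>0$ and invoking positive homogeneity cancels $\skpr{\phi}{x}=\gamma(x)$ and leaves $\lambda\gamma(z)\geq\lambda\skpr{\phi}{z}$, whence $\skpr{\phi}{z}\leq\gamma(z)$ for every $z$, \dah $\gamma^\circ(\phi)\leq 1$.

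The one genuinely delicate point is upgrading $\gamma^\circ(\phi)\leq 1$ to the exact value $\gamma^\circ(\phi)=1$ in the case $x\neq 0$. This is where the recorded fact $\gamma(x)>0$ is essential: division by $\gamma(x)$ is legitimate, and the norming identity $\skpr{\phi}{x}=\gamma(x)$ feeds into the supremum representation to give $\gamma^\circ(\phi)\geq\skpr{\phi}{x}/\gamma(x)=1$. Together with the previous bound this forces $\gamma^\circ(\phi)=1$, so $\phi$ is a $\gamma$-norming functional. I expect no appeal to the Hahn--Banach Theorem~\ref{thm:hahn-banach} to be needed for the formula itself; it is only required to guarantee that the right-hand sets are nonempty.
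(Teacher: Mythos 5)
Your proof is correct. Note that the paper itself does not prove this lemma at all --- it simply cites Plastria for the subdifferential formula --- so there is no in-paper argument to compare against; you have supplied a complete, self-contained verification. Each step checks out: the convexity computation, the identification of $\p\gamma(0)$ with $\setcond{\phi\in X^\ast}{\gamma^\circ(\phi)\leq 1}$ via the supremum representation of the polar, the test vectors $y=tx$ to extract $\skpr{\phi}{x}=\gamma(x)$, and the use of $\gamma(x)>0$ to upgrade $\gamma^\circ(\phi)\leq 1$ to equality. Your closing remark is also accurate: Hahn--Banach is irrelevant to the formula itself and only guarantees nonemptiness of the sets on the right-hand side.
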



\section{Finitely Many Points in Generalized Minkowski Spaces}
\label{chap:points_minkowski}
The straightforward generalization of the famous Fermat--Torricelli problem for generalized Minkowski spaces $(X,\gamma)$ is the convex optimization problem
\begin{equation}
\inf_{x\in X}\sum_{i=1}^n \gamma(p_i-x), \label{eq:fermat-torricelli}
\end{equation}
where $P=\setn{p_1,\ldots,p_n}$ is a given set of $n\geq 1$ distinct points. The \emph{Fermat--Torricelli locus}, \dah the solution set of the Fermat--Torricelli problem for $P$, will be denoted by $\ft(P)$.

\subsection{Coincidences with the Norm Setting}
\label{chap:coincidences}
In this subsection we collect results that hold for gauges in the same way as for norms. In almost each case, the analogous statements for norms from \cite{MartiniSwWe2002} are cited in brackets.
\begin{proposition}\label{prop:ft_compact_convex}
The Fermat--Torricelli locus of any finite set is always non-empty, closed, bounded, and convex.
\end{proposition}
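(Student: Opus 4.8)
The plan is to study the objective function $f\colon X\to\RR$, $f(x)=\sum_{i=1}^n\gamma(p_i-x)$, and to show that it is convex, continuous, and coercive; the four asserted properties of $\ft(P)=\setcond{x\in X}{f(x)\le f(y)\fall y\in X}$ then follow from standard arguments. First I would record convexity: by Lemma~\ref{lem:subdiff_norm} the gauge $\gamma$ is convex, and since $x\mapsto p_i-x$ is affine, each summand $x\mapsto\gamma(p_i-x)$ is convex; hence $f$, being a finite sum of convex functions, is convex. Consequently, writing $m=\inf_{x\in X}f(x)$, the locus $\ft(P)=\setcond{x\in X}{f(x)\le m}$ is a sublevel set of a convex function and is therefore convex.

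Next I would establish the two linear estimates that drive everything else. Fix any Euclidean norm $\mnorm{\cdot}$ on $X$. Since $\mc{0}{1}$ is bounded, it lies in some ball $\setcond{y}{\mnorm{y}\le R}$, and since $0\in\inte\mc{0}{1}$, it contains some ball $\setcond{y}{\mnorm{y}\le r}$ with $r>0$. For $y\neq 0$ the vector $y/\gamma(y)$ lies on $\bd\mc{0}{1}$ (note $\gamma(y)>0$ by condition~\ref{linefree}), whence $\mnorm{y}\le R\,\gamma(y)$; scaling the inner ball gives $\gamma(y)\le\mnorm{y}/r$. Thus
\begin{equation*}
\tfrac{1}{R}\mnorm{y}\le\gamma(y)\le\tfrac{1}{r}\mnorm{y}\qquad\text{for all }y\in X.
\end{equation*}
The upper bound together with subadditivity yields $\gamma(x)-\gamma(y)\le\gamma(x-y)\le\mnorm{x-y}/r$ and, symmetrically, $\gamma(y)-\gamma(x)\le\mnorm{y-x}/r$, so $\gamma$ is Lipschitz and hence continuous; therefore $f$ is continuous as well. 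The lower bound yields coercivity, since $f(x)\ge\gamma(p_1-x)\ge\tfrac{1}{R}\mnorm{p_1-x}\ge\tfrac{1}{R}(\mnorm{x}-\mnorm{p_1})\to+\infty$ as $\mnorm{x}\to+\infty$.

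Finally I would combine these properties. Coercivity guarantees that for some radius $\rho$ one has $f(x)>f(0)$ whenever $\mnorm{x}>\rho$, so the infimum of $f$ equals its infimum over the compact ball $\setcond{x}{\mnorm{x}\le\rho}$; by continuity and the Weierstrass theorem this infimum is attained, so $m\in\RR$ and $\ft(P)\neq\emptyset$. Closedness of $\ft(P)=f^{-1}(\mathopen{]}-\infty,m\mathclose{]})$ follows from continuity of $f$, and boundedness follows from coercivity, as $\ft(P)\subseteq\setcond{x}{\mnorm{x}\le\rho}$. I expect the only real content to lie in the coercivity estimate, \dah in exploiting the compactness of the unit ball (and the interior-point property) to sandwich $\gamma$ between positive multiples of a Euclidean norm; once this is in place, the remaining assertions are the usual consequences for minimizers of a continuous, coercive, convex function, and no symmetry of $\gamma$ is needed anywhere.
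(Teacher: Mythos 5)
Your proof is correct and follows essentially the same route as the paper: both arguments reduce to showing that $f$ is convex, continuous, and has bounded sublevel sets, and then extract non-emptiness via compactness. The only difference is cosmetic --- the paper delegates the boundedness of sublevel sets and the equivalence of topologies to a citation, while you prove the sandwich estimate $\tfrac{1}{R}\mnorm{y}\le\gamma(y)\le\tfrac{1}{r}\mnorm{y}$ directly, which makes your version more self-contained but not substantively different.
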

\begin{proof}
Since $f:X\to\RR$, $f(x)= \sum_{i=1}^n \gamma(p_i-x)$, is bounded from below by $0$, there exists $\alpha\defeq \inf\setcond{f(x)}{x\in X}\in\RR_+$.  The set 
\begin{equation*}
A\defeq\setcond{x\in X}{f(x)\leq \alpha +1}
\end{equation*}
is bounded (with respect to the gauge $\gamma$ and, by \cite[Theorem~9]{GarciaRaffi2005}, with respect to the Euclidean norm). Sublevel sets of convex and lower semicontinuous functions are convex and closed. Hence, $f$ attains its minimum on $A$ (which is $\alpha$), and the solution set $\setcond{x\in X}{f(x)=\alpha}=\setcond{x\in X}{f(x)\leq \alpha}$ is therefore non-empty, closed, bounded, and convex.\qed
\end{proof}
\begin{theorem}[{see \cite[Theorem~3.1]{MartiniSwWe2002}}]\label{thm:fermat-torricelli}
Let $f=\sum_{i=1}^n \gamma(p_i-\cdot)$ be the Fermat--Torricelli objective function.
\begin{enumerate}[label={(\alph*)},leftmargin=0.3cm,align=left]
\item{If $\bar{x}\in X\setminus\setn{p_1,\ldots,p_n}$, then $\bar{x}$ is a minimum point of $f$ if and only if, for all $i\in\setn{1,\ldots,n}$, there exists a $\gamma$-norming functional $\phi_i\in X^\ast$ of $p_i-\bar{x}$ such that $\sum_{i=1}^n \phi_i=0$.\label{thm_part_a}}
\item{A point $p_j$ ($1\leq j\leq n$) is a minimum point of $f$ if and only if, for all $i\in\setn{1,\ldots,n}$, $i\neq j$, there exists a $\gamma$-norming functional $\phi_i$ of $p_i-p_j$ such that $\gamma^\circ\lr{-\sum_{i\neq j}\phi_i}\leq 1$.\label{thm_part_b}}
\end{enumerate}
\end{theorem}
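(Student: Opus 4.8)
The plan is to treat this as a convex optimality question and to read off the answer from the subdifferential formula in Lemma~\ref{lem:subdiff_norm}. Each summand $g_i\defeq\gamma(p_i-\cdot)$ is the composition of the convex function $\gamma$ with the affine map $x\mapsto p_i-x$, hence convex, and finite on all of $X$; therefore so is $f=\sum_{i=1}^n g_i$. Because the subdifferential is defined through the inequality $f(y)-f(\bar{x})\geq\skpr{\phi}{y-\bar{x}}$, the relation $0\in\p f(\bar{x})$ is \emph{literally} the statement $f(y)\geq f(\bar{x})$ for all $y$, \dah that $\bar{x}$ is a global minimum point. First I would rewrite $\p f(\bar{x})$ through the subdifferentials of $\gamma$. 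An elementary change of variables shows $\p g_i(\bar{x})=-\p\gamma(p_i-\bar{x})$ (the linear part of $x\mapsto p_i-x$ is multiplication by $-1$), and, since every $g_i$ is finite everywhere, the Moreau--Rockafellar sum rule gives $\p f(\bar{x})=\sum_{i=1}^n\p g_i(\bar{x})$. Combining these, $0\in\p f(\bar{x})$ holds if and only if there exist $\psi_i\in\p\gamma(p_i-\bar{x})$ with $\sum_{i=1}^n\psi_i=0$.

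For part~\ref{thm_part_a} the hypothesis $\bar{x}\notin\setn{p_1,\ldots,p_n}$ guarantees $p_i-\bar{x}\neq 0$ for every $i$, so the second case of Lemma~\ref{lem:subdiff_norm} identifies $\p\gamma(p_i-\bar{x})$ with the set of $\gamma$-norming functionals of $p_i-\bar{x}$. Writing $\psi_i=\phi_i$, the condition $\sum_i\psi_i=0$ is exactly $\sum_{i=1}^n\phi_i=0$, which is the claimed characterization.

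For part~\ref{thm_part_b} I apply the restatement with $\bar{x}=p_j$ and split the index set according to whether $i=j$. For $i\neq j$ the points are distinct, so $p_i-p_j\neq 0$, and as before the admissible $\psi_i$ are precisely the $\gamma$-norming functionals $\phi_i$ of $p_i-p_j$. For $i=j$ the argument $p_j-p_j=0$ falls under the first case of Lemma~\ref{lem:subdiff_norm}, giving $\psi_j\in\p\gamma(0)=\setcond{\phi\in X^\ast}{\gamma^\circ(\phi)\leq 1}$. The constraint $\sum_i\psi_i=0$ forces $\psi_j=-\sum_{i\neq j}\phi_i$, and the single remaining requirement $\gamma^\circ(\psi_j)\leq 1$ turns into $\gamma^\circ\lr{-\sum_{i\neq j}\phi_i}\leq 1$, as asserted.

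The step that needs the most care is the identity $\p f(\bar{x})=\sum_i\p g_i(\bar{x})$: in general only the inclusion $\supseteq$ is automatic, and equality requires a constraint qualification. Here it is harmless, because each $g_i$ is finite on all of $X$, so $\ri(\dom g_i)=X$ and the relative interiors of the domains have common points, which is exactly the Moreau--Rockafellar hypothesis. A self-contained alternative avoiding the sum rule would argue through one-sided directional derivatives: for finite convex $f$ one has $f'(\bar{x};v)=\sum_i g_i'(\bar{x};v)$, optimality of $\bar{x}$ is equivalent to $f'(\bar{x};v)\geq 0$ for all directions $v$, and writing each $g_i'(\bar{x};v)$ as the support function of $\p\gamma(p_i-\bar{x})$ evaluated at $-v$ (which is $\gamma(-v)$ when $p_i=\bar{x}$) reduces the inequality, via a separation argument in $X^\ast$, to the existence of the functionals $\phi_i$ above.
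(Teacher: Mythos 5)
Your proposal is correct and follows essentially the same route as the paper: Fermat's rule $0\in\p f(\bar{x})$, the Moreau--Rockafellar sum rule for the real-valued convex summands, and the subdifferential formula of Lemma~\ref{lem:subdiff_norm}, with the two cases of that lemma giving parts \ref{thm_part_a} and \ref{thm_part_b}. You are in fact somewhat more explicit than the paper about the sign flip $\p\,\gamma(p_i-\cdot)(\bar{x})=-\p\gamma(p_i-\bar{x})$ and the constraint qualification, which is a welcome addition; the paper's only extra content is a self-contained direct verification of the `$\Leftarrow$' directions via the Cauchy--Schwarz-like inequalities.
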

\begin{proof}
Using Lemma~\ref{lem:subdiff_norm}, the theorem is an immediate consequence of the fact that $\bar{x}$ is a minimum point of $f$ if and only if 
\begin{equation*}
0\in\p \lr{\sum_{i=1}^n \gamma(p_i-\cdot)}(\bar{x})= \sum_{i=1}^n \p \gamma(p_i-\cdot)(\bar{x}).
\end{equation*}
(Note that the functions $\gamma(p_i-\cdot)$ are real-valued and convex. Hence the subdifferential of the sum is the Minkowski sum of the subdifferentials, see \cite[Corollary~16.39]{BauschkeCo2011}.)

In both parts of the theorem, the `$\Leftarrow$' direction can also be shown directly in the same manner as for \cite[Theorem~2.1]{KupitzMaSp2013}.\\
\ref{thm_part_a} `$\Leftarrow$': Let $x \in X$. We have
\begin{align*}
f(x) &=\sum_{i=1}^n \gamma(p_i-x)\\
&\stackrel{\star}{\geq} \sum_{i=1}^n \skpr{\frac{\phi_i}{\gamma^\circ(\phi_i)}}{p_i-x}\\
&\stackrel{\star\star}{=} \sum_{i=1}^n \skpr{\phi_i}{p_i-x}\\
&=\sum_{i=1}^n \skpr{\phi_i}{p_i-\bar{x}+\bar{x}-x}\\
&=\sum_{i=1}^n \skpr{\phi_i}{p_i-\bar{x}}+\sum_{i=1}^n \skpr{\phi_i}{\bar{x}-x}\\
&\stackrel{\star\star}{=} \sum_{i=1}^n\gamma(p_i-\bar{x})+\skpr{\sum_{i=1}^n \phi_i}{\bar{x}-x}\\
&=f(\bar{x}).
\end{align*}

\ref{thm_part_b} `$\Leftarrow$': Let $x\in X$. We have
\begin{align*}
f(p_j)&=\sum_{i\neq j}\gamma(p_i-p_j)\\
&\stackrel{\star\star}{=} \sum_{i\neq j} \skpr{\phi_i}{p_i-p_j}\\
&=\sum_{i\neq j} \skpr{\phi_i}{p_i-x}+\sum_{i\neq j} \skpr{-\phi_i}{p_j-x}\\
&\stackrel{\star}{\leq} \sum_{i\neq j} \gamma^\circ(\phi_i)\gamma(p_i-x)+\gamma^\circ\lr{\sum_{i\neq j}-\phi_i}\gamma(p_j-x)\\
&\leq \sum_{i\neq j}\gamma(p_i-x)+ \gamma(p_j-x)\\
&=f(x).
\end{align*}
The relations $\stackrel{\star}{\leq}$ follow from the definition of the polar norm, which yields the Cauchy--Schwarz-like inequalities \eqref{eq:cauchy-schwarz}. Furthermore, the relations $\stackrel{\star\star}{=}$ hold true, since the functionals $\phi_i$ are assumed to be $\gamma$-norming functionals of $p_i-\bar{x}$.\qed
\end{proof}

\begin{definition}\label{def:exposedness_cones}
\begin{enumerate}[label={(\alph*)},leftmargin=0.3cm,align=left]
\item{An \emph{exposed face} of a closed convex set $K\subseteq X$ is the intersection of $K$ with one of its \emph{supporting hyperplanes} 
\begin{equation*}
\setcond{y\in X}{\skpr{\phi}{y}=h(\phi,K)},
\end{equation*}
whenever $\phi\in X^\ast$ obeys $h(\phi,K)<+\infty$. A point $x\in X$ is an \emph{exposed point} of $K$ iff $\setn{x}$ is an exposed face of $K$.\label{exposedness}}
\item{Given a unit functional $\phi \in X^\ast$ (\dah $\gamma^\circ(\phi)=1$) and a point $x\in X$, define the cone
\begin{equation*}
C(x,\phi)=x-\setcond{z\in X}{\skpr{\phi}{z}=\gamma(z)},
\end{equation*}
\dah $C(x,\phi)$ is the translate by $x$ of the rays from the origin through the exposed face $\phi^{-1}(-1)\cap (-\mcg{\gamma}{0}{1})$ of the unit ball $-\mcg{\gamma}{0}{1}$ of $(X,\tilde{\gamma})$.}
\end{enumerate}
\end{definition}
\begin{proposition}[{see \cite[Theorem~3.2]{MartiniSwWe2002}}]\label{prop:ft_locus_cone_intersection}
In any generalized Minkowski space $(X,\gamma)$ with finite given subset $P=\setn{p_1,\ldots,p_n}$, suppose that we are given $\bar{x}\in \ft(P)\setminus P$. Let $\phi_i$ be a $\gamma$-norming functional of $p_i-\bar{x}$ for each $i\in\setn{1,\ldots,n}$, such that $\sum_{i=1}^n \phi_i =0$. Then
\begin{equation*}
\ft(P)= \bigcap_{i=1}^n C(p_i,\phi_i).
\end{equation*}
\end{proposition}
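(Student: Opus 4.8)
The plan is to prove the asserted set equality by showing, for an \emph{arbitrary} $y\in X$, the equivalence $y\in\ft(P)\Leftrightarrow y\in\bigcap_{i=1}^n C(p_i,\phi_i)$; the whole argument rests on one chain of inequalities together with careful bookkeeping of when equality is attained. First I would unwind what cone membership means. By the definition of $C(p_i,\phi_i)$ in Definition~\ref{def:exposedness_cones}, a point $y$ lies in $C(p_i,\phi_i)$ exactly when $p_i-y$ belongs to $\setcond{z\in X}{\skpr{\phi_i}{z}=\gamma(z)}$, that is, when $\skpr{\phi_i}{p_i-y}=\gamma(p_i-y)$. Since each $\phi_i$ is a $\gamma$-norming functional, we have $\gamma^\circ(\phi_i)=1$, so the right-hand Cauchy--Schwarz-like inequality in \eqref{eq:cauchy-schwarz} yields $\skpr{\phi_i}{p_i-y}\leq\gamma(p_i-y)$ for \emph{every} $y$ and $i$. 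Thus membership in $C(p_i,\phi_i)$ is precisely the case of equality in this termwise estimate.

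Next, for arbitrary $y\in X$ I would run the estimate already used in the `$\Leftarrow$' part of Theorem~\ref{thm:fermat-torricelli}\ref{thm_part_a}: summing the termwise inequalities and splitting $p_i-y=(p_i-\bar{x})+(\bar{x}-y)$ gives
\[
f(y)=\sum_{i=1}^n\gamma(p_i-y)\geq\sum_{i=1}^n\skpr{\phi_i}{p_i-y}=\sum_{i=1}^n\skpr{\phi_i}{p_i-\bar{x}}+\skpr{\sum_{i=1}^n\phi_i}{\bar{x}-y}.
\]
The hypothesis $\sum_{i=1}^n\phi_i=0$ annihilates the last summand, while the norming property $\skpr{\phi_i}{p_i-\bar{x}}=\gamma(p_i-\bar{x})$ identifies the remaining sum with $f(\bar{x})$. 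Since $\bar{x}\in\ft(P)$, the value $f(\bar{x})$ is the global minimum, so I obtain $f(y)\geq f(\bar{x})$ for all $y$, and crucially the \emph{only} inequality invoked is the termwise Cauchy--Schwarz step.

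Finally comes the key observation. Because $f(\bar{x})$ is the minimum, a point $y$ lies in $\ft(P)$ if and only if $f(y)=f(\bar{x})$, which by the displayed chain holds if and only if $\sum_{i=1}^n\gamma(p_i-y)=\sum_{i=1}^n\skpr{\phi_i}{p_i-y}$. Since this is an equality between sums of nonnegative gaps $\gamma(p_i-y)-\skpr{\phi_i}{p_i-y}\geq 0$, it forces each gap to vanish, that is, $\skpr{\phi_i}{p_i-y}=\gamma(p_i-y)$ for every $i$, which is exactly $y\in\bigcap_{i=1}^n C(p_i,\phi_i)$. This establishes both inclusions simultaneously. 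I expect no serious obstacle here; the only points demanding care are the faithful translation of the cone definition into the termwise equality condition and the elementary but essential remark that a vanishing sum of nonnegative terms forces each term to vanish.
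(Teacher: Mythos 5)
Your proof is correct, and it is organized differently from the paper's. The paper proves the two inclusions separately: for $\bigcap_{i=1}^n C(p_i,\phi_i)\subseteq\ft(P)$ it distinguishes the cases $x\notin P$ and $x=p_j$ and appeals to parts \ref{thm_part_a} and \ref{thm_part_b} of Theorem~\ref{thm:fermat-torricelli} respectively (using $\gamma^\circ\lr{-\sum_{i\neq j}\phi_i}=\gamma^\circ(\phi_j)=1$ in the second case), while for $\ft(P)\subseteq\bigcap_{i=1}^n C(p_i,\phi_i)$ it runs exactly your equality-forcing computation. Your single chain $f(y)\geq\sum_{i=1}^n\skpr{\phi_i}{p_i-y}=f(\bar{x})$, valid for every $y\in X$, delivers both inclusions at once: since $\bar{x}$ is a minimizer, $y\in\ft(P)$ iff the one inequality is tight, iff every nonnegative gap $\gamma(p_i-y)-\skpr{\phi_i}{p_i-y}$ vanishes, iff $y\in\bigcap_{i=1}^n C(p_i,\phi_i)$. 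This eliminates both the case distinction on whether $y$ lies in $P$ and the invocation of the optimality criterion, so it is shorter and self-contained given the hypotheses; what the paper's route buys in exchange is an explicit display of how the cone description ties back to the subdifferential characterization of minimizers. All the individual steps you flag as needing care (the translation of cone membership into $\skpr{\phi_i}{p_i-y}=\gamma(p_i-y)$, the use of $\gamma^\circ(\phi_i)=1$ in the Cauchy--Schwarz-like inequality \eqref{eq:cauchy-schwarz}, and the vanishing of a sum of nonnegative terms) check out.
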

\begin{proof}
By definition
\begin{equation*}
x\in \bigcap_{i=1}^n C(p_i,\phi_i) \;\Longleftrightarrow\; \skpr{\phi_i}{p_i-x}=\gamma(p_i-x) \fall i\in\setn{1,\ldots,n}.
\end{equation*}
Thus, if $x\notin P$, we have that $x\in \bigcap_{i=1}^n C(p_i,\phi_i)$ if and only if, for each $i\in\setn{1,\ldots,n}$, $\phi_i$ is a $\gamma$-norming functional of $p_i-x$. This yields  $x\in\ft(P)$ by Theorem~\ref{thm:fermat-torricelli} and the assumption $\sum_{i=1}^n\phi_i =0$. On the other hand, if $x=p_j$ for some $j$, then $x\neq p_i$ for all $i\neq j$, and $x\in \bigcap_{i=1}^n C(p_i,\phi_i)$ implies that, for all $i\neq j$, $\phi_i$ is a $\gamma$-norming functional of $p_i-x$. This implies that $x\in \ft(P)$, by Theorem~\ref{thm:fermat-torricelli} and $\gamma^\circ\lr{-\sum_{i\neq j}\phi_i}=\gamma^\circ(\phi_j)=1$. Thus $\bigcap_{i=1}^n C(p_i,\phi_i)\subseteq \ft(P)$. Conversely, if $x\in \ft(P)$, then 
\begin{align*}
\sum_{i=1}^n \skpr{\phi_i}{p_i-x}&=\sum_{i=1}^n \skpr{\phi_i}{p_i-\bar{x}+\bar{x}-x}\\
&=\sum_{i=1}^n \skpr{\phi_i}{p_i-\bar{x}}+\sum_{i=1}^n \skpr{\phi_i}{\bar{x}-x}\\
&=\sum_{i=1}^n \gamma(p_i-\bar{x})\\
&=\sum_{i=1}^n \gamma(p_i-x),
\end{align*}
and hence each Cauchy--Schwarz inequality $\skpr{\phi_i}{p_i-x}\leq\gamma(p_i-x)$ must hold as an equality. In other words, $\phi_i$ is a $\gamma$-norming functional for $p_i-x$, \dah $x\in\bigcap_{i=1}^n C(p_i,\phi_i)$.\qed
\end{proof}
\begin{corollary}[{see \cite[Corollary~3.2]{MartiniSwWe2002}}]
If all exposed faces of the unit ball $\mc{0}{1}$ of a generalized Minkowski space $(X,\gamma)$ are polytopes, then the Fermat--Torricelli locus of every set $P=\setn{p_1,\ldots,p_n} \subseteq X$ is a convex polytope, that may have empty interior. In particular, this applies if $X$ is two-dimensional or if $\mc{0}{1}$ is a polytope.
\end{corollary}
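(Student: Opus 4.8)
The plan is to treat two cases according to whether the Fermat--Torricelli locus escapes the given set. First suppose $\ft(P)\setminus P\neq\emptyset$, and fix a point $\bar x$ in it. Since $\bar x$ is a minimum point of $f$ lying off $P$, Theorem~\ref{thm:fermat-torricelli}\ref{thm_part_a} supplies $\gamma$-norming functionals $\phi_i$ of $p_i-\bar x$ with $\sum_{i=1}^n\phi_i=0$, which is exactly the hypothesis needed to apply Proposition~\ref{prop:ft_locus_cone_intersection}. That proposition then yields the representation $\ft(P)=\bigcap_{i=1}^n C(p_i,\phi_i)$, reducing the problem to understanding the individual cones.

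Next I would unwind the definition of $C(p_i,\phi_i)$ in terms of exposed faces. Because $\gamma^\circ(\phi_i)=1$ is the support value $h(\phi_i,\mc{0}{1})$, the set $F_i\defeq\setcond{y\in\mc{0}{1}}{\skpr{\phi_i}{y}=1}$ is a genuine (nonempty, since $\phi_i$ is $\gamma$-norming for the nonzero vector $p_i-\bar x$) exposed face of the unit ball. A short homogeneity computation identifies $\setcond{z\in X}{\skpr{\phi_i}{z}=\gamma(z)}$ with the cone $\cone(F_i)$ of rays from the origin through $F_i$, so that $C(p_i,\phi_i)=p_i-\cone(F_i)$ in accordance with Definition~\ref{def:exposedness_cones}\ref{exposedness}. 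By hypothesis $F_i$ is a polytope.

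The main step, and the one I expect to carry the weight, is the passage from the polytopality of $F_i$ to the polyhedrality of the cone. Writing $F_i=\co\setn{v_1,\ldots,v_k}$ exhibits $\cone(F_i)$ as a finitely generated cone, hence a polyhedral cone by the Weyl--Minkowski theorem, so $C(p_i,\phi_i)$ is a translate of an intersection of finitely many closed halfspaces through the origin. Intersecting these finitely many polyhedra keeps the intersection polyhedral, and Proposition~\ref{prop:ft_compact_convex} guarantees that $\ft(P)$ is bounded; a bounded polyhedron is a polytope, possibly of dimension below $\dim X$, which is precisely the admitted empty interior.

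It remains to dispatch the degenerate case and the two corollaries. If instead $\ft(P)\subseteq P$, then by Proposition~\ref{prop:ft_compact_convex} the locus is a nonempty convex subset of the finite set $P$, forcing it to be a single point, a trivial polytope. For the special instances I would argue that the hypothesis holds automatically: every exposed face of a polytopal unit ball is a face of a polytope and thus a polytope, while in the plane any exposed face is the intersection of $\mc{0}{1}$ with a supporting line, necessarily a point or a segment. The only delicate point throughout is the correct identification of $C(p_i,\phi_i)$ with $p_i-\cone(F_i)$ and the resulting finite generation; everything afterwards is routine polyhedral bookkeeping.
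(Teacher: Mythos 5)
Your proposal is correct and follows essentially the route the paper intends: the corollary is stated without proof immediately after Proposition~\ref{prop:ft_locus_cone_intersection} precisely because it is meant to follow from the representation $\ft(P)=\bigcap_{i=1}^n C(p_i,\phi_i)$, each cone being polyhedral since the generating exposed face is a polytope, together with the boundedness from Proposition~\ref{prop:ft_compact_convex}. Your handling of the degenerate case $\ft(P)\subseteq P$ (a nonempty convex subset of a finite set is a singleton) and of the two special instances is also exactly what is needed.
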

\begin{proposition}[{see \cite[Proposition~3.1]{MartiniSwWe2002}}]
Let $\setn{p_0,\ldots,p_n}\subseteq X$, $n \geq 1$, and let $\lambda_1,\ldots,\lambda_n \in\RR_{++}$.
\begin{enumerate}[label={(\alph*)},leftmargin=0.3cm,align=left]
\item{If $p_0\in\ft(\setn{p_1,\ldots,p_n})$, then $p_0\in\ft(\setn{p_0,p_1,\ldots,p_n})$.}
\item{If $p_0\in\ft(\setn{p_1,\ldots,p_n})$, then
\begin{equation*}
p_0\in \ft(\setn{p_0+\lambda_1(p_1-p_0),\ldots,p_0+\lambda_n(p_n-p_0)}).
\end{equation*}
}
\end{enumerate}
\end{proposition}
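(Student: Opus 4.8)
The plan is to handle the two parts by different short routes: part~(a) directly from the definition of the Fermat--Torricelli locus, and part~(b) through the norming-functional criterion of Theorem~\ref{thm:fermat-torricelli}.

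For part~(a), I would split the new objective as $f(x)=\gamma(p_0-x)+\sum_{i=1}^n\gamma(p_i-x)$ and simply compare function values. By hypothesis $p_0$ minimizes $\sum_{i=1}^n\gamma(p_i-\cdot)$, while the extra summand $\gamma(p_0-\cdot)$ is nonnegative everywhere and equals $\gamma(0)=0$ at $x=p_0$. Hence $f(p_0)=\sum_{i=1}^n\gamma(p_i-p_0)\le\sum_{i=1}^n\gamma(p_i-x)\le f(x)$ for every $x\in X$, which gives $p_0\in\ft(\setn{p_0,p_1,\ldots,p_n})$. No deeper tool is needed; positive homogeneity enters only to ensure $\gamma(0)=0$.

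For part~(b), I would set $q_i\defeq p_0+\lambda_i(p_i-p_0)$, so that $q_i-p_0=\lambda_i(p_i-p_0)$ with $\lambda_i>0$. Since the given points are distinct, $p_0\notin\setn{p_1,\ldots,p_n}$, so Theorem~\ref{thm:fermat-torricelli}\ref{thm_part_a} applies to the hypothesis and furnishes $\gamma$-norming functionals $\phi_i$ of $p_i-p_0$ with $\sum_{i=1}^n\phi_i=0$. The crux is the observation that each $\phi_i$ is automatically a $\gamma$-norming functional of $q_i-p_0$ as well: the value $\gamma^\circ(\phi_i)=1$ is untouched by the scaling, and by positive homogeneity $\skpr{\phi_i}{q_i-p_0}=\lambda_i\skpr{\phi_i}{p_i-p_0}=\lambda_i\gamma(p_i-p_0)=\gamma(q_i-p_0)$. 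Because $\lambda_i>0$ and $p_i\neq p_0$ force $q_i\neq p_0$, the point $p_0$ again lies outside $\setn{q_1,\ldots,q_n}$, and the $\phi_i$ still sum to zero; applying the sufficiency direction of Theorem~\ref{thm:fermat-torricelli}\ref{thm_part_a} to the configuration $\setn{q_1,\ldots,q_n}$ then yields $p_0\in\ft(\setn{q_1,\ldots,q_n})$.

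I do not anticipate a real obstacle here: each part collapses to essentially a one-line verification once the appropriate characterization is invoked. The only point requiring minor attention is checking that $p_0$ differs from every data point in both configurations, so that part~\ref{thm_part_a} of Theorem~\ref{thm:fermat-torricelli} (rather than the data-point case treated in part~\ref{thm_part_b}) is the relevant criterion; this follows from $\lambda_i>0$ together with the standing convention that the prescribed points form a genuine set of distinct points.
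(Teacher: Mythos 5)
Your proof is correct. Part (a) coincides with the paper's argument: the added summand $\gamma(p_0-\cdot)$ is nonnegative and vanishes at $p_0$, so the same two-line chain of inequalities appears in the paper. For part (b), however, you take a genuinely different route. The paper stays elementary: after normalizing $p_0=0$ it invokes the scaling-invariance of the Fermat--Torricelli locus to reduce to the case $\lambda_i\leq 1$, so that each $q_i$ lies on the segment $[0,p_i]$ and positive homogeneity gives $\gamma(q_i)=\gamma(p_i)-\gamma(p_i-q_i)$; the conclusion then follows from the optimality of $0$ for the original configuration plus one application of the triangle inequality $\gamma(p_i-x)\leq\gamma(p_i-q_i)+\gamma(q_i-x)$. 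You instead route the argument through the first-order criterion of Theorem~\ref{thm:fermat-torricelli}\ref{thm_part_a}, observing that a $\gamma$-norming functional of $p_i-p_0$ is automatically one of $\lambda_i(p_i-p_0)$. Your version is more conceptual --- it makes visible that the optimality condition at $p_0$ depends only on the directions $p_i-p_0$ and not on the distances --- but it leans on the heavier subdifferential machinery and, as you yourself flag, on the hypothesis $p_0\notin\setn{p_1,\ldots,p_n}$ (hence $p_0\notin\setn{q_1,\ldots,q_n}$); if $p_0$ were permitted to coincide with some $p_j$ you would have to switch to part \ref{thm_part_b} of that theorem, whereas the paper's computation goes through unchanged. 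Both arguments share the same harmless imprecision that the $q_i$ need not be pairwise distinct, so that $\sum_{i=1}^n\gamma(q_i-\cdot)$ is really a multiset objective.
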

\begin{proof}
First, if $p_0$ minimizes
\begin{equation*}
x\mapsto \sum_{i=1}^n \gamma(p_i-x),
\end{equation*}
then $p_0$ minimizes also
\begin{equation*}
x\mapsto \sum_{i=0}^n \gamma(p_i-x),
\end{equation*}
since, for any $x\in X$, we have
\begin{equation*}
\sum_{i=0}^n \gamma(p_i-p_0)=\sum_{i=1}^n \gamma(p_i-p_0)\leq \sum_{i=1}^n \gamma(p_i-x)\leq \sum_{i=0}^n \gamma(p_i-x).
\end{equation*}
Second, suppose that $p_0\in\ft(\setn{p_1,\ldots,p_n})$, and let $q_i=p_0+\lambda_i(p_i-p_0)$ for each $i\in\setn{1,\ldots,n}$. Without loss of generality, assume that $p_0=0$. Evidently, $0\in\ft(\setn{p_1,\ldots,p_n})$ if and only if $0\in\ft(\setn{\lambda p_1,\ldots,\lambda p_n})$ for any $\lambda\in\RR_{++}$. In other words, the Fermat--Torricelli locus is compatible with scaling. Thus, we may assume that each $\lambda_i\leq 1$ by making the original configuration of the given points $p_i$ sufficiently large. Then, for $x\in X$,
\begin{align*}
\sum_{i=1}^n \gamma(q_i-0)&=\sum_{i=1}^n\lr{\gamma(p_i-0)-\gamma(p_i-q_i)}\\
&\leq \sum_{i=1}^n\lr{\gamma(p_i-x)-\gamma(p_i-q_i)}\\
&\leq \sum_{i=1}^n\gamma(q_i-x),
\end{align*}
\dah $0\in\ft(\setn{q_1,\ldots,q_n})$.\qed
\end{proof}

\begin{proposition}[{see \cite[Corollary~3.1]{MartiniSwWe2002}}]
Let $\setn{p_0,\ldots,p_n}\subseteq X$, $n \geq 2$. If $p_0\in \ft(\setn{p_1,\ldots,p_n})\setminus \setn{p_1,\ldots,p_n}$, then also $p_0\in\ft(\setn{p_0,p_1,\ldots,p_{n-1}})$.
\end{proposition}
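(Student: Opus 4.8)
The plan is to chain together the two halves of Theorem~\ref{thm:fermat-torricelli}: part~\ref{thm_part_a} to read off norming functionals from the hypothesis, and part~\ref{thm_part_b} to certify $p_0$ as an optimizer for the new configuration.

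First I would exploit the assumption $p_0\in\ft(\setn{p_1,\ldots,p_n})\setminus\setn{p_1,\ldots,p_n}$. Since $p_0$ is not one of the given points $p_1,\ldots,p_n$, Theorem~\ref{thm:fermat-torricelli}\ref{thm_part_a} applies and furnishes, for each $i\in\setn{1,\ldots,n}$, a $\gamma$-norming functional $\phi_i$ of $p_i-p_0$ with $\sum_{i=1}^n\phi_i=0$. This single relation is the whole engine of the argument.

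Next I would observe that in the modified set $\setn{p_0,p_1,\ldots,p_{n-1}}$ the candidate $p_0$ is now one of the given points, while $p_0\neq p_i$ for $i\in\setn{1,\ldots,n-1}$ (again because $p_0\notin\setn{p_1,\ldots,p_n}$). Hence the relevant optimality criterion is Theorem~\ref{thm:fermat-torricelli}\ref{thm_part_b} rather than part~\ref{thm_part_a}. I would supply $\phi_1,\ldots,\phi_{n-1}$ as the required $\gamma$-norming functionals of $p_i-p_0$ and verify the single remaining condition $\gamma^\circ\lr{-\sum_{i=1}^{n-1}\phi_i}\leq 1$. From $\sum_{i=1}^n\phi_i=0$ one has $-\sum_{i=1}^{n-1}\phi_i=\phi_n$, so that $\gamma^\circ\lr{-\sum_{i=1}^{n-1}\phi_i}=\gamma^\circ(\phi_n)=1$, the last equality holding because $\phi_n$ is $\gamma$-norming. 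Theorem~\ref{thm:fermat-torricelli}\ref{thm_part_b} then yields $p_0\in\ft(\setn{p_0,p_1,\ldots,p_{n-1}})$.

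The proof is essentially free of computation, and its only subtlety is bookkeeping. The point to get right is that passing from the set $\setn{p_1,\ldots,p_n}$ (where $p_0$ is an interior optimizer) to $\setn{p_0,p_1,\ldots,p_{n-1}}$ (where $p_0$ coincides with a given point) switches the governing condition from the ``vanishing sum of norming functionals'' form to the ``polar value at most one'' form, and that the dropped functional $\phi_n$ is exactly the quantity certifying the inequality in the latter. I expect no genuine obstacle beyond selecting the correct case of Theorem~\ref{thm:fermat-torricelli} and carrying out the one-line substitution $-\sum_{i=1}^{n-1}\phi_i=\phi_n$.
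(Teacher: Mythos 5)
Your proposal is correct and follows essentially the same route as the paper: apply Theorem~\ref{thm:fermat-torricelli}\ref{thm_part_a} to the original configuration to obtain norming functionals $\phi_1,\ldots,\phi_n$ with vanishing sum, then discard $\phi_n$ and use $\gamma^\circ\lr{-\sum_{i=1}^{n-1}\phi_i}=\gamma^\circ(\phi_n)=1$ to verify the hypothesis of Theorem~\ref{thm:fermat-torricelli}\ref{thm_part_b} for the new configuration. The paper's proof is exactly this chain of implications, so there is nothing to add.
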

\begin{proof}
Using Theorem~\ref{thm:fermat-torricelli}, we have
\begin{align*}
p_0\in \ft(\setn{p_1,\ldots,p_n})&\Longleftrightarrow \begin{cases}\fall i\in\setn{1,\ldots,n}\ex \phi_i\in X^\ast, \gamma^\circ(\phi_i)=1,\\\skpr{\phi_i}{p_i-p_0}=\gamma(p_i-p_0): \sum_{i=1}^n\phi_i =0\end{cases}\\
&\Longrightarrow \begin{cases}\fall i\in\setn{1,\ldots,n-1}\ex \phi_i\in X^\ast, \gamma^\circ(\phi_i)=1,\\\skpr{\phi_i}{p_i-p_0}=\gamma(p_i-p_0): \gamma^\circ\lr{-\sum_{i=1}^{n-1}\phi_i}= 1\end{cases}\\
&\Longrightarrow p_0\in\ft(\setn{p_0,p_1,\ldots,p_{n-1}}).
\end{align*}
\qed
\end{proof}

Next we generalize a theorem on strictly convex norms to gauges. As for norms, we say that a gauge $\gamma$ on $X$ is \emph{strictly convex} if the unit ball $\mcg{\gamma}{0}{1}$ is strictly convex or, equivalently, if no line segment of positive length is a subset of the sphere $\msg{\gamma}{0}{1}$.
\begin{proposition}[{see \cite[Theorem~3.3]{MartiniSwWe2002}}]\label{prop-strict_convex}
If $(X,\gamma)$ is a generalized Min\-kow\-ski space with strictly convex gauge, then $\ft(P)$ is a singleton for every non-collinear subset $P=\setn{p_1,\ldots,p_n}\subseteq X$.
\end{proposition}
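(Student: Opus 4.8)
The plan is to show that $\ft(P)$, which is already known to be non-empty, compact, and convex by Proposition~\ref{prop:ft_compact_convex}, cannot contain two distinct points. So I would argue by contradiction: suppose $x_0,x_1\in\ft(P)$ with $x_0\neq x_1$. By convexity of the Fermat--Torricelli locus, the whole segment $\clseg{x_0}{x_1}$ lies in $\ft(P)$, so the objective function $f=\sum_{i=1}^n\gamma(p_i-\cdot)$ attains its minimal value at every point of this segment; in particular, writing $m=\frac12(x_0+x_1)$ for the midpoint, I have $f(m)=\frac12\lr{f(x_0)+f(x_1)}$.

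Next I would exploit the convexity of the individual summands $g_i=\gamma(p_i-\cdot)$. Each satisfies $g_i(m)\leq\frac12\lr{g_i(x_0)+g_i(x_1)}$, and summing these inequalities recovers the equality $f(m)=\frac12\lr{f(x_0)+f(x_1)}$ just established. Hence every individual inequality must in fact be an equality, which, after using positive homogeneity to clear the factor $\frac12$, reads
\begin{equation*}
\gamma\lr{(p_i-x_0)+(p_i-x_1)}=\gamma(p_i-x_0)+\gamma(p_i-x_1)\quad\text{for each }i.
\end{equation*}
Thus the triangle inequality holds with equality for the pair $u_i=p_i-x_0$, $v_i=p_i-x_1$.

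The key step---and the one where strict convexity enters---is the equality case of the triangle inequality. For nonzero $u_i,v_i$ I would normalize $\hat u_i=u_i/\gamma(u_i)$ and $\hat v_i=v_i/\gamma(v_i)$ to unit vectors and observe that the proper convex combination $w_i=\frac{\gamma(u_i)}{\gamma(u_i)+\gamma(v_i)}\hat u_i+\frac{\gamma(v_i)}{\gamma(u_i)+\gamma(v_i)}\hat v_i=\frac{u_i+v_i}{\gamma(u_i)+\gamma(v_i)}$ satisfies $\gamma(w_i)=1$ by the equality above. Since a strictly convex gauge admits no segment of positive length on its sphere $\msg{\gamma}{0}{1}$, a point in the relative interior of a segment joining two distinct boundary points must lie in the interior of the ball; therefore $w_i\in\msg{\gamma}{0}{1}$ forces $\hat u_i=\hat v_i$, i.e. $u_i$ and $v_i$ are positive multiples of one another. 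I expect verifying this equality-case statement for asymmetric gauges to be the main technical point, but the usual convexity-of-the-restriction argument (the map $t\mapsto\gamma((1-t)\hat u_i+t\hat v_i)$ is convex, bounded above by $1$, and equal to $1$ at both endpoints and at an interior point, hence constant $1$) carries over verbatim, since it uses only positive homogeneity and convexity, not symmetry.

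Finally I would translate positive proportionality into collinearity. From $p_i-x_1=\lambda_i(p_i-x_0)$ with $\lambda_i\in\RR_{++}$, and noting $\lambda_i\neq 1$ (otherwise $x_0=x_1$), I obtain $p_i=\frac{1}{1-\lambda_i}x_1-\frac{\lambda_i}{1-\lambda_i}x_0\in\aff\setn{x_0,x_1}$; the degenerate cases $p_i=x_0$ or $p_i=x_1$ trivially also place $p_i$ on this line. Hence all of $p_1,\ldots,p_n$ lie on the line $\aff\setn{x_0,x_1}$, contradicting the non-collinearity of $P$. Therefore $\ft(P)$ is a singleton.
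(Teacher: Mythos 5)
Your proof is correct, but it takes a genuinely different route from the paper's. The paper argues dually: assuming two minimizers $x\neq y$ (which may be taken outside $P$ since $P$ is finite), it invokes the optimality condition of Theorem~\ref{thm:fermat-torricelli} to obtain $\gamma$-norming functionals $\phi_i$ of $p_i-x$ with $\sum_{i=1}^n\phi_i=0$, shows by a telescoping computation that each $\phi_i$ also norms $p_i-y$, and then picks one $p_i$ not collinear with $x$ and $y$: the two distinct unit vectors $\frac{p_i-x}{\gamma(p_i-x)}$ and $\frac{p_i-y}{\gamma(p_i-y)}$ then lie in a common exposed face of the unit ball, giving a forbidden segment on the sphere. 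Your argument is purely primal: equality in the convexity inequalities at the midpoint forces $\gamma\lr{(p_i-x_0)+(p_i-x_1)}=\gamma(p_i-x_0)+\gamma(p_i-x_1)$ for every $i$, and the equality case of the triangle inequality --- which is exactly Lemma~\ref{lem-triangle=}, stated a little later in the paper, and which you correctly re-derive for asymmetric gauges --- combined with strict convexity yields positive proportionality of $p_i-x_0$ and $p_i-x_1$, hence $p_i\in\aff\setn{x_0,x_1}$ for all $i$, contradicting non-collinearity. What your route buys: it avoids the dual machinery (norming functionals, Hahn--Banach) entirely, it does not need the reduction to minimizers outside $P$ because the degenerate indices with $p_i\in\setn{x_0,x_1}$ are absorbed trivially, and it concludes that \emph{all} the $p_i$ are collinear rather than exploiting a single non-collinear one. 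What the paper's route buys: it reuses the optimality condition that drives the whole section and connects directly to the exposed-face and cone description of $\ft(P)$ in Proposition~\ref{prop:ft_locus_cone_intersection}. Both arguments bottom out in the same geometric fact, namely that a strictly convex sphere contains no segment of positive length.
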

\begin{proof}
Suppose $x$, $y\in\ft(P)$, $x\neq y$. By convexity of $\ft(P)$ (Proposition~\ref{prop:ft_compact_convex}), we have $[x,y]\subseteq \ft(P)$. Since $P$ is finite, we may assume $x,y\notin P$. Thus there exist, by Theorem~\ref{thm:fermat-torricelli}, $\gamma$-norming functionals $\phi_i$ of $p_i-x$ for each $p_i\in P$ such that $\sum_{i=1}^n\phi_i=0$. We have
\begin{align*}
\sum_{i=1}^n \gamma(p_i-x)&=\sum_{i=1}^n \skpr{\phi_i}{p_i-x}\\
&=\sum_{i=1}^n \skpr{\phi_i}{p_i-y}+\sum_{i=1}^n\skpr{\phi_i}{y-x}\\
&\leq\sum_{i=1}^n \gamma(p_i-y)\\
&=\sum_{i=1}^n\gamma(p_i-x).
\end{align*}
It follows that $\skpr{\phi_i}{p_i-y}=\gamma(p_i-y)$ for each $p_i\in P$ or, in other words, that $\phi_i$ is also a $\gamma$-norming functional of $p_i-y$. Since $P$ is non-collinear, there is $p_i\in P$ such that $x$, $y$, and $p_i$ are not collinear. Hence, $\frac{p_i-x}{\gamma(p_i-x)}$ and $\frac{p_i-y}{\gamma(p_i-y)}$ are different unit vectors with a common $\gamma$-norming functional, \dah $\clseg{\frac{p_i-x}{\gamma(p_i-x)}}{\frac{p_i-y}{\gamma(p_i-y)}}$ is a segment on the unit sphere, contradiction.\qed
\end{proof}

\begin{lemma}[{see \cite[Lemma~4.1]{MartiniSwWe2002}}]
Let $P=\setn{p_1,\ldots,p_n}$ be a finite subset of a generalized Minkowski space $(X,\gamma)$ such that $p_n\in\ft(P)$. Then $p_n$ is an exposed point of the polytope $\co(P\cap \ft(P))$, and
\begin{equation*}
\setcond{\frac{p_n-q}{\gamma(p_n-q)}}{q \in \ft(P)\cap P, q\neq p_n}
\end{equation*}  
is contained in an exposed face of the unit ball.
\end{lemma}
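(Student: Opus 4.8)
The plan is to manufacture a single $\gamma$-norming functional that norms every difference $p_n-q$ with $q\in P\cap\ft(P)$, $q\neq p_n$; once this functional is in hand, both assertions of the lemma fall out at once.

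I would first dispose of the degenerate case $\ft(P)=\setn{p_n}$, in which $P\cap\ft(P)=\setn{p_n}$, the displayed set is empty, and $p_n$ is trivially an exposed point of $\co\setn{p_n}=\setn{p_n}$. So assume $\ft(P)$ contains a further point. Since $\ft(P)$ is convex (Proposition~\ref{prop:ft_compact_convex}) it is then infinite, while $P$ is finite, so I may fix some $\bar{x}\in\ft(P)\setminus P$. As $\bar x$ is a minimizer that coincides with none of the $p_i$, Theorem~\ref{thm:fermat-torricelli}\ref{thm_part_a} supplies $\gamma$-norming functionals $\phi_1,\ldots,\phi_n$ of $p_1-\bar x,\ldots,p_n-\bar x$ with $\sum_{i=1}^n\phi_i=0$; in particular $\gamma^\circ(\phi_n)=1$. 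The functional $\phi_n$ will be the desired one.

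The crucial step --- and the one I expect to carry the whole argument --- is to upgrade $\phi_n$ from a norming functional of $p_n-\bar x$ to a norming functional of $p_n-q$ for every $q\in\ft(P)$. Here I invoke Proposition~\ref{prop:ft_locus_cone_intersection}: the hypotheses $\bar x\in\ft(P)\setminus P$ and $\sum_i\phi_i=0$ give $\ft(P)=\bigcap_{i=1}^n C(p_i,\phi_i)$. Unwinding the membership $q\in C(p_n,\phi_n)$ through Definition~\ref{def:exposedness_cones} yields exactly $\skpr{\phi_n}{p_n-q}=\gamma(p_n-q)$. Thus, for each $q\in\ft(P)$, the functional $\phi_n$ (with $\gamma^\circ(\phi_n)=1$) norms $p_n-q$. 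Everything after this identity is bookkeeping.

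For the second assertion, each vector $\frac{p_n-q}{\gamma(p_n-q)}$ with $q\in P\cap\ft(P)$, $q\neq p_n$, is a unit vector satisfying $\skpr{\phi_n}{\frac{p_n-q}{\gamma(p_n-q)}}=1=\gamma^\circ(\phi_n)=h(\phi_n,\mcg{\gamma}{0}{1})$, so all of them lie in the single exposed face $\setcond{y\in\mcg{\gamma}{0}{1}}{\skpr{\phi_n}{y}=h(\phi_n,\mcg{\gamma}{0}{1})}$ of the unit ball. For the first assertion, $q\neq p_n$ forces $\gamma(p_n-q)>0$, whence $\skpr{\phi_n}{p_n}>\skpr{\phi_n}{q}$ for every such $q$; so $p_n$ strictly maximizes $\phi_n$ over the finite set $P\cap\ft(P)$ and hence over $\co(P\cap\ft(P))$. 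Consequently the supporting hyperplane $\setcond{y\in X}{\skpr{\phi_n}{y}=\skpr{\phi_n}{p_n}}$ meets the polytope only in $p_n$, \dah $\setn{p_n}$ is an exposed face and $p_n$ an exposed point, as claimed.
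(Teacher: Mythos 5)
Your proof is correct, but it takes a genuinely different route from the paper's. The paper works directly at $p_n$: it invokes Theorem~\ref{thm:fermat-torricelli}\ref{thm_part_b} to obtain norming functionals $\phi_i$ of $p_i-p_n$ ($i<n$) with $\gamma^\circ\lr{-\sum_{i<n}\phi_i}\leq 1$, and then, for each $q\in P\cap\ft(P)$ with $q\neq p_n$, runs a sandwich computation ($f(q)=f(p_n)\leq\cdots\leq f(q)$) that forces equality in the Cauchy--Schwarz steps and shows that the single functional $\phi=-\sum_{i<n}\phi_i$ norms every $p_n-q$. You instead pass to an auxiliary point $\bar{x}\in\ft(P)\setminus P$ --- which requires your preliminary case split on whether $\ft(P)$ is a singleton, an observation the paper never needs --- apply Theorem~\ref{thm:fermat-torricelli}\ref{thm_part_a} there, and let Proposition~\ref{prop:ft_locus_cone_intersection} do the equality-forcing for you: membership of $q\in\ft(P)$ in $C(p_n,\phi_n)$ is literally the statement that $\phi_n$ norms $p_n-q$. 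The two approaches produce (possibly different) single norming functionals, and the endgame --- placing the normalized differences in one exposed face of the ball and strictly separating $p_n$ from the other points of $P\cap\ft(P)$ --- is identical. Your version is more modular, since the hard computation is outsourced to an already-proved proposition; the paper's is more self-contained and avoids the degenerate-case discussion. Both arguments are complete.
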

\begin{proof}
By Theorem~\ref{thm:fermat-torricelli}, there exist $\gamma$-norming functionals $\phi_i$ of $p_i-p_n$ for each $i\in\setn{1,\ldots,n-1}$ such that $\gamma^\circ\lr{-\sum_{i=1}^{n-1}\phi_i}\leq 1$. Then, for any $q\in  P\cap\ft(P)$ with $q\neq p_n$, we have
\begin{align*}
\sum_{i=1}^n \gamma(p_i-q)&=\sum_{i=1}^{n-1} \gamma(p_i-p_n)\\
&=\sum_{i=1}^{n-1}\skpr{\phi_i}{p_i-p_n}\\
&=\sum_{i=1}^{n-1}\skpr{\phi_i}{p_i-q}-\sum_{i=1}^{n-1}\skpr{\phi_i}{p_n-q}\\
&\leq \sum_{i=1}^{n-1} \gamma^\circ(\phi_i)\gamma(p_i-q)+ \gamma^\circ\lr{-\sum_{i=1}^{n-1} \phi_i}\gamma(p_n-q)\\
&\leq \sum_{i=1}^{n-1}\gamma(p_i-q)+ \gamma(p_n-q)\\
&=\sum_{i=1}^n \gamma(p_i-q).
\end{align*}
It follows, that $\phi\defeq -\sum_{i=1}^{n-1}\phi_i$ is a $\gamma$-norming functional of $p_n-q$. In other words, $\frac{p_n-q}{\gamma(p_n-q)}\in \phi^{-1}(1)\cap \mc{0}{1}$, which is an exposed face of the unit ball $\mc{0}{1}$. Furthermore, $\skpr{\phi}{p_n}=\gamma(p_n-q)+\skpr{\phi}{q}$ for all $q\in  P\cap\ft(P)$, $q\neq p_n$, \dah $\phi$ strictly separates $\setn{p_n}$ and $(P\cap\ft(P))\setminus\setn{p_n}$. Thus $p_n$ is a exposed point of $\co(P\cap \ft(P))$.\qed
\end{proof}
\begin{lemma}[{see \cite[Corollary~3.3]{MartiniSwWe2002}}]
Let a finite subset $P$ of a generalized Minkowski space $(X,\gamma)$ be split into disjoint non-empty sets $P_1,\ldots,P_m$ such that $\bigcap_{j=1}^m \ft(P_j) \neq \emptyset$. Then $\ft(P)=\bigcap_{j=1}^m \ft(P_j)$.
\end{lemma}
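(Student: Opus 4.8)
The plan is to exploit the additivity of the objective function under the partition. For each $j\in\setn{1,\ldots,m}$ I would write $f_j\defeq\sum_{p\in P_j}\gamma(p-\cdot)$ for the Fermat--Torricelli objective of $P_j$ and set $\alpha_j\defeq\inf_{x\in X}f_j(x)$; by Proposition~\ref{prop:ft_compact_convex} this infimum is attained and $\ft(P_j)=\setcond{x\in X}{f_j(x)=\alpha_j}$. Since $P_1,\ldots,P_m$ partition $P$ into disjoint pieces, the full objective decomposes as $f=\sum_{j=1}^m f_j$, which is the only structural fact the argument really needs.

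The first key step is the pointwise lower bound $f(x)=\sum_{j=1}^m f_j(x)\geq\sum_{j=1}^m\alpha_j$, valid for every $x\in X$, together with the observation that equality holds exactly when $f_j(x)=\alpha_j$ for all $j$, \dah when $x\in\bigcap_{j=1}^m\ft(P_j)$. The second step is to invoke the hypothesis: choosing any $\bar{x}\in\bigcap_{j=1}^m\ft(P_j)$ gives $f(\bar{x})=\sum_{j=1}^m\alpha_j$, so the lower bound is actually attained, and hence $\inf_{x\in X}f=\sum_{j=1}^m\alpha_j$.

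Combining the two steps finishes the proof. A point $x$ lies in $\ft(P)$ if and only if $f(x)=\sum_{j=1}^m\alpha_j$, which by the equality case above is equivalent to $x\in\bigcap_{j=1}^m\ft(P_j)$. Concretely, $f(x)-\sum_{j=1}^m\alpha_j=\sum_{j=1}^m\lr{f_j(x)-\alpha_j}$ is a sum of nonnegative terms, so it vanishes if and only if each term vanishes.

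The argument is short and elementary, and there is no genuine obstacle beyond recognizing the right decomposition. The one point that deserves care is the role of the nonemptiness assumption $\bigcap_{j=1}^m\ft(P_j)\neq\emptyset$: it is exactly what guarantees that the global minimum of $f$ equals $\sum_{j=1}^m\alpha_j$ rather than strictly exceeding it, and without it the stated identity would fail (the right-hand side would be empty while $\ft(P)$ is not). This is precisely why the result is formulated with that hypothesis rather than as an unconditional equality.
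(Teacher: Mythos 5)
Your argument is correct and is essentially identical to the paper's: the paper also reduces the claim to the elementary fact that if $f_1,\ldots,f_m$ have a common minimum point, then the minimizers of $\sum_j f_j$ are exactly the common minimizers, proved via the same pointwise inequality $\sum_j f_j(x)\geq\sum_j f_j(\bar{x})$ with equality iff each term is minimal. Your closing remark on the role of the nonemptiness hypothesis is a nice touch but adds nothing beyond what the paper's proof already implies.
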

The last claim is an immediate consequence of the following simple fact.
\begin{lemma}
Let $f_1,\ldots,f_m: S \to \RR$, $m \geq 1$, be functions on the same set $S$. If there exists a common minimum point $\bar{x}$ of $f_1,\ldots,f_m$, then $x \in S$ is a minimum point of $f=\sum_{j=1}^m f_j$ if and only if $x$ is a common minimum point of $f_1,\ldots,f_m$.
\end{lemma}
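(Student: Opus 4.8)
The plan is to prove the two implications separately, with the existence of the common minimizer $\bar{x}$ entering only in the forward (``only if'') direction.

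For the ``if'' direction, suppose $x$ is a common minimum point of $f_1,\ldots,f_m$. Then $f_j(x)\leq f_j(y)$ for every $j\in\setn{1,\ldots,m}$ and every $y\in S$. Summing over $j$ yields $f(x)\leq f(y)$ for all $y\in S$, so $x$ minimizes $f$. This direction does not use $\bar{x}$ at all.

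For the ``only if'' direction, suppose $x$ minimizes $f$. First I would sandwich the value $f(x)$ between two inequalities. Since $\bar{x}$ minimizes each $f_j$, we have $f(\bar{x})=\sum_{j=1}^m f_j(\bar{x})\leq \sum_{j=1}^m f_j(x)=f(x)$; and since $x$ minimizes $f$, we have $f(x)\leq f(\bar{x})$. Hence $f(x)=f(\bar{x})$, that is, $\sum_{j=1}^m f_j(x)=\sum_{j=1}^m f_j(\bar{x})$. The key step is then to rewrite this as $\sum_{j=1}^m\lr{f_j(x)-f_j(\bar{x})}=0$, where each summand is nonnegative because $\bar{x}$ minimizes $f_j$. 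A sum of nonnegative reals that vanishes forces each term to vanish, so $f_j(x)=f_j(\bar{x})$ for every $j$; since $f_j(\bar{x})$ is the minimum value of $f_j$, this says $x$ minimizes each $f_j$, \dah $x$ is a common minimum point.

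There is no genuine obstacle here; the statement is an order-theoretic triviality requiring nothing beyond the order structure of $\RR$, with no appeal to convexity, continuity, or any structure on $S$. The only point worth flagging is that the hypothesis of a \emph{common} minimizer is essential for the forward direction, since in its absence a minimizer of the sum $f$ need not minimize the individual summands $f_j$.
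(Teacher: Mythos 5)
Your proof is correct and follows essentially the same route as the paper's: the paper also observes that $f(x)=\sum_{j=1}^m f_j(x)\geq\sum_{j=1}^m f_j(\bar{x})=f(\bar{x})$ with equality if and only if $f_j(x)=f_j(\bar{x})$ for every $j$, which is exactly your sandwich plus vanishing-sum-of-nonnegatives argument. Your version merely spells out the two implications separately; there is no substantive difference.
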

\begin{proof}
For all $x \in S$,
\begin{equation*}
f(x)=\sum_{j=1}^m f_j(x) \geq \sum_{j=1}^m f_j(\bar{x}) =f(\bar{x})
\end{equation*}
with equality if and only if $f_j(x)=f_j(\bar{x})$, $j \in \setn{1,\ldots,m}$. This yields the claim.\qed
\end{proof}
 
\subsection{Differences from the Norm Setting and the Role of Metrically Defined Segments}
\label{chap:differences}
Menger \cite{Menger1928} considers a kind of betweenness relation in metric spaces which forms the basis for the more modern notion of $d$-segments in normed spaces, see \cite[Chapter II]{BoltyanskiMaSo1997}. In a normed space $(X,\mnorm{\cdot})$, the \emph{$d$-segment} between two points $x,y \in X$ is defined as
\begin{equation*}
[x,y]_d=\setcond{z\in X}{\mnorm{x-z}+\mnorm{z-y}=\mnorm{x-y}}.
\end{equation*}
Clearly, $[x,y]_d=[y,x]_d$. The triangle inequality shows that $\ft(\setn{x,y})=[x,y]_d$ (see \cite[p.~290]{MartiniSwWe2002}), and there are several other connections of the Fermat--Torricelli problem with the notion of $d$-segments (see \cite[Proposition~3.3, Corollaries~3.3~and~3.5, Theorem~4.1]{MartiniSwWe2002}).

When working with generalized Minkowski spaces $(X,\gamma)$, we give the analogous definition
\begin{equation*}
[x,y]_\gamma=\setcond{z\in X}{\gamma(x-z)+\gamma(z-y)=\gamma(x-y)},
\end{equation*}
but now we cannot expect $[x,y]_\gamma=[y,x]_\gamma$ in general. 

\begin{lemma}\label{lem-segments}
Let $x,y$ be two points of a generalized Minkowski space $(X,\gamma)$. 
\begin{enumerate}[label={(\alph*)},leftmargin=0.3cm,align=left]
\item{$[x,y]_\gamma$ is closed and convex.\label{dseg_closed_convex}}
\item{$[x,y] \subseteq [x,y]_\gamma$.\label{seg_in_dseg}}
\item{$[x,y]_\gamma=\bigcup_{0\leq \lambda \leq 1}\big(x-\ms{0}{\lambda\gamma(x-y)}\big)\cap \big(y+\ms{0}{(1-\lambda)\gamma(x-y)}\big)$.\label{dseg_construction}}
\end{enumerate}
\end{lemma}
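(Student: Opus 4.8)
The plan is to prove the three parts essentially independently, since each one is a direct unpacking of the definition of the generalized $d$-segment combined with the triangle inequality (property \ref{subadditive}). For part \ref{dseg_closed_convex}, I would observe that the map $z \mapsto \gamma(x-z)+\gamma(z-y)$ is convex and continuous: $\gamma$ is convex by Lemma~\ref{lem:subdiff_norm}, and continuity follows because $\gamma$ is finite and convex on the finite-dimensional space $X$, hence continuous in the Euclidean topology (which coincides with the topology generated by $\gamma$, as recalled in the excerpt). The set $[x,y]_\gamma$ is the preimage of the single value $\gamma(x-y)$ under this continuous convex function. By the triangle inequality, $\gamma(x-z)+\gamma(z-y) \geq \gamma(x-y)$ for every $z$, so $[x,y]_\gamma$ is exactly the sublevel set $\setcond{z}{\gamma(x-z)+\gamma(z-y) \leq \gamma(x-y)}$; this is closed (as a sublevel set of a continuous function) and convex (as a sublevel set of a convex function).

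For part \ref{seg_in_dseg}, I would take $z=x+\lambda(y-x)=(1-\lambda)x+\lambda y$ with $\lambda\in[0,1]$ and compute directly using positive homogeneity \ref{homogeneous}. Here $x-z=\lambda(x-y)$ and $z-y=(1-\lambda)(x-y)$, both of which are nonnegative scalar multiples of $x-y$; hence $\gamma(x-z)=\lambda\gamma(x-y)$ and $\gamma(z-y)=(1-\lambda)\gamma(x-y)$, so their sum is $\gamma(x-y)$, placing $z$ in $[x,y]_\gamma$. Note this step is where the lack of symmetry is harmless, because both displacement vectors point in the same direction $x-y$; no use of $\gamma(-w)=\gamma(w)$ is needed.

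For part \ref{dseg_construction}, I would argue by double inclusion. A point $z$ lies in the union on the right-hand side precisely when there exists $\lambda\in[0,1]$ with $\gamma(x-z)=\lambda\gamma(x-y)$ and $\gamma(z-y)=(1-\lambda)\gamma(x-y)$; summing these gives $\gamma(x-z)+\gamma(z-y)=\gamma(x-y)$, so the right-hand set is contained in $[x,y]_\gamma$. Conversely, given $z\in[x,y]_\gamma$, the triangle inequality forces both $\gamma(x-z)\leq\gamma(x-y)$ and $\gamma(z-y)\leq\gamma(x-y)$; I would then set $\lambda\defeq\gamma(x-z)/\gamma(x-y)$ (treating the degenerate case $x=y$ separately, where every set collapses to $\setn{x}$), check $\lambda\in[0,1]$, and verify from the defining equation that $\gamma(z-y)=(1-\lambda)\gamma(x-y)$, which exhibits $z$ in the $\lambda$-slice of the union.

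I expect the only genuine subtlety to be bookkeeping rather than a deep obstacle: one must be careful that the two spheres appearing in \ref{dseg_construction} are centered via the \emph{asymmetric} gauge in the correct orientation, so that $x-\ms{0}{\lambda\gamma(x-y)}$ really describes the points at gauge-distance $\lambda\gamma(x-y)$ \emph{into} $x$ (using $\gamma(x-z)$, not $\gamma(z-x)$) and likewise $y+\ms{0}{(1-\lambda)\gamma(x-y)}$ uses $\gamma(z-y)$. Keeping the arguments of $\gamma$ in the order dictated by the definition of $[x,y]_\gamma$ throughout avoids any spurious appeal to symmetry, and the rest is routine.
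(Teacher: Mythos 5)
Your proposal is correct and follows essentially the same route as the paper: part \ref{dseg_construction} is the identical $\lambda$-parametrization, and for \ref{dseg_closed_convex} the paper likewise derives convexity from positive homogeneity plus the triangle inequality (your sublevel-set phrasing just makes that explicit) and closedness from continuity of $\gamma$. The only cosmetic difference is in \ref{seg_in_dseg}, where the paper checks that the endpoints $x,y$ lie in $[x,y]_\gamma$ and then invokes the convexity from \ref{dseg_closed_convex}, whereas you verify membership of every point of $[x,y]$ by direct computation; both are fine.
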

\begin{proof}
\ref{dseg_closed_convex} Convexity of $[x,y]_\gamma$ is a consequence of positive homogeneity and of the triangle inequality. And $[x,y]_\gamma$ is closed, because $\gamma$ is continuous.

\ref{seg_in_dseg} One easily checks that $x,y \in [x,y]_\gamma$. Then $[x,y] \subseteq [x,y]_\gamma$, because $[x,y]_\gamma$ is convex.

\ref{dseg_construction} Observe that
\begin{align*}
[x,y]_\gamma&=\setcond{z\in X}{\gamma(x-z)+\gamma(z-y)=\gamma(x-y)}\\
&=\bigcup_{0\leq \lambda \leq 1} \setcond{z\in X}{\begin{matrix}\gamma(x-z)=\lambda\gamma(x-y),\\\lambda\gamma(x-y)+\gamma(z-y)=\gamma(x-y)\end{matrix}}\\
&=\bigcup_{0\leq \lambda \leq 1} \setcond{z\in X}{\begin{matrix}\gamma(x-z)=\lambda\gamma(x-y),\\\gamma(z-y)=(1-\lambda)\gamma(x-y)\end{matrix}}\\
&=\bigcup_{0\leq \lambda \leq 1} \setcond{z\in X}{\begin{matrix}x-z\in \ms{0}{\lambda\gamma(x-y)},\\z-y\in \ms{0}{(1-\lambda)\gamma(x-y)}\end{matrix}}\\
&=\bigcup_{0\leq \lambda \leq 1} \setcond{z\in X}{\begin{matrix}z\in x-\ms{0}{\lambda\gamma(x-y)},\\z\in y+\ms{0}{(1-\lambda)\gamma(x-y)}\end{matrix}}\\
&=\bigcup_{0\leq \lambda \leq 1}(x-\ms{0}{\lambda\gamma(x-y)})\cap (y+\ms{0}{(1-\lambda)\gamma(x-y)}).
\end{align*}
\qed
\end{proof}

Using the above mentioned equation $\ft(\setn{x,y})=[x,y]_d$ from \cite[p.~290]{MartiniSwWe2002}, we know now that
\begin{equation}
\setn{x,y} \subseteq [x,y] \subseteq [x,y]_\gamma = [x,y]_d = \ft(\setn{x,y})\label{eq:norm_situation}
\end{equation}
for arbitrary $x,y \in X$, provided that $\gamma$ is a norm. The situation is different if $\gamma$ is not a norm.

\begin{proposition}\label{prop:asymmetric}
Let $(X,\gamma)$ be a generalized Minkowski space such that $\gamma$ is not a norm. Then there exists $x_0 \in X \setminus \setn{0}$ such that $\ft(\setn{x_0,0})=\setn{0}$.
\end{proposition}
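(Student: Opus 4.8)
The plan is to exhibit a single linear functional that norms a suitable point $x_0$ from one side while being strictly ``too short'' in the reversed direction; the strict deficit in the reversed direction is exactly what will upgrade optimality of $0$ to \emph{uniqueness}.

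First I would manufacture the functional. Since $\gamma$ is not a norm, its unit ball $\mc{0}{1}$ is not centrally symmetric, and by the polarity relations recorded above (namely $\mcg{\gamma^\circ}{0}{1}=\mc{0}{1}^\circ$ and $(-\mc{0}{1})^\circ=-\mc{0}{1}^\circ$, together with the fact that polarity is an involution on convex bodies having the origin as an interior point) the unit ball of $\gamma^\circ$ fails to be centrally symmetric as well. Hence $\gamma^\circ$ is not a norm, so there is some $\phi_0\in X^\ast$ with $\gamma^\circ(\phi_0)\neq\gamma^\circ(-\phi_0)$. After replacing $\phi_0$ by $-\phi_0$ if necessary and rescaling by $\gamma^\circ(\phi_0)>0$, I obtain $\phi\in X^\ast$ with $\gamma^\circ(\phi)=1$ and, crucially, $\gamma^\circ(-\phi)<1$.

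Next I would attach a point to $\phi$. Using the representation $\gamma^\circ(\phi)=\sup\setcond{\skpr{\phi}{y}}{\gamma(y)\leq 1}=1$ and compactness of $\mc{0}{1}$, the supremum is attained at some $x_0$ with $\skpr{\phi}{x_0}=1$; the Cauchy--Schwarz-like inequality \eqref{eq:cauchy-schwarz} then forces $1=\skpr{\phi}{x_0}\leq\gamma(x_0)\leq 1$, so $\gamma(x_0)=1$, whence $x_0\neq 0$ and $\phi$ is a $\gamma$-norming functional of $x_0$. Because $\gamma^\circ(-\phi)\leq 1$, this already yields $0\in\ft(\setn{x_0,0})$ via Theorem~\ref{thm:fermat-torricelli}\ref{thm_part_b}. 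For the decisive uniqueness claim I would estimate the objective $f(z)=\gamma(x_0-z)+\gamma(-z)$ directly, applying \eqref{eq:cauchy-schwarz} to the two summands with $\phi$ and $-\phi$ respectively: $\gamma(x_0-z)\geq\skpr{\phi}{x_0-z}$ and $\gamma(-z)\geq\skpr{-\phi}{-z}$, so that $f(z)\geq\skpr{\phi}{x_0}=\gamma(x_0)=f(0)$. The second inequality is \emph{strict} whenever $z\neq 0$, since then $\skpr{-\phi}{-z}\leq\gamma^\circ(-\phi)\gamma(-z)<\gamma(-z)$ by $\gamma^\circ(-\phi)<1$ and $\gamma(-z)>0$. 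Thus $f(z)>f(0)$ for every $z\neq 0$, giving $\ft(\setn{x_0,0})=\setn{0}$.

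The only genuinely non-routine step is the first one: securing a functional whose polar value equals $1$ in one direction but is strictly below $1$ in the reversed direction. This strict gap is the concrete trace of $\gamma$ failing to be a norm, and it is precisely what distinguishes the present situation from the symmetric case, where \eqref{eq:norm_situation} shows that the entire $d$-segment $\clseg{x_0}{0}_\gamma$ belongs to the Fermat--Torricelli locus. Once $\phi$ and $x_0$ are in hand, the remainder is a one-line application of the Cauchy--Schwarz-like inequalities, with strictness handed over verbatim by $\gamma^\circ(-\phi)<1$.
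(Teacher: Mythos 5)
Your proof is correct, but it takes a genuinely different route from the paper's. The paper argues entirely on the primal side: it chooses $x_0$ to maximize the asymmetry ratio $\gamma(-x)/\gamma(x)$ over $x\neq 0$ (possible by compactness of the unit sphere), notes that this maximum exceeds $1$ because $\gamma$ is not a norm, and then combines the resulting inequality $\gamma(x)\geq\gamma(-x)/\gamma(-x_0)$ with the triangle inequality to get the strict estimate $f(x)\geq 1+\bigl(1-\tfrac{1}{\gamma(-x_0)}\bigr)\gamma(-x)>1=f(0)$. You instead work on the dual side: you transfer the asymmetry of $\mc{0}{1}$ to the polar body via $\mcg{\gamma^\circ}{0}{1}=\mc{0}{1}^\circ$, $(-K)^\circ=-K^\circ$ and the bipolar involution, extract a functional $\phi$ with $\gamma^\circ(\phi)=1>\gamma^\circ(-\phi)$, expose a unit vector $x_0$ with $\skpr{\phi}{x_0}=1$, and let the strict inequality $\skpr{-\phi}{-z}<\gamma(-z)$ for $z\neq 0$ do all the work. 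Both arguments are sound; each step of yours checks out (in particular the telescoping $\skpr{\phi}{x_0-z}+\skpr{-\phi}{-z}=\skpr{\phi}{x_0}=f(0)$ and the strictness coming from $\gamma^\circ(-\phi)<1$ together with $\gamma(-z)>0$). The paper's version is more elementary and self-contained --- it needs no duality, and as a by-product identifies the explicit extremal point used in the subsequent Remark, together with a quantitative gap $\bigl(1-\tfrac{1}{\gamma(-x_0)}\bigr)\gamma(-x)$. Your version is shorter once the polar machinery of Section~2 is accepted, aligns naturally with the norming-functional optimality conditions of Theorem~\ref{thm:fermat-torricelli} (the same $\phi$ certifies $0\in\ft(\setn{x_0,0})$ via $\gamma^\circ(-\phi)\leq 1$ and uniqueness via the strict inequality), and shows more generally that any unit vector exposed by a functional $\phi$ with $\gamma^\circ(\phi)=1>\gamma^\circ(-\phi)$ serves as a valid $x_0$, a family that need not coincide with the maximizers of the asymmetry ratio.
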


\begin{proof}
By compactness of $\ms{0}{1}$, there exists $x_0 \in \ms{0}{1}$ such that
\begin{equation*}
\gamma(-x_0)=\max\setcond{\gamma(-x)}{x \in \ms{0}{1}}=\max\setcond{\frac{\gamma(-x)}{\gamma(x)}}{x \in X \setminus \setn{0}}.
\end{equation*}
Since $\gamma$ is not symmetric,
\begin{equation}
\gamma(-x_0)>\gamma(x_0)=1.\label{eq1}
\end{equation}
Moreover,
\begin{equation}
\gamma(x) \geq \frac{\gamma(-x)}{\gamma(-x_0)} \quad\text{ for all }x \in X.\label{eq2}
\end{equation}

$\ft(\setn{x_0,0})$ consists of all minimizers of $f:X \to \RR, f(x)=\gamma(x_0-x)+\gamma(0-x)$. In order to show that $\ft(\setn{x_0,0})=\setn{0}$, it is enough to prove that $f(x)>f(0)$ for all $x \in X \setminus \setn{0}$. For arbitrary $x \neq 0$, we estimate
\setlength{\mywidth}{\widthof{$\stackrel{\text{(triangle inequality)}}{\geq}$}}
\begin{align*}
f(x) &\gleich \gamma(x_0-x)+\gamma(-x)\\
&\groessergleich[\text{\eqref{eq2}}]\frac{1}{\gamma(-x_0)}\gamma(-x_0+x)+\gamma(-x)\\
&\groessergleich[\text{(triangle inequality)}]\frac{1}{\gamma(-x_0)}\Big(\gamma(-x_0)-\gamma(-x)\Big)+\gamma(-x)\\
&\gleich 1+ \lr{1-\frac{1}{\gamma(-x_0)}}\gamma(-x)\\
&\groesser[\text{\eqref{eq1}}] 1\\
&\gleich[\text{\eqref{eq1}}] \gamma(x_0)\\
&\gleich f(0),
\end{align*}
and the proof is complete.\qed
\end{proof}

\begin{remark}
The above proof together with a dilatation argument shows the following: If $(X,\gamma)$ is a generalized Minkowski space and if $x_0 \in X \setminus\setn{0}$ satisfies
\begin{equation*}
\frac{\gamma(-x_0)}{\gamma(x_0)}= \max\setcond{\frac{\gamma(-x)}{\gamma(x)}}{ x \in X \setminus \setn{0}}>1,
\end{equation*}
then $\ft(\setn{x_0,0})=\setn{0}$.
\end{remark}

We obtain serveral characterizations of norms among arbitrary gauges.
\begin{corollary}\label{cor-norm_equivalent}
Let $(X,\gamma)$ be a generalized Minkowski space. The following are equivalent.
\begin{enumerate}[label={(\alph*)},leftmargin=0.3cm,align=left]
\item{$\gamma$ is a norm.\label{norm}}
\item{For any two distinct points $x,y \in X$, $\abs{\ft(\setn{x,y})}>1$.}
\item{For any two distinct points $x,y \in X$, $\abs{\ft(\setn{x,y})}=\infty$.}
\item{For any two distinct points $x,y \in X$, $\ft(\setn{x,y}) \setminus \setn{x} \neq \emptyset$.}
\item{For any two distinct points $x,y \in X$, $\ft(\setn{x,y}) \setminus \setn{x,y} \neq \emptyset$.}
\item{For any two distinct points $x,y \in X$, $x \in \ft(\setn{x,y})$.}
\item{For any two distinct points $x,y \in X$, $\setn{x,y} \subseteq \ft(\setn{x,y})$.}
\item{For any two distinct points $x,y \in X$, $[x,y] \subseteq \ft(\setn{x,y})$.}
\item{For any two distinct points $x,y \in X$, $\ri([x,y]) \cap \ft(\setn{x,y}) \neq \emptyset$.}
\item{For any two distinct points $x,y \in X$, $[x,y]_\gamma \subseteq \ft(\setn{x,y})$.}
\item{For any two distinct points $x,y \in X$, $\ri([x,y]_\gamma) \cap \ft(\setn{x,y}) \neq \emptyset$.\label{ri_gamma}}
\item{For any two distinct points $x,y \in X$, $[x,y]_\gamma = \ft(\setn{x,y})$.\label{dseg=ft}}
\end{enumerate}
\end{corollary}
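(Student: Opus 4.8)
The plan is to show that \ref{norm} implies all the remaining eleven properties, and then close the equivalence by proving the reverse direction in contrapositive form, so that each of \ref{norm}--\ref{dseg=ft} ends up equivalent to \ref{norm}. The forward half is immediate from the norm situation recorded in \eqref{eq:norm_situation}: if $\gamma$ is a norm, then for any two distinct points $x,y$ one has the chain $\setn{x,y}\subseteq[x,y]\subseteq[x,y]_\gamma=\ft(\setn{x,y})$, and every one of the other conditions reads off from it. Indeed, $[x,y]\subseteq\ft(\setn{x,y})$ forces $\ft(\setn{x,y})$ to be infinite (giving (b) and (c)) and to contain $x$, $y$ together with all relative-interior points of the segment (giving (d)--(i)), while the equality $[x,y]_\gamma=\ft(\setn{x,y})$ yields (j), \ref{dseg=ft}, and, since a nonempty convex set has nonempty relative interior, also \ref{ri_gamma}.

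For the reverse half I would argue by contraposition: assuming $\gamma$ is not a norm, I produce for each property a pair of distinct points violating it. Proposition~\ref{prop:asymmetric} supplies exactly the needed witness, a point $x_0\neq 0$ with $\ft(\setn{x_0,0})=\setn{0}$. Because this locus is a single endpoint of the unordered pair, most negations are read off at once: $\abs{\ft(\setn{x_0,0})}=1$ kills (b) and (c); the inclusion $\ft(\setn{x_0,0})=\setn{0}\subseteq\setn{x_0,0}$ kills (e) and (g); and by Lemma~\ref{lem-segments}\ref{seg_in_dseg} the set $[x_0,0]_\gamma$ contains the nondegenerate segment $[x_0,0]$, hence is not contained in the singleton $\setn{0}$, which kills (h), (j), and \ref{dseg=ft}. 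The point to watch is that properties (d) and (f) single out $x$ and that $[x,y]_\gamma$ is not symmetric in $x,y$, so the witness pair must be oriented correctly: for (f) and \ref{ri_gamma} I take $(x,y)=(x_0,0)$, whereas for (d) I take $(x,y)=(0,x_0)$, arranging in each case that the unique locus point $0$ lands on the wrong side of the asserted inclusion.

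The one genuinely delicate verification is \ref{ri_gamma} (together with the endpoint part of (i)): I must check that the unique locus point $0$ is not a relative-interior point of the generalized segment. For the ordering $(x,y)=(x_0,0)$ this follows by probing the direction $-x_0$: for $t>0$ the convex function $z\mapsto\gamma(x_0-z)+\gamma(z)$ takes at $z=-tx_0$ the value $\gamma(x_0)+t\bigl(\gamma(x_0)+\gamma(-x_0)\bigr)>\gamma(x_0)$, so $-tx_0\notin[x_0,0]_\gamma$ while $x_0\in[x_0,0]_\gamma$. Thus along the line $\lin\setn{x_0}$ the set $[x_0,0]_\gamma$ lies entirely on one side of $0$; since $x_0-0$ is a direction tangent to $\aff([x_0,0]_\gamma)$, this places $0$ on the relative boundary, so $0\notin\ri([x_0,0]_\gamma)$ and therefore $\ri([x_0,0]_\gamma)\cap\ft(\setn{x_0,0})=\emptyset$. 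The same observation that an endpoint never lies in the relative interior, applied to the ordinary segment $[x_0,0]$, disposes of (i). Combining the forward chain from \eqref{eq:norm_situation} with these witnesses establishes the equivalence of all twelve statements.
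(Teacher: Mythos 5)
Your proof is correct and follows essentially the same route as the paper's: the forward implications come from \eqref{eq:norm_situation}, the reverse ones from the witness $x_0$ supplied by Proposition~\ref{prop:asymmetric}, and the only delicate case \ref{ri_gamma}$\Rightarrow$\ref{norm} is settled exactly as in the paper by showing $-\eps x_0\notin[x_0,0]_\gamma$ for $\eps>0$, so that $0$ cannot lie in $\ri([x_0,0]_\gamma)$. Your explicit attention to the orientation of the witness pair for the asymmetric conditions (d) and (f) is a detail the paper leaves implicit, but it does not change the argument.
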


\begin{proof}
We know from \eqref{eq:norm_situation} that \ref{norm} implies the other conditions. Proposition~\ref{prop:asymmetric} shows that each of the other conditions implies \ref{norm}. However, we give details for the implication \ref{ri_gamma}$\Rightarrow$\ref{norm}, since it is less obvious.

We assume that \ref{ri_gamma} is satisfied, whereas \ref{norm} fails. Then Proposition~\ref{prop:asymmetric} provides $x_0 \in X \setminus \setn{0}$ such that $\ft(\setn{x_0,0})=\setn{0}$, and \ref{ri_gamma} yields $0 \in\ri([x_0,0]_\gamma)$. Since $[x_0,0] \subseteq [x_0,0]_\gamma$ (see Lemma~\ref{lem-segments}\ref{seg_in_dseg}), $0$ must be an inner point of the set $[x_0,0]_\gamma \cap \aff\setn{x_0,0}$ relative to the natural topology of $\aff\setn{x_0,0}=\lin\setn{x_0}$. Hence there exists $\eps > 0$ such that $-\eps x_0 \in [x_0,0]_\gamma \cap \aff\setn{x_0,0} \subseteq [x_0,0]_\gamma$. The inclusion $-\eps x_0 \in [x_0,0]_\gamma$ yields
\begin{equation*}
\gamma(x_0-(-\eps x_0))+\gamma(-\eps x_0-0)=\gamma(x_0-0).
\end{equation*}
We obtain the contradiction $(1+\eps)\gamma(x_0)+\gamma(-\eps x_0)=\gamma(x_0)$, and the proof is complete.\qed
\end{proof}

\begin{example}\label{ex:negative}
Take $X=\RR^2$ and
\begin{equation*}
\gamma(\xi_1,\xi_2)=\max\setn{-\frac{1}{2}\xi_1, \xi_1+\xi_2, \xi_1-\xi_2, \frac{1}{2}\xi_1+\xi_2, \frac{1}{2}\xi_1-\xi_2}.
\end{equation*}

For $x=(-2,2)$ and $y=(-2,-2)$, we have $\ft(\setn{x,y})=\setn{(0,0)}$. This can be shown by elementary calculations, and Figure~\ref{fig:ft_subset_gamma-segment} illustrates level curves of the corresponding objective function.
\begin{figure}[h!]
\begin{center}
\begin{tikzpicture}

\begin{axis}[%
width=4in,
height=3.15483870968in,
unbounded coords=jump,
scale only axis,
xmin=-4.53762781186094,
xmax=4.33762781186094,
ymin=-3.5,
ymax=3.5,
axis x line*=bottom,
axis y line*=left
]

\addplot [draw=black,forget plot] table[row sep=crcr]{
-0.095 0.1\\
-0.1 0.095\\
-0.1 -0.095\\
-0.095 -0.1\\
0.095 -0.1\\
0.1 -0.095\\
0.1 0.095\\
0.095 0.1\\
-0.095 0.1\\
NaN NaN\\
};

\addplot [draw=black,forget plot] table[row sep=crcr]{
-0.595 0.6\\
-0.6 0.595\\
-0.6 -0.595\\
-0.595 -0.6\\
0.595 -0.6\\
0.6 -0.595\\
0.6 0.595\\
0.595 0.6\\
-0.595 0.6\\
NaN NaN\\
};

\addplot [draw=black,forget plot] table[row sep=crcr]{
-1.1 1.0975\\
-1.1 -1.0975\\
-1.095 -1.1\\
1.095 -1.1\\
1.1 -1.095\\
1.1 1.095\\
1.095 1.1\\
-1.095 1.1\\
-1.1 1.0975\\
NaN NaN\\
};

\addplot [draw=black,forget plot] table[row sep=crcr]{
-1.6 1.6\\
-1.6 -1.6\\
1.6 -1.6\\
1.6 1.6\\
-1.6 1.6\\
NaN NaN\\
};

\addplot [draw=black,forget plot] table[row sep=crcr]{
2.095 2.1\\
-1.9 2.1\\
-2 2.05\\
-2.05 2\\
-2.05 -2\\
-2 -2.05\\
-1.9 -2.1\\
2.095 -2.1\\
2.1 -2.095\\
2.1 2.095\\
2.095 2.1\\
NaN NaN\\
};

\addplot [draw=black,forget plot] table[row sep=crcr]{
2.595 2.6\\
-1.4 2.6\\
-2 2.3\\
-2.3 2\\
-2.3 -2\\
-2 -2.3\\
-1.4 -2.6\\
2.595 -2.6\\
2.6 -2.595\\
2.6 2.595\\
2.595 2.6\\
NaN NaN\\
};

\addplot [draw=black,forget plot] table[row sep=crcr]{
3.095 3.1\\
-0.9 3.1\\
-2 2.55\\
-2.55 2\\
-2.55 -2\\
-2 -2.55\\
-0.9 -3.1\\
3.095 -3.1\\
3.1 -3.095\\
3.1 3.095\\
3.095 3.1\\
NaN NaN\\
};

\addplot [draw=black,forget plot] table[row sep=crcr]{
-0.6 3.5\\
-2 2.8\\
-2.8 2\\
-2.8 -2\\
-2 -2.8\\
-0.6 -3.5\\
NaN NaN\\
};

\addplot [draw=black,forget plot] table[row sep=crcr]{
-1.1 3.5\\
-2 3.05\\
-3.05 2\\
-3.05 -2\\
-2 -3.05\\
-1.1 -3.5\\
NaN NaN\\
};

\addplot [draw=black,forget plot] table[row sep=crcr]{
-1.6 3.5\\
-2 3.3\\
-3.3 2\\
-3.3 -2\\
-2 -3.3\\
-1.6 -3.5\\
NaN NaN\\
};

\addplot [draw=black,forget plot,line width=1.8pt,mark=*,mark size={1pt}] table[row sep=crcr]{
-2 -2\\
-2 2\\
};

\node[below right] at (axis cs:-2,2) {$x$};
\node[above right] at (axis cs:-2,-2) {$y$};
\end{axis}
\end{tikzpicture}%
\end{center}
\caption{The Fermat--Torricelli locus of two points does not necessarily belong to the intersection of the respective $d$-segments.}\label{fig:ft_subset_gamma-segment}
\end{figure}
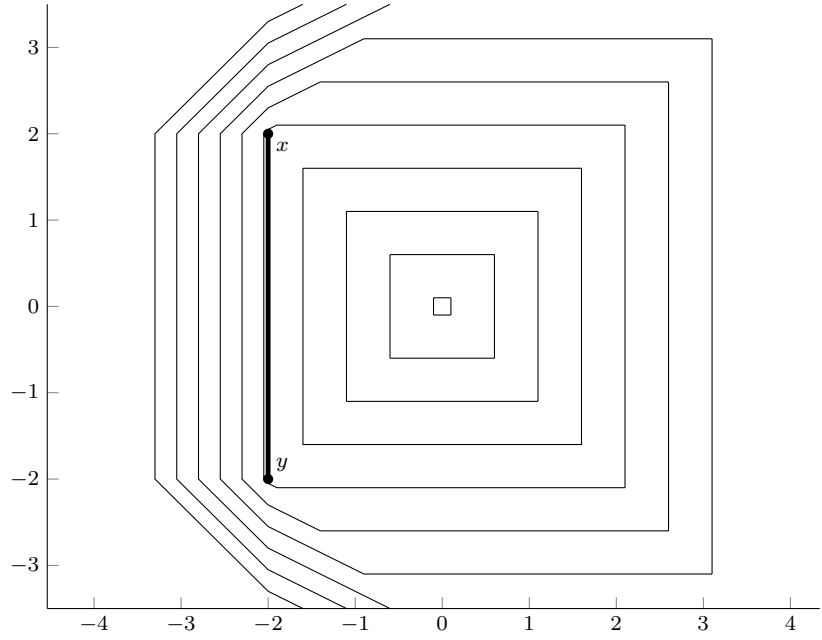
Moreover, we obtain $[x,y]_\gamma=[y,x]_\gamma=[x,y]$ (bold line in Figure~\ref{fig:ft_subset_gamma-segment}), which can be shown with the help of Lemma~\ref{lem-segments}\ref{dseg_construction} (see Figure~\ref{fig:construction_gamma-segment} for an illustration). 
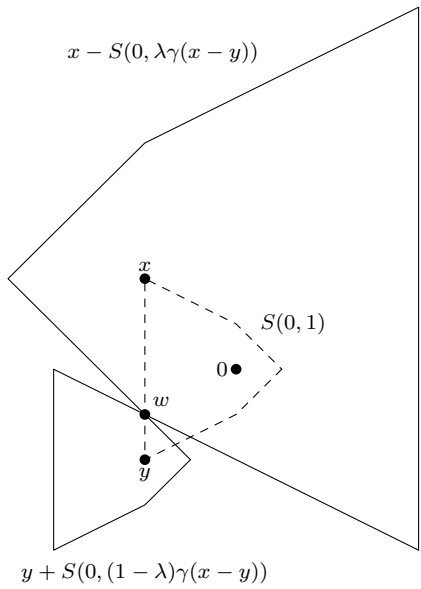
\begin{figure}[h!]
\begin{center}
\begin{tikzpicture}[line cap=round,line join=round,>=triangle 45,x=0.6cm,y=0.6cm]
\draw[color=black,dashed] (1,0) -- (0,-1) -- (-2,-2) -- (-2,2) -- (0,1) -- cycle;
\draw[color=black] (-5,2) -- (-2,5) -- (4,8) -- (4,-4) -- (-2,-1) -- cycle;
\draw[color=black] (-1,-2) -- (-2,-3) -- (-4,-4) -- (-4,0) -- (-2,-1) -- cycle;
\fill[color=black] (-2,-2) circle (2pt) node[below] {$y$};
\fill[color=black] (0,0) circle (2pt) node[left] {$0$};
\fill[color=black] (-2,2) circle (2pt) node[above] {$x$};
\fill[color=black] (-2,-1) circle (2pt) node[above right] {$w$};
\draw[color=black] (1.25,1) node{$\ms{0}{1}$};
\draw[color=black] (-2,-4.5) node{$y+\ms{0}{(1-\lambda)\gamma(x-y)}$};
\draw[color=black] (-1.6,7) node{$x-\ms{0}{\lambda\gamma(x-y)}$};
\end{tikzpicture}
\end{center}
\caption{Constructing $[x,y]_\gamma$: $w$ is a point of $[x,y]_\gamma$.}\label{fig:construction_gamma-segment}
\end{figure}

In our example, the Fermat--Torricelli locus $\ft(\setn{x,y})$ has the following two properties, that are known to be impossible in classical normed spaces.
\begin{enumerate}[label={(\Roman*)},leftmargin=0.3cm,align=left,series=failure]
\item{There is a finite set $P$ in a two-dimensional generalized Minkowski space $(X,\gamma)$ such that $\co(P)\cap \ft(P) = \emptyset$ (confer \cite[Theorem~3.4]{MartiniSwWe2002} for the classical setting).}
\item{We have $\ft(\setn{x,y})\centernot\subseteq [x,y]_{\gamma}$ (whereas $\ft(\setn{x,y})=[x,y]_d$ in the classical setting, see \cite[p.~290]{MartiniSwWe2002} and also Corollary~\ref{cor-norm_equivalent}\ref{dseg=ft}).}
\end{enumerate}
\end{example}

\begin{example}
Strict convexity of norms can be characterized by means of the Fermat--Torricelli locus: the norm of a Minkowski space $(X,\mnorm{\cdot})$ is strictly convex if and only if $\ft(P)$ is a singleton for every non-collinear set $P=\setn{p_1,\ldots,p_n} \subseteq X$, see \cite[Theorem~3.3]{MartiniSwWe2002}. For generalized Minkowski spaces, this condition remains necessary for strict convexity of gauges (see Proposition~\ref{prop-strict_convex}), but the sufficiency fails.

\begin{enumerate}[label={(\Roman*)},leftmargin=0.3cm,align=left,failure]
\item{There is a generalized Minkowski space $(X,\gamma)$ such that, for any non-collinear subset $P=\setn{p_1,\ldots,p_n} \subseteq X$, $\abs{\ft(P)}=1$, but $\gamma$ is not strictly convex.}
\end{enumerate}
Take $X=\RR^2$ and
\begin{equation*}
\gamma(\xi_1,\xi_2)=\begin{cases}\abss{\xi_1}+\abss{\xi_2},& \xi_1,\xi_2 \in \RR_+,\\\sqrt{\xi_1^2+\xi_2^2},&\text{otherwise.}\end{cases}
\end{equation*}
Assume that there is a non-collinear set $P=\setn{p_1,\ldots,p_n} \subseteq X$ such that $\abs{\ft(P)}\neq 1$. Then, by convexity of $\ft(P)$, there is a point $x\in\ft(P)\setminus P$, and, by Theorem~\ref{thm:fermat-torricelli} and Proposition~\ref{prop:ft_locus_cone_intersection}, there are $\gamma$-norming functionals $\phi_i$ of $p_i-x$ (for $i \in \setn{1,\ldots,n}$) such that $\sum_{i=1}^n \phi_i=0$ and $\ft(P)$ is the intersection of the cones $C(p_i,\phi_i)$ apexed at $p_i$ and generated by exposed faces of $p_i-\mc{0}{1}$. Note that, after identifying $X^\ast$ with $\RR^2$, every $\phi_i$ belongs to
\begin{align*}
\msg{\gamma^\circ}{0}{1}&=\setcond{(\cos \alpha, \sin \alpha)}{\frac{\piup}{2} < \alpha < 2\piup}\\
&\qquad\cup \setcond{(1,\beta)}{0 \leq \beta \leq 1}\cup \setcond{(\beta,1)}{0 \leq \beta \leq 1},
\end{align*}
and $C(p_i,\phi_i)$ is a ray if $\phi_i \neq (1,1)$ and represents an angle of size $\frac{\piup}{2}$ if $\phi_i=(1,1)$. Suppose that $\phi_i \neq (1,1)$ for $i\in\setn{1,\ldots,k}$ and $\phi_i=(1,1)$ for $k < i \leq n$. Since $\abs{\ft(P)}>1$, all rays $C(p_i,\phi_i)$, $i\in\setn{1,\ldots,k}$, are parallel. Hence $\phi_1,\ldots,\phi_k \in \setn{\pm(\cos \alpha_0, \sin \alpha_0)}$ with fixed $\alpha_0 \in \lr{\frac{\piup}{2},\piup}$ (Case 1: rays with negative slope) or $\phi_1,\ldots,\phi_k \in \setcond{(1,\beta)}{0 \leq \beta < 1} \cup \setn{(-1,0)}$ (Case 2: horizontal rays) or $\phi_1,\ldots,\phi_k \in \setcond{(\beta,1)}{0 \leq \beta < 1} \cup \setn{(0,-1)}$ (Case 3: vertical rays) or $\phi_1=\ldots=\phi_k=(\cos \alpha_0, \sin \alpha_0)$ with fixed $\alpha_0 \in \lr{\piup,\frac{3\piup}{2}}$ (Case 4: rays with positive slope). Since $P$ is not collinear, we obtain $k < n$ (otherwise $P$ would be contained in the straight line passing through $x$ and parallel to the rays). The equation $\sum_{i=1}^n \phi_i=0$ yields
\begin{equation*}
(\xi_1^\ast,\xi_2^\ast)\defeq \sum_{i=1}^k \phi_i= -\sum_{i=k+1}^n \phi_i= (n-k)(-1,-1).
\end{equation*}
This is impossible in Case 1, because then $(\xi_1^\ast,\xi_2^\ast)=l(\cos \alpha_0, \sin \alpha_0)$, with $l \in \ZZ$, is either zero or its coordinates have different signs. In Case 2 (Case 3), we obtain a contradiction, since then $\xi_2^\ast \geq 0$ ($\xi_1^\ast \geq 0$). Finally,
Case 4 gives $(\xi_1^\ast,\xi_2^\ast)=k(\cos \alpha_0, \sin \alpha_0)$, the equality $\xi_1^\ast=-(n-k)=\xi_2^\ast$ implies $\alpha_0=\frac{5\piup}{4}$, and we obtain a contradiction from $-(n-k)=\xi_1^\ast=-k\frac{\sqrt{2}}{2}$, since $\sqrt{2}$ is irrational.
\end{example}

The last example shows that a characterization of strict convexity of norms does not extend to arbitrary gauges. We shall give a characterization in terms of segments. It is based on a generalization of another statement on normed spaces.

\begin{lemma}[see {\cite[Proposition~1]{MartiniSwWe2001}}]\label{lem-triangle=}
For all points $x,y\in X \setminus\setn{0}$ of a generalized Minkowski space $(X,\gamma)$, $\gamma (x+y)= \gamma(x)+ \gamma(y)$ if and only if $\clseg{\frac{x}{\gamma(x)}}{\frac{y}{\gamma(y)}}\subseteq \ms{0}{1}$.
\end{lemma}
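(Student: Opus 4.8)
The plan is to reduce the biconditional to a single scalar condition on the unit sphere by normalizing, and then to invoke convexity of $\gamma$. Write $a=\gamma(x)$ and $b=\gamma(y)$; since $x,y\neq 0$, axioms \ref{linefree} and \ref{homogeneous} give $a,b>0$. Set $u=x/a$ and $v=y/b$, so that $u,v\in\ms{0}{1}$ and $\clseg{\frac{x}{\gamma(x)}}{\frac{y}{\gamma(y)}}=\clseg{u}{v}$. First I would single out the point
\[
z\defeq \frac{x+y}{a+b}=\frac{a}{a+b}\,u+\frac{b}{a+b}\,v,
\]
which is a convex combination of $u$ and $v$ with both coefficients strictly in $(0,1)$, hence an interior point of the segment $\clseg{u}{v}$. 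By positive homogeneity, $\gamma(z)=\gamma(x+y)/(a+b)$, so the equation $\gamma(x+y)=\gamma(x)+\gamma(y)$ is \emph{equivalent} to $\gamma(z)=1$, i.e.\ to $z\in\ms{0}{1}$.

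This reformulation settles the direction ``$\Leftarrow$'' at once: if $\clseg{u}{v}\subseteq\ms{0}{1}$, then in particular $z\in\clseg{u}{v}\subseteq\ms{0}{1}$, whence $\gamma(z)=1$ and the triangle inequality holds with equality.

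For the converse ``$\Rightarrow$'', the tool is convexity of $\gamma$ (Lemma~\ref{lem:subdiff_norm}). I would consider the convex function $g\colon[0,1]\to\RR$, $g(\lambda)=\gamma((1-\lambda)u+\lambda v)$. Its endpoint values are $g(0)=\gamma(u)=1$ and $g(1)=\gamma(v)=1$, and convexity gives $g(\lambda)\leq(1-\lambda)g(0)+\lambda g(1)=1$ for every $\lambda$; thus $1$ is the maximum of $g$. By the reformulation above, the hypothesis forces $g$ to attain this maximum $1$ at the interior point $\lambda_0=b/(a+b)\in(0,1)$. The decisive (and only non-routine) step is the elementary fact that a convex function on an interval attaining its maximum at an interior point is constant: for $\mu>\lambda_0$ one writes $\lambda_0=(1-s)\cdot 0+s\mu$ and for $\mu<\lambda_0$ one writes $\lambda_0=(1-s)\mu+s\cdot 1$ (with the appropriate $s\in(0,1)$), and in either case convexity together with $g\leq 1$ forces $g(\mu)=1$. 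Hence $g\equiv 1$ on $[0,1]$, i.e.\ $(1-\lambda)u+\lambda v\in\ms{0}{1}$ for all $\lambda$, which is precisely $\clseg{u}{v}\subseteq\ms{0}{1}$.

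The main obstacle is bookkeeping rather than conceptual depth: one must guarantee that $\lambda_0$ is \emph{strictly} interior (ensured by $a,b>0$, hence by $x,y\neq 0$) so that the ``interior maximum forces constancy'' argument applies, and one must treat the two subcases $\mu<\lambda_0$ and $\mu>\lambda_0$ separately when expressing $\lambda_0$ as a convex combination. Notably, the proof uses only subadditivity and positive homogeneity (through convexity); no symmetry of $\gamma$ enters, which is exactly why this characterization carries over unchanged from norms to arbitrary gauges.
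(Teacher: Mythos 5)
Your proof is correct and follows essentially the same route as the paper: both arguments hinge on the observation that $\frac{x+y}{\gamma(x)+\gamma(y)}$ is a convex combination of the two normalized vectors, giving the backward direction immediately. The only difference is that for the forward direction the paper simply asserts that a unit vector which is a (strict) convex combination of two unit vectors forces the whole segment onto the sphere, whereas you supply the justification explicitly via the interior-maximum argument for the convex function $g$ --- a worthwhile detail, but the same underlying idea.
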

\begin{proof}
We can assume that $x+y \neq 0$, because otherwise $y=-x$, and the claim is trivial.

If $\gamma(x+y)=\gamma(x)+\gamma(y)$, then
\begin{equation*}
\frac{x+y}{\gamma(x+y)}=\frac{\gamma(x)}{\gamma(x+y)}\frac{x}{\gamma(x)}+\frac{\gamma(y)}{\gamma(x+y)}\frac{y}{\gamma(y)},
\end{equation*}
\dah the unit vector $\frac{x+y}{\gamma(x+y)}$ is a convex combination of the unit vectors $\frac{x}{\gamma(x)}$ and $\frac{y}{\gamma(y)}$, hence $\clseg{\frac{x}{\gamma(x)}}{\frac{y}{\gamma(y)}}\subseteq \ms{0}{1}$. 

Conversely, if $\clseg{\frac{x}{\gamma(x)}}{\frac{y}{\gamma(y)}}\subseteq \ms{0}{1}$, we have
\begin{equation*}
\frac{x+y}{\gamma(x)+\gamma(y)}=\frac{\gamma(x)}{\gamma(x)+\gamma(y)}\frac{x}{\gamma(x)}+\frac{\gamma(y)}{\gamma(x)+\gamma(y)}\frac{y}{\gamma(y)},
\end{equation*}
\dah $\frac{x+y}{\gamma(x)+\gamma(y)}$ is a point of the segment $\clseg{\frac{x}{\gamma(x)}}{\frac{y}{\gamma(y)}}$; hence $\frac{x+y}{\gamma(x)+\gamma(y)}$ is a unit vector or, equivalently, $\gamma(x+y)=\gamma(x)+\gamma(y)$.\qed
\end{proof}

\begin{proposition}
A generalized Minkowski space $(X,\gamma)$ has a strictly convex gauge if and only if $[x,y]_\gamma=[x,y]$ for all $x,y \in X$.
\end{proposition}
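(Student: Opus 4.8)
The plan is to characterize strict convexity via the equality case of the triangle inequality, using Lemma~\ref{lem-triangle=} as the bridge between segments on the unit sphere and the additivity $\gamma(x+y)=\gamma(x)+\gamma(y)$. First I would unwind what $[x,y]_\gamma=[x,y]$ means pointwise. By Lemma~\ref{lem-segments}\ref{seg_in_dseg} we always have $[x,y]\subseteq[x,y]_\gamma$, so the content of the claimed equality is the reverse inclusion $[x,y]_\gamma\subseteq[x,y]$ for all $x,y$. A point $z\in[x,y]_\gamma$ satisfies $\gamma(x-z)+\gamma(z-y)=\gamma(x-y)$; writing $u=x-z$ and $v=z-y$ (so $u+v=x-y$), this is exactly the additivity condition $\gamma(u+v)=\gamma(u)+\gamma(v)$, and $z\in[x,y]$ means $z$ lies on the segment, \dah $u$ and $v$ are nonnegative multiples of one another. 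The equivalence to be proven thus reduces to a statement purely about the gauge: $\gamma$ is strictly convex if and only if, whenever $\gamma(u+v)=\gamma(u)+\gamma(v)$ with $u,v$ not both zero, the vectors $u,v$ are positively proportional.

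Next I would run both directions through Lemma~\ref{lem-triangle=}. For the forward direction, assume $\gamma$ is strictly convex and take $z\in[x,y]_\gamma$; the degenerate cases $z=x$ or $z=y$ (equivalently $u=0$ or $v=0$) put $z$ trivially in $[x,y]$, so assume $u,v\in X\setminus\setn{0}$. Then Lemma~\ref{lem-triangle=} gives $\clseg{\frac{u}{\gamma(u)}}{\frac{v}{\gamma(v)}}\subseteq\ms{0}{1}$, a segment on the unit sphere; strict convexity forbids any nondegenerate such segment, so $\frac{u}{\gamma(u)}=\frac{v}{\gamma(v)}$, whence $u,v$ are positive multiples of one another and $z\in[x,y]$. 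This yields $[x,y]_\gamma\subseteq[x,y]$.

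For the converse I would argue contrapositively: if $\gamma$ is not strictly convex, there is a nondegenerate segment $\clseg{a}{b}\subseteq\ms{0}{1}$ with distinct unit vectors $a\neq b$. Setting $u=a$, $v=b$ (so $\gamma(u)=\gamma(v)=1$), Lemma~\ref{lem-triangle=} gives $\gamma(u+v)=\gamma(u)+\gamma(v)=2$. The plan is to realize this additivity as a failure of $[x,y]_\gamma=[x,y]$: take $x=u$ and $y=-v$, so that $x-y=u+v$, and consider $z=0$. Then $\gamma(x-z)+\gamma(z-y)=\gamma(u)+\gamma(v)=2=\gamma(u+v)=\gamma(x-y)$, so $z=0\in[x_0,y]_\gamma$; but since $a$ and $b$ are \emph{distinct} unit vectors, $u$ and $-(-v)=v$ are not negatively proportional, so $0$ does not lie on the segment $[u,-v]=[x,y]$. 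Hence $[x,y]_\gamma\neq[x,y]$ for this pair, completing the contrapositive.

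The main obstacle is the bookkeeping in the converse: I must choose the witnessing pair $x,y$ and the point $z\in[x,y]_\gamma\setminus[x,y]$ so that the nondegenerate sphere segment from non–strict convexity genuinely produces a point off the straight segment, and check the geometry of when $u,v$ fail to be positively proportional (distinctness of $a,b$ as unit vectors is exactly what prevents $z$ from collapsing onto $[x,y]$). Everything else is a direct translation through Lemma~\ref{lem-triangle=} and Lemma~\ref{lem-segments}\ref{seg_in_dseg}, so no serious computation is expected.
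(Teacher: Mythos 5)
Your proof is correct and follows essentially the same route as the paper: both directions are routed through Lemma~\ref{lem-triangle=}, with strict convexity forcing the sphere segment $\clseg{\frac{x-z}{\gamma(x-z)}}{\frac{z-y}{\gamma(z-y)}}$ to degenerate (hence $z\in[x,y]$), and non-strict convexity yielding an explicit witness. Your witness triple $(x,y,z)=(a,-b,0)$ is just a translate of the paper's choice $(z_1+z_2,0,z_1)$, so the arguments coincide.
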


\begin{proof}
We prove that there exist $x,y \in X$ such that $[x,y]_\gamma \neq [x,y]$ if and only if $\gamma$ is not strictly convex. By Lemmas~\ref{lem-segments}\ref{seg_in_dseg} and \ref{lem-triangle=}, the condition $[x,y]_\gamma \neq [x,y]$ is equivalent to $[x,y]_\gamma \centernot\subseteq [x,y]$, which is in turn equivalent to
\begin{equation}
\label{eq-segment}
\exists\; z \in X \setminus [x,y]: \; \clseg{\frac{x-z}{\gamma(x-z)}}{\frac{z-y}{\gamma(z-y)}} \subseteq \ms{0}{1}.
\end{equation}
\eqref{eq-segment} shows that $\gamma$ is not strictly convex, because 
$\clseg{\frac{x-z}{\gamma(x-z)}}{\frac{z-y}{\gamma(z-y)}}$ is not degenerate (\dah a point), since this would imply 
\begin{equation*}
z=\frac{\gamma(z-y)}{\gamma(x-z)+\gamma(z-y)}x+\frac{\gamma(x-z)}{\gamma(x-z)+\gamma(z-y)}y \in [x,y].
\end{equation*}
Conversely, if $\gamma$ is not strictly convex, there exist linearly independent $z_1,z_2 \in X$ such that $[z_1,z_2] \subseteq \ms{0}{1}$. This yields \eqref{eq-segment} for $x=z_1+z_2$, $y=0$, and $z=z_1$.\qed
\end{proof}

\subsection{Boundary Structure of Sublevel Sets of the Objective Function}
\label{chap:boundary_structure}
Figure~\ref{fig:ft_subset_gamma-segment} illustrates sublevel sets 
\begin{equation*}
f_{\leq \alpha}= \setcond{x \in X}{f(x) \leq \alpha}
\end{equation*}
of the Fermat--Torricelli objective function $f=\gamma(p_1-\,\cdot)+\gamma(p_2-\,\cdot)$ for two points $p_1,p_2 \in X=\RR^2$. Every extreme point $x_0$ of the sublevel set $f_{\leq \alpha}$ is of the form $x_0=p_i+\lambda w$, where $i \in \setn{1,2}$, $\lambda \in\RR_+$, and $w$ is a extreme point of $-\mc{0}{1}$. This turns out to be a particular case of a more general phenomenon.

Recall that a \emph{$k$-face}, $0 \leq k \leq d$, of a closed convex set $K \subseteq X=\RR^d$ is a subset $F \subseteq K$ of affine dimension $k$ such that, whenever a relative interior point of a line segment $[x,y] \subseteq K$ meets $F$, then $[x,y] \subseteq F$ \cite[pp.~18,~62]{Schneider1993}. In particular, every $k$-dimensional intersection of $K$ with a supporting hyperplane is a $k$-face. These are called \emph{exposed} $k$-faces, cf. Definition~\ref{def:exposedness_cones}\ref{exposedness} and \cite[p.~63]{Schneider1993}.
For example, $x$ is an extreme point of $K$ if and only if $\setn{x}$ is a $0$-face, and $x$ is an exposed point of $K$ if and only if $\setn{x}$ is an exposed $0$-face. 

According to \cite[Theorem~2.1.2]{Schneider1993}, every $x \in K$ belongs to the relative interior $\ri(F_x)$ of a unique face $F_x$ of $K$. The point $x$ is called \emph{$k$-extreme} if $\dim(F_x) \leq k$. Clearly, $x$ is \emph{extreme} if and only if it is $0$-extreme. The \emph{$k$-skeleton} of $K$ is the set
\begin{equation*}
\ext_k(K)=\setcond{x \in K}{x \text{ is } k\text{-extreme}},
\end{equation*}
see \cite[p.~65]{Schneider1993}.

\begin{lemma}\label{lem-extreme}
Every $(d-1)$-face of a closed convex set $K \subseteq X= \RR^d$, $d\geq 2$, is exposed. In particular,
\begin{equation*}
\ext_{d-2}(K)= \bd(K) \setminus \bigcup\setcond{\ri(F)}{F \text{ is an exposed } (d-1)\text{-face of } K}.
\end{equation*}
\end{lemma}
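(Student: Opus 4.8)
The plan is to establish the two assertions separately, working in the principal case $\dim K=d$; the low-dimensional cases $\dim K\le d-1$ are routine, because then $\bd(K)=K$ and the only possible $(d-1)$-face is $K$ itself (exposed by its affine hull when $\dim K=d-1$, and nonexistent when $\dim K\le d-2$), so the skeleton identity reduces to a direct comparison of $\ext_{d-2}(K)$ with $\rbd(K)$ or with $K$. So assume $\dim K=d$. For the first assertion, let $F$ be a $(d-1)$-face, put $H\defeq\aff(F)$, which is a hyperplane because $\dim F=d-1$, and fix a point $p\in\ri(F)$; since $\dim F=\dim H$, the set $\ri(F)$ is relatively open in $H$. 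I will show that $H$ supports $K$ and that $F=K\cap H$, which exhibits $F$ as the $(d-1)$-dimensional intersection of $K$ with a supporting hyperplane, \dah as an exposed $(d-1)$-face.

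The main obstacle is to show that $H$ supports $K$. Suppose, to the contrary, that $K$ meets both open half-spaces bounded by $H$, say $a,b\in K$ lie strictly on opposite sides of $H$. For $s\in\mathopen{]}0,1\mathclose{[}$ set $a_s\defeq(1-s)p+sa$ and $b_s\defeq(1-s)p+sb$; these lie in $K$ by convexity, and since their signed distances to $H$ are $s$ times those of $a$ and $b$, the points $a_s,b_s$ remain strictly on opposite sides of $H$. Hence the segment $[a_s,b_s]\subseteq K$ crosses $H$ in a single point $c_s$, which is a relative interior point of $[a_s,b_s]$. As $s\to 0$ we have $a_s,b_s\to p$, so $c_s\to p\in\ri(F)$; because $\ri(F)$ is relatively open in $H$ and $c_s\in H$, we get $c_s\in\ri(F)\subseteq F$ for all sufficiently small $s$. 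For such $s$, a relative interior point $c_s$ of the segment $[a_s,b_s]\subseteq K$ lies in $F$, so the defining property of a face forces $[a_s,b_s]\subseteq F\subseteq H$. This contradicts $a_s\notin H$, proving that $H$ supports $K$.

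It remains to verify $F=K\cap H$. The inclusion $F\subseteq K\cap H$ is immediate from $F\subseteq K$ and $F\subseteq H$. For the reverse inclusion, let $x\in K\cap H$; we may assume $x\neq p$, since otherwise $x=p\in F$. For small $t>0$ consider $p'\defeq p+t(p-x)=(1+t)p-tx$, which lies in $H$ (as $x,p\in H$) and tends to $p$ as $t\to 0$; hence $p'\in\ri(F)\subseteq K$ for small $t$. Then $[x,p']\subseteq K$, and $p=\tfrac{1}{1+t}p'+\tfrac{t}{1+t}x$ is a relative interior point of $[x,p']$ lying in $F$, so the face property yields $[x,p']\subseteq F$ and in particular $x\in F$. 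Thus $F=K\cap H$, completing the proof of the first assertion.

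For the skeleton identity, recall from \cite[Theorem~2.1.2]{Schneider1993} that each $x\in K$ has a unique face $F_x$ with $x\in\ri(F_x)$, and that $x\in\ext_{d-2}(K)$ means $\dim(F_x)\le d-2$. Since $\dim K=d$, the points with $F_x=K$ are exactly the interior points, so every $x$ with $\dim(F_x)\le d-1$ lies in $\bd(K)$. If $\dim(F_x)\le d-2$ and, for contradiction, $x\in\ri(F)$ for some exposed $(d-1)$-face $F$, then uniqueness gives $F=F_x$ and hence $\dim(F_x)=d-1$, a contradiction; thus $x\in\bd(K)\setminus\bigcup\setcond{\ri(F)}{F\text{ an exposed }(d-1)\text{-face}}$. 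Conversely, let $x\in\bd(K)$ satisfy $x\notin\ri(F)$ for every exposed $(d-1)$-face $F$; then $\dim(F_x)\le d-1$, and if $\dim(F_x)=d-1$ the first assertion would make $F_x$ an exposed $(d-1)$-face with $x\in\ri(F_x)$, contradicting the choice of $x$. Hence $\dim(F_x)\le d-2$, \dah $x\in\ext_{d-2}(K)$, and the two sets coincide. The decisive ingredient here is the first assertion, which guarantees that the $(d-1)$-dimensional candidate face $F_x$ is automatically exposed.
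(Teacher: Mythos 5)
Your proof is correct and follows essentially the same route as the paper: showing that the affine hull of a $(d-1)$-face must be a supporting hyperplane whose intersection with $K$ is exactly $F$, and then deriving the skeleton identity from Schneider's decomposition of $\bd(K)$ into relative interiors of faces. You supply more detail than the paper's rather compressed argument (in particular the limiting segments $[a_s,b_s]$ and the low-dimensional cases), but the underlying ideas coincide.
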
 

\begin{proof}
Let $F$ be a $(d-1)$-face of $K$. If $F$ were not exposed, then the hyperplane $H_F$ spanned by $F$ would not support $K$. Then $F=H_F \cap K$ would contain interior points of $K$, and $F$ were not a face, a contradiction. Hence $F$ is exposed.

By \cite[Theorem~2.1.2]{Schneider1993}, $\bd(K)$ is the disjoint union
\begin{equation*}
\bd(K)= \bigcup\setcond{\ri(F)}{F \text{ is a } k\text{-face of } K \text{ with } k \leq d-1}.
\end{equation*}
Therefore,
\begin{align*}
\ext_{d-2}(K) &= \bigcup\setcond{\ri(F)}{F \text{ is a } k\text{-face of } K \text{ with } k \leq d-2}\\
&=\bd(K) \setminus \bigcup\setcond{\ri(F)}{F \text{ is a } (d-1)\text{-face of } K}\\
&=\bd(K) \setminus \bigcup\setcond{\ri(F)}{F \text{ is an exposed } (d-1)\text{-face of } K}.
\end{align*}
\qed
\end{proof}

\begin{proposition}\label{prop-sublevel}
Let $X=\RR^d$, $d\geq 2$, and let $p_1,\ldots,p_n \in X$, $n \geq 1$. Furthermore, let $\gamma_1 \ldots,\gamma_n$ be gauges on $X$, $f=\sum_{i=1}^n \gamma_i(p_i-\cdot)$ be the corresponding Fermat--Torricelli objective function, and let $\alpha \in \RR$. Then every point $x_0 \in \ext_{d-2}(f_{\leq \alpha})$ can be expressed as $x_0= p_i+\lambda w$ with suitable $i \in \setn{1,\ldots,n}$, a real number $\lambda \in [0,\alpha]$, and a point $w \in \ext_{d-2}(-\mcg{\gamma_i}{0}{1})$.
\end{proposition}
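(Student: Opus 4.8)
The plan is to prove the statement by contradiction, exploiting the description of the $(d-2)$-skeleton from Lemma~\ref{lem-extreme}: a boundary point fails to be $(d-2)$-extreme precisely when it lies in the relative interior of an exposed $(d-1)$-face. First I would dispose of the trivial case: if $x_0=p_i$ for some $i$, then (as $d\geq 2$ forces $\ext_{d-2}(-\mcg{\gamma_i}{0}{1})\neq\emptyset$) I may take $\lambda=0$ and any $w\in\ext_{d-2}(-\mcg{\gamma_i}{0}{1})$, giving $x_0=p_i+0\cdot w$ with $\lambda=0\in[0,\alpha]$ since $0\leq f(x_0)\leq\alpha$. So assume $x_0\notin\setn{p_1,\ldots,p_n}$ and put
\begin{equation*}
w_i\defeq\frac{x_0-p_i}{\gamma_i(p_i-x_0)}\qquad(i\in\setn{1,\ldots,n}),
\end{equation*}
each of which satisfies $\gamma_i(-w_i)=1$, \dah $w_i\in\bd(-\mcg{\gamma_i}{0}{1})$. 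Since $\ext_{d-2}(f_{\leq\alpha})\subseteq\bd(f_{\leq\alpha})$ by Lemma~\ref{lem-extreme}, and $\setcond{x\in X}{f(x)<\alpha}$ is open and contained in $\inte(f_{\leq\alpha})$, I would first record that $f(x_0)=\alpha$.

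The heart of the argument is the claim: \emph{if $w_i\notin\ext_{d-2}(-\mcg{\gamma_i}{0}{1})$ for every $i$, then $f$ is affine on a neighbourhood of $x_0$.} Granting this, the proof finishes quickly. Affineness together with $f(x_0)=\alpha$ leaves two possibilities near $x_0$: if $f$ is locally constant, then $x_0\in\inte(f_{\leq\alpha})$, contradicting $x_0\in\bd(f_{\leq\alpha})$; otherwise $f=\alpha+\skpr{\psi}{\cdot-x_0}$ locally with $\psi\neq 0$, so $f_{\leq\alpha}$ lies locally (hence, by convexity, globally) on one side of $H\defeq\setcond{x\in X}{\skpr{\psi}{x-x_0}=0}$, and $f_{\leq\alpha}\cap H$ is an exposed $(d-1)$-face whose relative interior contains $x_0$. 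In both cases Lemma~\ref{lem-extreme} yields $x_0\notin\ext_{d-2}(f_{\leq\alpha})$, a contradiction. Hence some $w_{i}\in\ext_{d-2}(-\mcg{\gamma_{i}}{0}{1})$, and with $\lambda\defeq\gamma_{i}(p_{i}-x_0)\leq f(x_0)=\alpha$ I obtain $x_0=p_{i}+\lambda w_{i}$, as desired.

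It remains to establish the claim, which I expect to be the main obstacle. Assuming $w_i\notin\ext_{d-2}(-\mcg{\gamma_i}{0}{1})$, Lemma~\ref{lem-extreme} provides an exposed $(d-1)$-face $F_i$ of $-\mcg{\gamma_i}{0}{1}$ with $w_i\in\ri(F_i)$; reflecting, $-F_i$ is an exposed $(d-1)$-face of $\mcg{\gamma_i}{0}{1}$, cut out by some $\phi_i\in X^\ast$ with $\gamma_i^\circ(\phi_i)=1$, \dah $\skpr{\phi_i}{y}=1$ for all $y\in -F_i$. The decisive geometric fact is that $\gamma_i$ coincides with the \emph{linear} functional $\phi_i$ on the cone $\cone(-F_i)$: indeed $\skpr{\phi_i}{y}=1=\gamma_i(y)$ on $-F_i\subseteq\bd(\mcg{\gamma_i}{0}{1})$, the inequality $\skpr{\phi_i}{y}\leq\gamma_i(y)$ from \eqref{eq:cauchy-schwarz} being tight there, and positive homogeneity propagates this equality to all of $\cone(-F_i)$. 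Because $-F_i$ is $(d-1)$-dimensional and avoids the origin, this cone is full-dimensional, and $p_i-x_0=\gamma_i(p_i-x_0)(-w_i)$ lies in its interior since $-w_i\in\ri(-F_i)$. Consequently $x\mapsto\gamma_i(p_i-x)=\skpr{\phi_i}{p_i-x}$ is affine on a whole neighbourhood of $x_0$, and summing over $i$ proves the claim. The subtle points requiring care are precisely that the cone is full-dimensional (so that affineness holds on an open subset of $X$, not merely along a lower-dimensional slice) and that the merely local one-sidedness of the convex set $f_{\leq\alpha}$ at $x_0$ upgrades to a genuine global supporting hyperplane, thereby producing an exposed face.
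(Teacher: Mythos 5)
Your proposal is correct and follows essentially the same route as the paper's proof: the same reduction to the case $x_0\neq p_i$, the same contradiction via Lemma~\ref{lem-extreme}, and the same key construction of the open cone over $\ri(-F_i)$ on which $\gamma_i$ agrees with a linear functional, making $f$ affine near $x_0$. The only difference is cosmetic: you spell out why an affine $f$ near $x_0$ forces $x_0\notin\ext_{d-2}(f_{\leq\alpha})$ (locally constant versus a genuine supporting hyperplane), a step the paper dispatches in one sentence.
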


\begin{proof}
The claim is trivial if $x_0=p_i$ for some $i \in \setn{1,\ldots,n}$, because then $x_0=p_i+0w$ for every $w \in\ext_{d-2}(-\mcg{\gamma_i}{0}{1})$. Hence we can assume that $x_0 \neq p_i$, $i\in\setn{1,\ldots,n}$. Putting $\lambda_i=\gamma_i(p_i-x_0)\in\RR_{++}$, we obtain
\begin{gather}
x_0=p_i+\lambda_i w_i \;\text{ with }\; \lambda_i\in\RR_{++},\; w_i \in -\msg{\gamma_i}{0}{1} \;\text{ for }\; i \in \setn{1,\ldots,n},
\label{sl1}\\
\sum_{i=1}^n \lambda_i = \alpha.\nonumber
\end{gather}

The proof is complete when we have shown that $w_i \in \ext_{d-2}(-\mcg{\gamma_i}{0}{1})$ for some $i$. Suppose that this is not the case. Then, for every $i \in \setn{1,\ldots,n}$, we have $w_i \in \ri(F_i)$ for some exposed $(d-1)$-face $F_i$ of $-\mcg{\gamma_i}{0}{1}$, according to Lemma~\ref{lem-extreme}. Denoting the corresponding supporting hyperplane by $H_i$, we have
\begin{equation}
w_i \in \ri(F_i)=\ri(H_i \cap (-\msg{\gamma_i}{0}{1})).\label{sl2}
\end{equation}
Since $F_i$ is of dimension $d-1$, the set
\begin{equation*}
K_i=\setcond{\lambda w}{w \in \ri(H_i \cap (-\msg{\gamma_i}{0}{1})), \lambda \in\RR_{++}}
\end{equation*}
is open in $X$. The restricted function $\gamma_i(-\,\cdot)|_{K_i}$ is linear, because
\begin{equation*}
\gamma_i(-\lambda w) \in \gamma_i(\lambda((-H_i) \cap \msg{\gamma_i}{0}{1})) \subseteq \lambda \gamma_i(\msg{\gamma_i}{0}{1})=\setn{\lambda}
\end{equation*}
for all $w \in \ri(H_i \cap (-\msg{\gamma_i}{0}{1}))$ and $\lambda\in\RR_{++}$. Formulas \eqref{sl1} and \eqref{sl2} and the linearity of $\gamma_i(-\,\cdot)|_{K_i}$ show that
$p_i+K_i$ is an open neighbourhood of $x_0$ and the restricted function $f_i|_{p_i+K_i}=\gamma_i(p_i-\,\cdot)|_{p_i+K_i}$ is affine.

Now it follows that $K=(p_1+K_1) \cap \ldots \cap (p_n+K_n)$ is an open neighbourhood of $x_0$ and that $f|_K=(\sum_{i=1}^n f_i|_{p_i+K_i})|_K$ is an affine function. However, boundary points from a level set of an affine function are never $(d-2)$-extreme. Therefore, $x_0 \notin \ext_{d-2}(f_{\leq \alpha})$. This contradiction completes the proof.\qed
\end{proof}

\begin{remark}
For the case $d=2$, Proposition~\ref{prop-sublevel} says that every extreme point $x_0$ of $f_{\leq \alpha}$ is of the form $x_0=p_i+\lambda w$ for some $i \in \setn{1,\ldots,n}$, $\lambda \in [0,\alpha]$, and an extreme point $w$ of $-\mcg{\gamma_i}{0}{1}$. This fails in general for spaces $X=\RR^d$ of dimensions $d>2$, as it is illustrated by the following example.

Let $X=\RR^3$, $n=3$, $p_1=(1,0,-1)$, $p_2=(0,-1,1)$, $p_3=(-1,1,0)$, let $\gamma_1=\gamma_2=\gamma_3=\mnorm{\cdot}_1$ be the norm given by $\mnorm{(\xi_1,\xi_2,\xi_3)}_1=\abs{\xi_1}+\abs{\xi_2}+\abs{\xi_3}$, and let $\alpha=6$. Then $f_{\leq 6}=\setn{(0,0,0)}$, because $f(0,0,0)=6$ and, for arbitrary $(\xi_1,\xi_2,\xi_3) \neq (0,0,0)$ with $\max\setn{\abs{\xi_1},\abs{\xi_2},\abs{\xi_3}} \leq 1$,
\begin{align*}
f(\xi_1,\xi_2,\xi_3) &=
(\abs{\xi_1-1}+\abs{\xi_2}+\abs{\xi_3+1})+(\abs{\xi_1}+\abs{\xi_2+1}+\abs{\xi_3-1})\\
& \qquad+(\abs{\xi_1+1}+\abs{\xi_2-1}+\abs{\xi_3})\\
&= (\abs{\xi_1-1}+\abs{\xi_1}+\abs{\xi_1+1})+(\abs{\xi_2-1}+\abs{\xi_2}+\abs{\xi_2+1})\\
& \qquad+(\abs{\xi_3-1}+\abs{\xi_3}+\abs{\xi_3+1})\\
&= ((1-\abs{\xi_1})+\abs{\xi_1}+(1+\abs{\xi_1}))+((1-\abs{\xi_2})+\abs{\xi_2}+(1+\abs{\xi_2}))\\
& \qquad+((1-\abs{\xi_3})+\abs{\xi_3}+(1+\abs{\xi_3}))\\
&= 6+3\mnorm{(\xi_1,\xi_2,\xi_3)}_1\\
&> 6.
\end{align*}
Hence $x_0=(0,0,0)$ is an extreme point of $f_{\leq 6}$. But one easily checks that $x_0$ does not admit a representation $x_0=p_i+\lambda w$ with $i \in \setn{1,2,3}$, $\lambda \in\RR_+$, and an extreme point $w$ of $-\mcg{\gamma_i}{0}{1}$.
\end{remark}

\section{Finitely Many Convex Sets in Generalized Minkowski Spaces}
\label{chap:sets_minkowski}
We give a generalization of Theorem~\ref{thm:fermat-torricelli} regarding Problem \eqref{eq:fermat-torricelli}. Namely, we replace the given point set $\setn{p_1,\ldots,p_n}$ by a collection of convex sets $K_1,\ldots,K_n$. Distance measurement is then provided by the so-called distance functions related to convex sets and gauges.
\begin{definition}The \emph{distance function of $K\subseteq X$ with respect to the gauge $\gamma:X\to \RR$} is defined as $\dist_\gamma(\cdot,K):X\to\cRR$ by
\begin{equation*}
\dist_\gamma(x,K)=\inf\setcond{\gamma(y-x)}{y\in K}.
\end{equation*}
The \emph{metric projection onto $K$ with respect to $\gamma$} is the set-valued operator $\Proj_\gamma(\cdot,K):X\rightrightarrows X$,
\begin{equation*}
\Proj_\gamma(x,K)=\setcond{y\in K}{\gamma(y-x)=\dist_\gamma(x,K)}.
\end{equation*}
If the dependence on $\gamma$ is clear from the context, we may omit it from the notation.
\end{definition}
Now let $K_1,\ldots,K_n\subseteq X$ be non-empty closed convex sets, and let $\gamma_1,\ldots,\gamma_n$ be gauges on $X$. Consider the convex optimization problem
\begin{equation}
\inf_{x\in X}\sum_{i=1}^n \dist_{\gamma_i}(x,K_i).\label{eq:fermat-torricelli_general}
\end{equation}

We start the discussion of \eqref{eq:fermat-torricelli_general} with an analogue of Proposition~\ref{prop:ft_compact_convex}. As above, we call the set of all minimizers of \eqref{eq:fermat-torricelli_general} the \emph{Fermat--Torricelli locus} of \eqref{eq:fermat-torricelli_general}. A particular emphasis is on the case where all the sets $K_i$ are affine flats (\dah affine subspaces) in $X$.
\begin{proposition}\label{prop-locus_general}
\begin{enumerate}[label={(\alph*)},leftmargin=0.3cm,align=left]
\item{The Fermat--Torricelli locus of \eqref{eq:fermat-torricelli_general} is closed and convex.\label{locus_closed_convex}}
\item{If one of the sets $K_i$, $i\in\setn{1,\ldots,n}$, is bounded, then the Fermat--Torricelli locus of \eqref{eq:fermat-torricelli_general} is bounded and non-empty.\label{one_bounded}}
\item{If all the sets $K_i$, $i\in\setn{1,\ldots,n}$, are affine flats, then the Fermat--Torricelli locus of \eqref{eq:fermat-torricelli_general} is non-empty. Moreover, 
it is a Minkowski sum of a closed, bounded, convex set and a linear space that may be degenerated to $\setn{0}$.\label{all_flats}}
\end{enumerate}
\end{proposition}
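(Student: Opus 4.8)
The plan is to handle all three parts through one structural observation: each summand $f_i\defeq\dist_{\gamma_i}(\cdot,K_i)$ is a finite, convex, and continuous function, so that $f=\sum_{i=1}^n f_i$ inherits these properties. Finiteness holds because $K_i\neq\emptyset$ and $\gamma_i$ is real-valued; convexity holds because $f_i$ is the infimal convolution of the opposite gauge $\tilde{\gamma}_i$ (which is convex, being a gauge) with the indicator function of the convex set $K_i$; and continuity follows from the triangle inequality, which yields the Lipschitz-type estimate $\abs{f_i(x)-f_i(x')}\leq\max\setn{\gamma_i(x-x'),\gamma_i(x'-x)}$. Granting this, part~\ref{locus_closed_convex} is immediate: the locus is the lower level set of the convex continuous function $f$ at its infimal value, hence convex and closed (this conclusion holds trivially even when the locus is empty).

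For part~\ref{one_bounded} I would prove coercivity of $f$. If $K_j$ is bounded, put $M_j\defeq\sup\setcond{\gamma_j(-y)}{y\in K_j}<+\infty$; subadditivity applied to $-x=(y-x)+(-y)$ gives $\gamma_j(-x)\leq\gamma_j(y-x)+\gamma_j(-y)$ for every $y\in K_j$, whence $f_j(x)\geq\gamma_j(-x)-M_j$ after taking the infimum over $y$. Since the opposite gauge $\tilde{\gamma}_j$ is itself a gauge, $\gamma_j(-x)=\tilde{\gamma}_j(x)$ tends to $+\infty$ as $x$ leaves every bounded set, and therefore $f(x)\geq f_j(x)\to+\infty$. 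I would then argue exactly as in Proposition~\ref{prop:ft_compact_convex}: with $\alpha=\inf f$, the set $\setcond{x\in X}{f(x)\leq\alpha+1}$ is bounded by coercivity and closed by continuity, so $f$ attains its minimum on it; the locus is then a non-empty and bounded subset of this set.

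Part~\ref{all_flats} is the heart of the statement and the main obstacle, because for affine flats $K_i=a_i+L_i$ the function $f$ need not be coercive: parallel flats leave a common direction free. The key reduction is to rewrite $f_i(x)=D_i(x-a_i)$ with $D_i(w)\defeq\inf\setcond{\gamma_i(u-w)}{u\in L_i}$ and to verify that $D_i$ is \emph{sublinear} --- positively homogeneous and subadditive, using $\lambda L_i=L_i$ for $\lambda>0$ and $L_i+L_i=L_i$ --- and that $D_i(w)=0$ holds if and only if $w\in L_i$ (as $L_i$ is closed). It follows that $D_i$ is finite, convex, and continuous, and that $f$ is invariant under translations by the subspace $L\defeq\bigcap_{i=1}^n L_i$. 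I would then fix a complement $M$ with $X=L\oplus M$ and show that $f|_M$ is coercive by contradiction: if $x_k\in M$ satisfy $\mnorm{x_k}\to\infty$ while $f(x_k)$ stays bounded, then normalising $v_k=x_k/\mnorm{x_k}$ and passing to a subsequence with $v_k\to v\in M$, the inequalities $D_i(x_k)\leq f_i(x_k)+D_i(a_i)$ combined with positive homogeneity give $D_i(v)=\lim_k D_i(v_k)=0$ for every $i$; hence $v\in L\cap M=\setn{0}$, contradicting $\mnorm{v}=1$.

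Coercivity of the continuous convex function $f|_M$ guarantees that the set $C$ of its minimizers is non-empty, closed, bounded, and convex. Writing $\pi_M$ for the projection of $X$ onto $M$ along $L$, the $L$-invariance gives $f(x)=f(\pi_M(x))$, so $x$ minimises $f$ if and only if $\pi_M(x)\in C$; consequently the locus equals $C+L$, the Minkowski sum of a closed bounded convex set and the linear space $L$, which degenerates to $C+\setn{0}$ precisely when $\bigcap_{i=1}^n L_i=\setn{0}$. The only steps requiring care are the sublinearity and the vanishing characterisation of $D_i$, together with the compactness argument producing the convergent subsequence $v_k\to v$; both are routine in finite dimension.
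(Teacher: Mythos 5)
Your proof is correct. Parts (a) and (b) follow the paper's route up to a cosmetic difference: the paper confines all minimizers to the compact sublevel set $\setcond{x\in X}{\dist_{\gamma_{i_0}}(x,K_{i_0})\leq\alpha}$ of the single summand whose set $K_{i_0}$ is bounded, while you prove coercivity of $f$ itself via the lower bound $f_j(x)\geq\tilde{\gamma}_j(x)-M_j$; the two arguments are interchangeable. Part (c) is where you take a genuinely different route. The paper argues by induction on $\dim X$: from an unbounded minimizing sequence it extracts a single direction $x_0$ leaving every $K_i$ invariant, splits $X=\bar{X}\oplus\lin\setn{x_0}$, verifies that the quotient functions $\bar{\gamma}_i(\bar{x})=\inf_{\lambda\in\RR}\gamma_i(\bar{x}+\lambda x_0)$ are again gauges, and lifts the locus of the resulting $(d-1)$-dimensional problem back by adding $\lin\setn{x_0}$. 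You instead identify the whole invariance subspace $L=\bigcap_{i=1}^n L_i$ at once, reduce each summand to the sublinear function $D_i=\dist_{\gamma_i}(\cdot,L_i)$, and prove coercivity of $f$ on a complement $M$ of $L$ by the normalization--compactness argument, obtaining the locus as $C+L$ in a single step. Your version trades the induction and the verification that the $\bar{\gamma}_i$ are gauges for the (routine) sublinearity and vanishing lemma for $D_i$; as a bonus it exhibits the linear summand explicitly as $\bigcap_{i=1}^n L_i$ (and since $C$ is bounded this is exactly the lineality space of the locus), whereas the paper's induction only produces some unspecified subspace $V$. Both proofs are complete.
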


\begin{proof}
Claim \ref{locus_closed_convex} holds, since the solution set of \eqref{eq:fermat-torricelli_general} is a sublevel set of a bounded below, convex, and continuous function $f=\sum_{i=1}^n \dist_{\gamma_i}(\cdot,K_i)$.

Statement \ref{one_bounded} can be proved in a similar way as \cite[Proposition~4.1(i)]{MordukhovichNa2011b}: Suppose that $K_{i_0}$ is bounded. If $\alpha=\inf_{x \in X} f(x)$ then all
minimizers of $f$ are contained in the sublevel set $H=\setcond{x \in X}{\dist_{\gamma_{i_0}}(x,K_{i_0}) \leq \alpha}$ of $\dist_{\gamma_{i_0}}(\cdot,K_{i_0})$. Problem \eqref{eq:fermat-torricelli_general} has a bounded and non-empty set of minimizers, since they are obtained by minimizing the continuous function $f$ over the non-empty compact set $H$.

For \ref{all_flats}, we proceed by induction on $d=\dim(X)$. Statement \ref{all_flats} is a simple consequence of \ref{one_bounded} if $d=1$. 

Now suppose that $d \geq 2$ and, contrary to our claim, all sets $K_i$ are affine flats, but either $f$ has no minimizer in $X$ (Case (A)) or the Fermat--Torricelli locus is non-empty and cannot be represented as it is claimed under \ref{all_flats} (Case (B)). Note that the locus must be unbounded in the latter case.

We fix a norm $\mnorm{\cdot}$ on $X$. There exists a sequence $(x_k)_{k=1}^\infty \subseteq X$ such that 
\begin{equation*}
\lim_{k \to \infty} f(x_k)= \inf_{x \in X} f(x)= \alpha \quad\text{ and }\quad \lim_{k \to \infty} \mnorm{x_k}= \infty.
\end{equation*}
To see that in Case (A), pick 
$(x_k)_{k=1}^\infty$ such that $\lim_{k \to \infty} f(x_k)= \alpha$. Then necessarily $\lim_{k \to \infty} \mnorm{x_k}= \infty$, since otherwise $(x_k)_{k=1}^\infty$ had a bounded subsequence converging to a minimizer of $f$. In Case (B), we pick an arbitrary unbounded sequence $(x_k)_{k=1}^\infty$ in the Fermat--Torricelli locus.

We can assume that $\lim_{k \to \infty} \frac{x_k}{\mnorm{x_k}}=x_0 \in \msg{\mnorm{\cdot}}{0}{1}$, since $\left(\frac{x_k}{\mnorm{x_k}}\right)_{k=1}^\infty$ is contained in the compact set $\msg{\mnorm{\cdot}}{0}{1}$. The property $\lim_{k \to \infty} f(x_k)=\alpha$ implies boundedness of the set $\setcond{f(x_k)}{k \geq 1}$ and in turn of each of the sets $\setcond{\dist_{\gamma_i}(x_k,K_i)}{k \geq 1}$ for $i\in\setn{1,\ldots,n}$. Denoting the linear subspace associated to $K_i$ by $V_i$ (\dah $K_i=V_i+x$ for every $x \in K_i$), we conclude that $\setcond{\dist_{\gamma_i}(x_k,V_i)}{k \geq 1}$ is bounded from above by some $C_i \in\RR_{++}$. By the linearity of $V_i$, we get
\begin{equation*}
\dist_{\gamma_i}\lr{\frac{x_k}{\mnorm{x_k}}, V_i}=\frac{1}{\mnorm{x_k}}\dist_{\gamma_i}(x_k, V_i) \leq \frac{C_i}{\mnorm{x_k}}
\end{equation*}
for $k \geq 1$. Letting $k \to \infty$, we obtain $\dist_{\gamma_i}(x_0,V_i)=0$, \dah $x_0 \in V_i$, and 
\begin{equation}
K_i= K_i+\lambda x_0 \quad\text{ for }\quad \lambda \in \RR,\, i\in\setn{1,\ldots,n}.\label{eq-invariance}
\end{equation}

We express $X$ as a direct sum 
\begin{equation}
X=\bar{X} \oplus \lin\setn{x_0}\label{eq-sum_representation}
\end{equation}
with a linear subspace $\bar{X} \subseteq X$ of dimension $\dim(\bar{X})=\dim(X)-1=d-1$. For $\bar{x} \in \bar{X}$ and $i \in \setn{1,\ldots,n}$, let
\begin{equation*}
\bar{\gamma}_i(\bar{x})= \inf\setcond{\gamma_i(\bar{x}+\lambda x_0)}{\lambda \in \RR}.
\end{equation*}
Then every $\bar{\gamma}_i$ turns out to be a gauge on $\bar{X}$ (see \cite[Proposition~(3.1)]{AlegreFe2007}). Indeed, $\bar{\gamma}_i(\bar{x})=0$ is equivalent to $\dist_{\tilde{\gamma}_i}(\bar{x},\lin\setn{x_0})=0$, \dah to $\bar{x} \in \lin\setn{x_0}$. By \eqref{eq-sum_representation}, this gives $\bar{x}=0$, and part \ref{linefree} of Definition~\ref{def-gauge} is verified. Parts \ref{homogeneous} and \ref{subadditive} follow easily from the respective properties of $\gamma_i$.

From \eqref{eq-invariance} and \eqref{eq-sum_representation}, we obtain
\begin{equation*}
K_i= \bar{K}_i+ \lin\setn{x_0}, \quad\text{ where }\quad \bar{K}_i=K_i \cap \bar{X},
\end{equation*}
for $i \in \setn{1,\ldots,n}$. We define an optimization problem on $\bar{X}$ by
\begin{equation}
\inf_{\bar{x} \in \bar{X}} \bar{f}(\bar{x}), \quad\text{ where }\quad \bar{f}(\bar{x})= \sum_{i=1}^n \dist_{\bar{\gamma}_i}(\bar{x},\bar{K}_i).\label{eq-new_problem}
\end{equation}
For $\bar{x} \in \bar{X}$ and $i\in\setn{1,\ldots,n}$,
\begin{align*}
\dist_{\bar{\gamma}_i}(\bar{x},\bar{K}_i) 
&= \inf\setcond{\bar{\gamma}_i(\bar{y}-\bar{x})}{\bar{y} \in \bar{K}_i}\\
&= \inf\setcond{\inf\setcond{\gamma_i(\bar{y}-\bar{x}+\lambda x_0)}{\lambda \in \RR}}{\bar{y} \in \bar{K}_i}\\
&= \inf\setcond{\gamma_i((\bar{y}+\lambda{x_0})-\bar{x})}{\bar{y} \in \bar{K}_i,\lambda \in \RR}\\
&= \inf\setcond{\gamma_i(y-\bar{x})}{y \in K_i}\\
&= \dist_{\gamma_i}(\bar{x},K_i).
\end{align*}
This gives the identity $\bar{f}(\bar{x})=f(\bar{x})$ for all $\bar{x} \in \bar{X}$. Moreover, \eqref{eq-invariance} shows that $f(x)=f(x+\lambda x_0)$ for all $x \in X$, $\lambda \in \RR$. So
\begin{equation*}
\bar{f}(\bar{x})=f(\bar{x}+\lambda x_0) \quad\text{ for }\quad \bar{x} \in \bar{X},\, \lambda \in \RR.
\end{equation*}
Now we see that $\bar{f}$ and $f$ attain the same values. Moreover, the Fermat--Torricelli loci $\bar{F}$ and $F$ of \eqref{eq-new_problem} and \eqref{eq:fermat-torricelli_general}, respectively, are related by
\begin{equation*}
F=\bar{F}+\lin\setn{x_0}.
\end{equation*}
However, the induction hypothesis tells us that $\bar{F}=\bar{K}+\bar{V}$, where $\bar{K} \subseteq \bar{X}$ is non-empty, closed, bounded, and convex and $\bar{V}$ is a linear subspace of $\bar{X}$. Then $F=\bar{K}+V$, where $V=\bar{V}+\lin\setn{x_0}$ is a subspace of $X$, and the proof is complete.\qed
\end{proof}

\begin{example}
\begin{enumerate}[label={(\alph*)},leftmargin=0.3cm,align=left]
\item{The Fermat--Torricelli locus of \eqref{eq:fermat-torricelli_general} can be empty. For example, consider $n=2$ sets 
\begin{equation*}
K_1=\setcond{(\xi_1,0)}{\xi_1 \in \RR}\text{ and }K_2=\setcond{(\xi_1,\xi_2) \in \RR}{\xi_1 \in\RR_{++}, \xi_2 \geq \frac{1}{\xi_1}}
\end{equation*}
in $\RR^2$ equipped with arbitrary gauges $\gamma_1,\gamma_2$.}
\item{The Fermat--Torricelli locus of \eqref{eq:fermat-torricelli_general} can be unbounded, even if all sets $K_i$ are affine flats. For example, this appears if $n=2$ and $K_1$ and $K_2$ are two parallel
straight lines in $\RR^2$ equipped with arbitrary gauges $\gamma_1,\gamma_2$. If $\gamma_1=\gamma_2$ is a norm, then the Fermat--Torricelli locus is the complete strip between $K_1$ and $K_2$, which shows that the Fermat--Torricelli locus is not necessarily an affine flat if all sets $K_i$ are affine flats.}
\end{enumerate}
\end{example}

In order to give an optimality condition for \eqref{eq:fermat-torricelli_general}, we compute the conjugate and the subdifferential of the function $\dist_{\gamma}(\cdot,K)$, where $\gamma$ is a gauge on $X$ and $K$ is a non-empty closed convex set.
Using the \emph{indicator function} $\delta(\cdot,K):X\to \cRR$ of $K$ with
\begin{equation*}
\delta(x,K)=\begin{cases}0,&x\in K,\\+\infty,&x\notin K,\end{cases}
\end{equation*}
we have $\dist_\gamma(x,K)=\inf\setcond{\tilde{\gamma}(x-y)+\delta(y,K)}{y\in X}= (\tilde{\gamma}\boxx \delta(\cdot,K))(x)$. (Here $(f \boxx g)(x)\defeq\inf\setcond{f(x-y)+g(y)}{y \in X}$ is the convolution of the functions $f,g: X \to \cRR$; see, \zB \cite[p.~167]{BauschkeCo2011} and \cite[p.~43]{Zalinescu2002}.) By \cite[Proposition~13.21(i)]{BauschkeCo2011},
\begin{equation}
\dist_\gamma(\cdot,K)^\ast=(\tilde{\gamma}\boxx \delta(\cdot,K))^\ast=\tilde{\gamma}^\ast+\delta(\cdot,K)^\ast= \delta(\cdot,-\mc{0}{1}^\circ)+h(\cdot,K).\label{eq-dist_star}
\end{equation}
The subdifferential can be computed via \cite[Theorem~2.4.2(iii)]{Zalinescu2002}. Namely,
\begin{align}
&\norel\p \dist_\gamma(\cdot,K)(x)\nonumber\\
&=\setcond{\phi \in X^\ast}{\dist_\gamma(\cdot,K)^\ast(\phi)+\dist_\gamma(x,K)=\skpr{\phi}{x}} \nonumber\\
&=\setcond{\phi \in X^\ast}{\delta(\phi,-\mc{0}{1}^\circ)+h(\phi,K)+\dist_\gamma(x,K)=\skpr{\phi}{x}} \nonumber\\
&=\setcond{\phi \in X^\ast}{\tilde{\gamma}^\circ(\phi)\leq 1, h(\phi,K)+\dist_\gamma(x,K)=\skpr{\phi}{x}} \label{eq-d_dist_4}\\
&= -\mc{0}{1}^\circ \cap \setcond{\phi \in X^\ast}{\dist_\gamma(x,K)=\inf_{y\in K}\skpr{\phi}{x-y}}. \label{eq-d_dist_2}
\end{align}
Now we are able to formulate an optimality condition for Problem $\eqref{eq:fermat-torricelli_general}$.
\begin{theorem}\label{thm:fermat-torricelli_general}
Let $K_1,\ldots,K_n\subseteq X$ be non-empty closed convex sets, and let $\gamma_1,\ldots,\gamma_n$ be gauges on $X$. Then $\bar{x}\in X$ is a minimizer of the function $f=\sum_{i=1}^n\dist_{\gamma_i}(\cdot,K_i):X\to\RR$ if and only if there exist $\phi_1,\ldots,\phi_n\in X^\ast$ with $\gamma_i^\circ(-\phi_i)\leq 1$ and $\dist_{\gamma_i}(\bar{x},K_i)=\inf_{y\in K_i}\skpr{\phi_i}{\bar{x}-y}$ such that $\sum_{i=1}^n \phi_i=0$.
\end{theorem}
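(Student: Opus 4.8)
The plan is to deduce the theorem from the Fermat optimality rule for convex functions combined with the subdifferential sum rule, feeding in the subdifferential formula \eqref{eq-d_dist_2} for the distance functions $\dist_{\gamma_i}(\cdot,K_i)$ that was computed just above the statement. This mirrors exactly the proof of Theorem~\ref{thm:fermat-torricelli}, where $\gamma(p_i-\cdot)$ is now replaced by $\dist_{\gamma_i}(\cdot,K_i)$ and the subdifferential of the norm term is replaced by \eqref{eq-d_dist_2}.

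First I would record that each summand $\dist_{\gamma_i}(\cdot,K_i)$ is finite, convex, and continuous on $X$: finiteness holds because $K_i\neq\emptyset$, convexity was already used in deriving \eqref{eq-dist_star}, and a finite convex function on the finite-dimensional space $X$ is automatically continuous. Hence $f=\sum_{i=1}^n\dist_{\gamma_i}(\cdot,K_i)$ is finite and convex, so $\bar{x}$ is a minimizer of $f$ if and only if $0\in\p f(\bar{x})$. The key step is then the sum rule: since all summands are real-valued and convex, the subdifferential of the sum is the Minkowski sum of the subdifferentials, exactly as invoked for Theorem~\ref{thm:fermat-torricelli} via \cite[Corollary~16.39]{BauschkeCo2011}, giving
\begin{equation*}
\p f(\bar{x})=\sum_{i=1}^n \p \dist_{\gamma_i}(\cdot,K_i)(\bar{x}).
\end{equation*}
Therefore $0\in\p f(\bar{x})$ if and only if one can choose $\phi_i\in\p\dist_{\gamma_i}(\cdot,K_i)(\bar{x})$ for each $i$ with $\sum_{i=1}^n\phi_i=0$. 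Substituting the explicit description \eqref{eq-d_dist_2}, the membership $\phi_i\in\p\dist_{\gamma_i}(\cdot,K_i)(\bar{x})$ amounts precisely to the two conditions $\phi_i\in-\mcg{\gamma_i}{0}{1}^\circ$ and $\dist_{\gamma_i}(\bar{x},K_i)=\inf_{y\in K_i}\skpr{\phi_i}{\bar{x}-y}$. The first of these is $\gamma_i^\circ(-\phi_i)\leq 1$, which follows from the polarity identity $(-\mcg{\gamma_i}{0}{1})^\circ=-\mcg{\gamma_i}{0}{1}^\circ$ together with $(\tilde{\gamma}_i)^\circ=(\gamma_i^\circ)\tilde{\;}$, since $\phi\in(-\mcg{\gamma_i}{0}{1})^\circ$ means $\tilde{\gamma}_i^\circ(\phi)=\gamma_i^\circ(-\phi)\leq 1$. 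This is exactly the asserted equivalence.

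I expect the only genuine subtlety to be the justification of the sum rule, which in general requires a constraint qualification relating the effective domains. Here the difficulty evaporates: because every $\dist_{\gamma_i}(\cdot,K_i)$ is finite, hence continuous, on all of $X$, the qualification is satisfied automatically, and the additivity of the subdifferential holds globally without any hypothesis on the relative positions of the sets $K_i$. Everything else is a direct transcription of the data already assembled in \eqref{eq-dist_star}--\eqref{eq-d_dist_2}. As in Theorem~\ref{thm:fermat-torricelli}, one could alternatively verify the ``$\Leftarrow$'' direction by hand through the Cauchy--Schwarz-like inequalities \eqref{eq:cauchy-schwarz}, but the subdifferential argument is the cleaner route given the machinery now available.
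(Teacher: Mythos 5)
Your proposal is correct and follows essentially the same route as the paper: the paper's own proof is the one-line remark that both conditions are equivalent to $0 \in \p f(\bar{x})$, relying on the subdifferential formula \eqref{eq-d_dist_2} and the sum rule exactly as you spell out. Your additional details (continuity of the finite convex summands, the identification of $-\mcg{\gamma_i}{0}{1}^\circ$ with $\setcond{\phi}{\gamma_i^\circ(-\phi)\leq 1}$) correctly fill in what the paper leaves implicit.
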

\begin{proof}
Both conditions are equivalent to $0 \in \p f(\bar{x})$.\qed
\end{proof}
Note that there is no need for a positively weighted version of Problem~\eqref{eq:fermat-torricelli_general} and Theorem~\ref{thm:fermat-torricelli_general}, since $w\gamma$ is a gauge if $\gamma$ is a gauge and $w\in \RR_{++}$. The theorem also covers the cases $n=0$ (where $f \equiv 0$ and the optimality condition is empty) and $n=1$. The sets $K_i$ are not necessarily different nor disjoint. A special case of Theorem~\ref{thm:fermat-torricelli_general} is given in \cite{Plastria1992b}, where every gauge $\gamma_i$ is a so-called \emph{skewed norm}, \dah its dual unit ball $\mcg{\gamma_i^\circ}{0}{1}$ admits a center of symmetry.

We come to a restricted version of Problem \eqref{eq:fermat-torricelli_general}, which is also called \emph{generalized Heron problem} in the literature (see \cite{Ghandehari1997}, \cite{MordukhovichNaSa2012b}, \cite{MordukhovichNaSa2012a}, and, strongly related, \cite{GhandehariGo2000}). Let $K_0,K_1,\ldots,K_n\subseteq X$ be non-empty closed convex sets, and let $\gamma_1,\ldots,\gamma_n$ be gauges on $X$. Consider
\begin{equation}
\inf_{x\in K_0}\;\sum_{i=1}^n \dist_{\gamma_i}(x,K_i).\label{eq:heron}
\end{equation}
The existence of an optimal solution for \eqref{eq:heron} can be shown as in \cite[Proposition~3.1]{MordukhovichNaSa2012b} if one of the sets $K_i$, $i\in\setn{0,\ldots,n}$, is bounded. This optimization problem can be rewritten as
\begin{align*}
\inf_{x\in X}&\;\sum_{i=1}^n \dist_{\gamma_i}(x,K_i)+\delta(x,K_0).
\end{align*}
For deducing an optimality condition, we note that the second set in \eqref{eq-d_dist_2} coincides in the case $x \in K$ with
\begin{align}
&\norel\setcond{\phi \in X^\ast}{\dist_\gamma(x,K)=\inf_{y\in K}\skpr{\phi}{x-y}}\nonumber\\
&=\setcond{\phi \in X^\ast}{0=\inf_{y\in K}\skpr{\phi}{x-y}} \nonumber \\
&=\setcond{\phi \in X^\ast}{0=\sup_{y\in K}\skpr{\phi}{y-x}} \nonumber \\
&=\setcond{\phi \in X^\ast}{0\geq\skpr{\phi}{y-x}\fall y\in K} \nonumber \\
&=\nor(x,K), \label{eq-d_dist_3}
\end{align}
the \emph{normal cone} of $K$ at $x$ (see \cite[p.~70]{Schneider1993}). In particular, by \eqref{eq-d_dist_2} we have
\begin{equation*}
\p \dist_\gamma(\cdot,K)(x)= -\mc{0}{1}^\circ \cap \nor(x,K) \;\text{ for }\; x \in K
\end{equation*}
(see \cite[Example~16.49]{BauschkeCo2011} for the case that $X$ is Euclidean). This formula is a finite-di\-men\-sional special case of formula (17) in \cite{MordukhovichNa2011b}.

\begin{theorem}[{see \cite{MordukhovichNaSa2012b}}]
Let $K_0,\ldots,K_n\subseteq X$ be non-empty closed convex sets, and let $\gamma_1,\ldots,\gamma_n$ be gauges on $X$. A point $\bar{x}\in K_0$ is a minimizer of the function $f=\sum_{i=1}^n\dist_{\gamma_i}(\cdot,K_i)+\delta(\cdot,K_0):X\to\cRR$ if and only if there exist functionals $\phi_0\in\nor(\bar{x},K_0)$ and $\phi_i \in X^\ast$ satisfying $\gamma_i^\circ(-\phi_i)\leq 1$ and $\dist_{\gamma_i}(\bar{x},K_i)=\inf_{y\in K_i}\skpr{\phi_i}{\bar{x}-y}$, $i \in \setn{1,\ldots,n}$, such that $\sum_{i=0}^n \phi_i=0$.
\end{theorem}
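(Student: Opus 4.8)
The plan is to argue exactly as for Theorem~\ref{thm:fermat-torricelli_general}, reducing the assertion to the subdifferential condition $0\in\p f(\bar{x})$. The function $f=\sum_{i=1}^n\dist_{\gamma_i}(\cdot,K_i)+\delta(\cdot,K_0)$ is proper and convex: each distance function is finite-valued and convex, and $\delta(\cdot,K_0)$ is convex because $K_0$ is convex. Since $f(\bar{x})\in\RR$ forces $\bar{x}\in K_0$, Fermat's rule for convex functions gives that $\bar{x}\in K_0$ is a minimizer of $f$ if and only if $0\in\p f(\bar{x})$.

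The second step is the sum rule for the subdifferential. The finite summand $\sum_{i=1}^n\dist_{\gamma_i}(\cdot,K_i)$ is real-valued and continuous on all of $X$, whereas $\delta(\cdot,K_0)$ is merely proper; continuity of the finite summand at the point $\bar{x}\in K_0=\dom\delta(\cdot,K_0)$ is the required constraint qualification, so the Moreau--Rockafellar theorem (the same additivity of subdifferentials invoked in the proof of Theorem~\ref{thm:fermat-torricelli}, now admitting the indicator term) yields
\begin{equation*}
\p f(\bar{x})=\sum_{i=1}^n\p\dist_{\gamma_i}(\cdot,K_i)(\bar{x})+\p\delta(\cdot,K_0)(\bar{x}).
\end{equation*}

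It remains to read off the individual subdifferentials. For the indicator term, a direct check from the definition of the subdifferential shows $\p\delta(\cdot,K_0)(\bar{x})=\nor(\bar{x},K_0)$, since for $\bar{x}\in K_0$ the defining inequality $\delta(y,K_0)\geq\skpr{\phi_0}{y-\bar{x}}$ is automatic when $y\notin K_0$ and reduces to $\skpr{\phi_0}{y-\bar{x}}\leq 0$ when $y\in K_0$; this is precisely the computation already recorded in \eqref{eq-d_dist_3}. For the distance terms, formula~\eqref{eq-d_dist_4} states that $\phi_i\in\p\dist_{\gamma_i}(\cdot,K_i)(\bar{x})$ exactly when $\tilde{\gamma}_i^\circ(\phi_i)\leq 1$ and $\dist_{\gamma_i}(\bar{x},K_i)=\inf_{y\in K_i}\skpr{\phi_i}{\bar{x}-y}$; the polarity identity $(\tilde{\gamma}_i)^\circ=(\gamma_i^\circ)\tilde{\;}$ established earlier rewrites the first condition as $\gamma_i^\circ(-\phi_i)\leq 1$.

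Substituting these descriptions, $0\in\p f(\bar{x})$ becomes the existence of $\phi_0\in\nor(\bar{x},K_0)$ and $\phi_1,\ldots,\phi_n\in X^\ast$ with $\gamma_i^\circ(-\phi_i)\leq 1$ and $\dist_{\gamma_i}(\bar{x},K_i)=\inf_{y\in K_i}\skpr{\phi_i}{\bar{x}-y}$ satisfying $\sum_{i=0}^n\phi_i=0$, which is exactly the claimed optimality condition. I expect the sum rule involving the indicator function to be the only delicate point, namely the verification of the constraint qualification; once additivity of the subdifferential is in force, the remainder is bookkeeping with the previously computed formula~\eqref{eq-d_dist_4}, the normal-cone identity~\eqref{eq-d_dist_3}, and the opposite-gauge polarity.
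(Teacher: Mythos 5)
Your proposal is correct and follows essentially the same route as the paper: the paper's proof likewise reduces both conditions to $0\in\p f(\bar{x})$, invokes the additivity of the subdifferential, and reads off the summands via the formulas \eqref{eq-d_dist_2}/\eqref{eq-d_dist_4} and the normal-cone identity \eqref{eq-d_dist_3}. You merely spell out the constraint qualification for the sum rule and the polarity rewriting $\tilde{\gamma}_i^\circ(\phi_i)=\gamma_i^\circ(-\phi_i)$ in more detail than the paper does.
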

\begin{proof}
Both conditions are equivalent to $0 \in \p f(\bar{x})$, see \eqref{eq-d_dist_2}, \eqref{eq-d_dist_3}, and \cite[Corollary~16.39]{BauschkeCo2011} for the additivity of the subdifferential.\qed
\end{proof}

Now we come to interesting particular results.

\begin{proposition}
For every closed convex cone $C\subseteq X$ apexed at $0$ in a generalized Minkowski space $(X,\gamma)$, we have $\dist(\cdot,C)=h(\cdot,C^\circ\cap (-\mc{0}{1}^\circ))$.
\end{proposition}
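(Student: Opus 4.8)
The plan is to prove the identity $\dist_\gamma(\cdot,C)=h(\cdot,C^\circ\cap(-\mcg{\gamma}{0}{1}^\circ))$ by starting from the conjugate-function machinery already assembled in \eqref{eq-dist_star}. Since $\dist_\gamma(\cdot,C)$ is a closed, convex, positively homogeneous function, it equals the support function of the set on which its conjugate vanishes; concretely, for a sublinear function the biconjugate theorem gives $\dist_\gamma(\cdot,C)=h(\cdot,D)$ where $D=\setcond{\phi\in X^\ast}{\dist_\gamma(\cdot,C)^\ast(\phi)=0}$ is the effective domain of the conjugate (a conjugate of a sublinear function is the indicator of a closed convex set). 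The whole proof then reduces to identifying this domain $D$ with $C^\circ\cap(-\mcg{\gamma}{0}{1}^\circ)$.

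To carry this out, I would first record from \eqref{eq-dist_star} that $\dist_\gamma(\cdot,C)^\ast=\delta(\cdot,-\mcg{\gamma}{0}{1}^\circ)+h(\cdot,C)$. Thus $\dist_\gamma(\cdot,C)^\ast(\phi)=0$ forces $\phi\in-\mcg{\gamma}{0}{1}^\circ$ (otherwise the indicator term is $+\infty$) together with $h(\phi,C)=0$. The key step is then to identify the condition $h(\phi,C)=0$ for the cone $C$: since $C$ is a cone apexed at $0$, its support function $h(\phi,C)=\sup\setcond{\skpr{\phi}{x}}{x\in C}$ takes only the values $0$ and $+\infty$. It equals $0$ precisely when $\skpr{\phi}{x}\leq 0$ for all $x\in C$, \dah when $\phi$ lies in the polar cone of $C$; and for a cone apexed at the origin this polar cone coincides with the polar set $C^\circ=\setcond{\phi}{h(\phi,C)\leq 1}$, because the scaling $x\mapsto\lambda x$ keeps $C$ invariant and collapses the threshold $1$ to $0$. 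Combining the two constraints yields $D=C^\circ\cap(-\mcg{\gamma}{0}{1}^\circ)$, which is exactly the claimed set.

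Having identified $D$, it remains to justify the passage $\dist_\gamma(\cdot,C)=h(\cdot,D)$. Here I would invoke that $\dist_\gamma(\cdot,C)$ is finite-valued, convex, and lower semicontinuous (indeed continuous, as in Proposition~\ref{prop-locus_general}), so by the Fenchel--Moreau theorem it equals its biconjugate $\dist_\gamma(\cdot,C)^{\ast\ast}$. Because the conjugate $\dist_\gamma(\cdot,C)^\ast=\delta(\cdot,D)$ is the indicator of the closed convex set $D$, its conjugate is the support function $h(\cdot,D)$, giving the identity.

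The main obstacle I anticipate is the cone-specific step $h(\phi,C)=0\Leftrightarrow\phi\in C^\circ$, \dah showing that for a cone apexed at $0$ the polar \emph{set} and the polar \emph{cone} agree and that the support function is $\{0,+\infty\}$-valued. This needs the conical invariance $\lambda C=C$ for $\lambda\in\RR_{++}$ and a short argument that $\sup_{x\in C}\skpr{\phi}{x}$ cannot be a finite positive number. Everything else is an application of the conjugacy calculus already developed in \eqref{eq-dist_star} and the standard biconjugate theorem, so the content of the proposition is essentially this identification of the domain of the conjugate with the intersection of the two polar sets.
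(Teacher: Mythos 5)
Your proposal is correct and follows essentially the same route as the paper: both use \eqref{eq-dist_star} together with the identity $\delta(\cdot,C^\circ)=h(\cdot,C)$ for a closed convex cone apexed at the origin to recognize $\dist_\gamma(\cdot,C)^\ast$ as the indicator of $C^\circ\cap(-\mcg{\gamma}{0}{1}^\circ)$, and then conclude by conjugating and applying the Fenchel--Moreau theorem. The step you flag as the main obstacle (the polar set of a cone coinciding with its polar cone, so that $h(\cdot,C)$ is $\{0,+\infty\}$-valued) is exactly the fact the paper uses, stated there without further comment.
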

\begin{proof}
By \eqref{eq-dist_star},
\begin{align*}
\delta(\cdot,C^\circ\cap (-\mc{0}{1}^\circ))&=\delta(\cdot,-\mc{0}{1}^\circ) + \delta(\cdot,C^\circ)\\
&=\delta(\cdot,-\mc{0}{1}^\circ) + h(\cdot,C)\\
&=\dist(\cdot,C)^\ast.
\end{align*}
The assertion now follows from taking conjugates and applying the Fenchel--Moreau theorem \cite[Theorem~13.32]{BauschkeCo2011}.\qed
\end{proof}

\begin{proposition}
Let $K$ be an affine flat in a generalized Minkowski space $(X,\gamma)$, i.e., $K=r+U$ with a linear subspace $U$ of $X$ and $r \in X$. Then, for all $x \in X$,
\begin{equation*}
\p \dist(\cdot,K)(x) =U^\perp\cap (-\mc{0}{1}^\circ) \cap \setcond{\phi \in X^\ast}{\dist(x,K)=\skpr{\phi}{x-r}},
\end{equation*}
where $U^\perp\defeq\setcond{\phi\in X^\ast}{\skpr{\phi}{x}=0 \fall x\in U}$.
\end{proposition}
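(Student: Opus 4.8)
The plan is to specialize the general subdifferential formula \eqref{eq-d_dist_2} to the affine flat $K = r+U$. That formula already gives
\[
\p \dist(\cdot,K)(x) = -\mc{0}{1}^\circ \cap \setcond{\phi \in X^\ast}{\dist(x,K)=\inf_{y\in K}\skpr{\phi}{x-y}},
\]
so the entire task reduces to rewriting the second intersected set explicitly under the hypothesis that $K$ is an affine flat. No new machinery is needed beyond what \eqref{eq-d_dist_2} supplies.

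First I would evaluate the linear infimum. Writing a generic point of $K$ as $y=r+u$ with $u\in U$, I get $\skpr{\phi}{x-y}=\skpr{\phi}{x-r}-\skpr{\phi}{u}$, whence $\inf_{y\in K}\skpr{\phi}{x-y}=\skpr{\phi}{x-r}-\sup_{u\in U}\skpr{\phi}{u}$. Since $U$ is a \emph{linear} subspace, the restriction of $\phi$ to $U$ is a linear functional, so $\sup_{u\in U}\skpr{\phi}{u}$ equals $0$ when $\phi\in U^\perp$ and equals $+\infty$ otherwise (choosing any $u\in U$ with $\skpr{\phi}{u}\neq 0$ and scaling it yields arbitrarily large values). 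Next I would invoke finiteness: because $K$ is non-empty, $\dist(x,K)$ is a finite non-negative real number, so the equality $\dist(x,K)=\inf_{y\in K}\skpr{\phi}{x-y}$ can hold only if the right-hand infimum is finite, which by the previous step forces $\phi\in U^\perp$; and when $\phi\in U^\perp$ the infimum is exactly $\skpr{\phi}{x-r}$, so the equality reads $\dist(x,K)=\skpr{\phi}{x-r}$. This identifies
\[
\setcond{\phi \in X^\ast}{\dist(x,K)=\inf_{y\in K}\skpr{\phi}{x-y}} = U^\perp \cap \setcond{\phi \in X^\ast}{\dist(x,K)=\skpr{\phi}{x-r}},
\]
and substituting this back into \eqref{eq-d_dist_2} produces precisely the claimed three-fold intersection.

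The only delicate point, and the one place where carelessness could break the argument, is the dichotomy between a finite and an infinite infimum: one must explicitly rule out functionals $\phi\notin U^\perp$, and the reason is exactly that $\dist(x,K)$ is always a real number and can never match the value $+\infty$ forced by a non-orthogonal $\phi$. Everything surrounding this observation is routine substitution into an already established identity, so I expect no serious obstacle beyond recording this finiteness step cleanly.
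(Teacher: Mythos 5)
Your proof is correct and follows essentially the same route as the paper's: the paper specializes formula \eqref{eq-d_dist_4} by computing the support function $h(\phi,K)$, which equals $\skpr{\phi}{r}$ for $\phi\in U^\perp$ and $+\infty$ otherwise, while you specialize the equivalent formula \eqref{eq-d_dist_2} by computing $\inf_{y\in K}\skpr{\phi}{x-y}=\skpr{\phi}{x}-h(\phi,K)$ --- the same calculation in dual clothing. One small slip in your closing remark: for $\phi\notin U^\perp$ that infimum is $-\infty$ rather than $+\infty$, but this does not affect the argument, since either way it fails to be finite and hence cannot equal $\dist(x,K)$.
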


\begin{proof}
For $\phi \in X^\ast$,
\begin{equation*}
h(\phi,K)=\left\{\begin{matrix}\skpr{\phi}{r},&\phi\in U^\perp,\\+\infty,&\text{otherwise}\end{matrix}\right\}=\begin{cases}\skpr{\phi}{r},&\phi\in (K-r)^\perp,\\+\infty,&\text{otherwise.}\end{cases}
\end{equation*}
Therefore, by \eqref{eq-d_dist_4},
\begin{align*}
\p \dist(\cdot,K)(x)&=\setcond{\phi \in X^\ast}{\tilde{\gamma}^\circ(\phi)\leq 1, h(\phi,K)+\dist(x,K)=\skpr{\phi}{x}} \\
&=\setcond{\phi \in X^\ast}{\gamma^\circ(-\phi)\leq 1,\phi \in U^\perp, \dist(x,K)=\skpr{\phi}{x-r}}\\
&=-\mc{0}{1}^\circ \cap U^\perp \cap \setcond{\phi \in X^\ast}{\dist(x,K)=\skpr{\phi}{x-r}}.
\end{align*}
\qed
\end{proof}

\begin{remark}
One might expect that Proposition~\ref{prop-sublevel} (at least if $d=2$) can be extended to the case of non-empty compact, convex sets $K_1,\ldots,K_n\subseteq X= \RR^2$ instead of $p_1,\ldots,p_n$ as in Theorem~\ref{thm:fermat-torricelli_general} in the following sense: Every extreme point $x_0$ of $f_{\leq \alpha}$ can be expressed as $x_0= p+\lambda w$ with an extreme point $p$ of $K_i$ for suitable $i \in \setn{1,\ldots,n}$, a real number $\lambda \in\RR_+$, and an extreme point $w$ of $-\mcg{\gamma_i}{0}{1}$. But this is not the case, as the following example shows.

Consider $n=4$,
\begin{align*}
K_1&=[(-4,-1),(4,-1)],&K_2&=[(-4,1),(4,1)],\\
K_3&=[(-1,-4),(-1,4)],&K_4&=[(1,-4),(1,4)],
\end{align*}
let $\gamma_1=\gamma_2=\gamma_3=\gamma_4= \mnorm{\cdot}_\infty$ be the maximum norm, and let $\alpha=4$. Then the extreme points of $f_{\leq 4}=[-1,1]^2$ do not admit the above representation.
\end{remark}

\section{Finitely Many Convex Sets in Euclidean Spaces}
\label{chap:sets_euclidean}
If $\gamma_1=\ldots=\gamma_n= \gamma$ and $(X,\gamma)=(\EE^d,\mnorm{\cdot})$ is the $d$-dimensional Euclidean space with scalar product $\skpr{\cdot}{\cdot}$, we may identify $(X^\ast, \gamma^\circ)\cong (X,\gamma)$. Then the metric projection $\Proj(\cdot,K): \EE^d \rightrightarrows \EE^d$ onto a non-empty closed convex set is singleton-valued and can be understood as a map into $\EE^d$. Theorem~\ref{thm:fermat-torricelli_general} reduces to
\begin{theorem}[{see \cite[Theorem~4.2]{MordukhovichNa2011b}}]\label{thm:hilbert_minisum_single}
Let $K_1,\ldots,K_n\subseteq \EE^d$ be non-empty closed convex sets, and let $f: \EE^d\to\RR$, $f(x)=\sum_{i=1}^n \dist(x,K_i)$. Then the following are equivalent for every $\bar{x} \in \EE^d$.
\begin{enumerate}[label={(\alph*)},leftmargin=0.3cm,align=left]
\item{The point $\bar{x}$ is a minimizer of $f$.\label{hilbertcase_a}}
\item{There exist points $z_1,\ldots,z_n\in \EE^d$ satisfying the relations $\mnorm{z_i}\leq 1$ and $\dist(\bar{x},K_i)=\inf_{y\in K_i}\skpr{z_i}{\bar{x}-y}$ such that $\sum_{i=1}^n z_i=0$.\label{hilbertcase_b}}
\item{We have
\begin{equation*}
\sum_{\substack{i=1,\ldots,n\\\bar{x}\notin K_i}} \frac{\Proj(\bar{x},K_i)-\bar{x}}{\mnorm{\Proj(\bar{x},K_i)-\bar{x}}}\in \sum_{\substack{i=1,\ldots,n\\\bar{x}\in K_i}} (\nor(\bar{x},K_i)\cap\mc{0}{1}).
\end{equation*}\label{hilbertcase_c}}
\end{enumerate}
\end{theorem}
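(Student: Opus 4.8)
The plan is to deduce the theorem from Theorem~\ref{thm:fermat-torricelli_general} by specializing to the Euclidean norm and then translating the abstract optimality condition into the geometric language of metric projections and normal cones. Throughout I use the identification $(X^\ast,\gamma^\circ)\cong(\EE^d,\mnorm{\cdot})$ fixed at the start of this section, under which a functional $\phi_i$ corresponds to a vector $z_i$ with $\skpr{\phi_i}{x}=\skpr{z_i}{x}$ and, by self-duality of the Euclidean norm, $\gamma_i^\circ(-\phi_i)=\mnorms{-z_i}=\mnorms{z_i}$. With this dictionary the optimality condition of Theorem~\ref{thm:fermat-torricelli_general} reads exactly as condition~\ref{hilbertcase_b}, so the equivalence \ref{hilbertcase_a}$\Leftrightarrow$\ref{hilbertcase_b} needs nothing beyond recording the identification.

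The substance of the proof is the equivalence \ref{hilbertcase_b}$\Leftrightarrow$\ref{hilbertcase_c}. First I would describe, for each $i$, the admissible set $Z_i$ of vectors $z_i$ satisfying $\mnorms{z_i}\leq 1$ and $\dist(\bar{x},K_i)=\inf_{y\in K_i}\skpr{z_i}{\bar{x}-y}$; by \eqref{eq-d_dist_2} this is precisely the subdifferential $\p\dist(\cdot,K_i)(\bar{x})$. I split according to whether $\bar{x}$ lies in $K_i$. If $\bar{x}\in K_i$, then $\dist(\bar{x},K_i)=0$ and, using \eqref{eq-d_dist_2}, \eqref{eq-d_dist_3} together with $-\mc{0}{1}^\circ=\mc{0}{1}$ in the Euclidean case, one reads off $Z_i=\nor(\bar{x},K_i)\cap\mc{0}{1}$.

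If $\bar{x}\notin K_i$, I claim $Z_i$ is the singleton $\setn{u_i}$ with $u_i=\frac{\bar{x}-\Proj(\bar{x},K_i)}{\mnorm{\bar{x}-\Proj(\bar{x},K_i)}}=-\frac{\Proj(\bar{x},K_i)-\bar{x}}{\mnorm{\Proj(\bar{x},K_i)-\bar{x}}}$. Writing $p_i=\Proj(\bar{x},K_i)$, membership $u_i\in Z_i$ follows from the variational inequality $\skpr{\bar{x}-p_i}{y-p_i}\leq 0$ for all $y\in K_i$, which gives $\skpr{u_i}{\bar{x}-y}\geq\skpr{u_i}{\bar{x}-p_i}=\mnorm{\bar{x}-p_i}=\dist(\bar{x},K_i)$, with equality at $y=p_i$. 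Uniqueness is a Cauchy--Schwarz argument: any $z_i\in Z_i$ satisfies $\mnorm{\bar{x}-p_i}=\dist(\bar{x},K_i)\leq\skpr{z_i}{\bar{x}-p_i}\leq\mnorms{z_i}\,\mnorm{\bar{x}-p_i}\leq\mnorm{\bar{x}-p_i}$, so equality holds throughout, forcing $\mnorms{z_i}=1$ and $z_i$ positively proportional to $\bar{x}-p_i$, i.e.\ $z_i=u_i$.

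Assembling these descriptions, condition~\ref{hilbertcase_b} becomes $\sum_{i:\bar{x}\notin K_i}u_i+\sum_{i:\bar{x}\in K_i}z_i=0$ with each $z_i\in\nor(\bar{x},K_i)\cap\mc{0}{1}$ free to be chosen; substituting $u_i=-\frac{\Proj(\bar{x},K_i)-\bar{x}}{\mnorm{\Proj(\bar{x},K_i)-\bar{x}}}$ and moving the second sum to the other side yields exactly the inclusion in \ref{hilbertcase_c}. The hard part---the only place where Euclidean geometry enters beyond the self-duality identification---is the singleton claim for $\bar{x}\notin K_i$, resting on uniqueness of the metric projection and the equality case of Cauchy--Schwarz; everything else is bookkeeping of signs and the substitution.
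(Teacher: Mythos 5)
Your proposal is correct and follows essentially the same route as the paper: the equivalence \ref{hilbertcase_a}$\Leftrightarrow$\ref{hilbertcase_b} is read off from Theorem~\ref{thm:fermat-torricelli_general} via self-duality, and the equivalence with \ref{hilbertcase_c} rests on exactly the paper's key step, namely that for $\bar{x}\notin K_i$ the equality case of the Cauchy--Schwarz inequality forces $z_i=\frac{\bar{x}-\Proj(\bar{x},K_i)}{\mnorm{\bar{x}-\Proj(\bar{x},K_i)}}$, while for $\bar{x}\in K_i$ the admissible set is $\nor(\bar{x},K_i)\cap\mc{0}{1}$. Your packaging of the argument as an explicit computation of the admissible sets $Z_i=\p\dist(\cdot,K_i)(\bar{x})$ is precisely the subdifferential formula \eqref{eq-subdiff_euclidean} that the paper mentions as an alternative proof, so nothing is gained or lost.
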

\begin{proof}
The equivalence \ref{hilbertcase_a}$\Leftrightarrow$\ref{hilbertcase_b} is a direct consequence of Theorem~\ref{thm:fermat-torricelli_general}. 

For the implication \ref{hilbertcase_c}$\Rightarrow$\ref{hilbertcase_b}, set $z_i=\frac{\bar{x}-\Proj(\bar{x},K_i)}{\mnorm{\bar{x}-\Proj(\bar{x},K_i)}}$ if $\bar{x} \notin K_i$. Then \ref{hilbertcase_c} says that $-\sum_{\substack{i=1,\ldots,n\\\bar{x}\notin K_i}} z_i \in \sum_{\substack{i=1,\ldots,n\\\bar{x}\in K_i}} (\nor(\bar{x},K_i)\cap\mc{0}{1})$, and we can pick a point $z_i \in \nor(\bar{x},K_i)\cap\mc{0}{1}$ if $\bar{x} \in K_i$ such that $-\sum_{\substack{i=1,\ldots,n\\\bar{x}\notin K_i}} z_i = \sum_{\substack{i=1,\ldots,n\\\bar{x}\in K_i}} z_i$. This gives \ref{hilbertcase_b}.

Conversely, for \ref{hilbertcase_b}$\Rightarrow$\ref{hilbertcase_c} we proceed similarly. It suffices to show that, if $\bar{x}\notin K_i$, $\mnorm{z_i}\leq 1$, and $\dist(\bar{x},K_i)=\inf_{y\in K_i}\skpr{z_i}{\bar{x}-y}$, then $z_i=\frac{\bar{x}-\Proj(\bar{x},K_i)}{\mnorm{\bar{x}-\Proj(\bar{x},K_i)}}$. But
\begin{align*}
\mnorm{\bar{x}-\Proj(\bar{x},K_i)}&=\dist(\bar{x},K_i)\\
&=\inf_{y\in K_i}\skpr{z_i}{\bar{x}-y}\\
&\leq\skpr{z_i}{\bar{x}-\Proj(\bar{x},K_i)}\\
&\stackrel{\star}{\leq} \mnorm{z_i} \mnorm{\bar{x}-\Proj(\bar{x},K_i)}\\
&\leq \mnorm{\bar{x}-\Proj(\bar{x},K_i)}
\end{align*}
yields equality in the Cauchy--Schwarz inequality $\stackrel{\star}{\leq}$ and, hence, the desired representation for $z_i$.\qed
\end{proof}

Note that criterion \ref{hilbertcase_c} may involve a Minkowski sum over an empty set of indices. This sum is $\setn{0}$, the neutral element with respect to Minkowski summation. 

An alternative proof of \ref{hilbertcase_a}$\Leftrightarrow$\ref{hilbertcase_c} is possible with the aid of the subdifferential formula 
\begin{equation}
\p\dist(\cdot,K)(x)=\begin{cases}
\setn{\frac{x-\Proj(x,K)}{\mnorm{x-\Proj(x,K)}}},&x\notin K,\\[1ex]
\nor(x,K)\cap\mc{0}{1},&x\in K,
\end{cases}\label{eq-subdiff_euclidean}
\end{equation}
from \cite[Example~16.49]{BauschkeCo2011}. A straightforward modification allows the introduction of positive weights.

\begin{corollary}\label{cor-weights}
Let $K_1,\ldots,K_n\subseteq \EE^d$ be non-empty closed convex sets, let $w_1,\ldots,w_n \in \RR_{++}$ be positive weights. Then $\bar{x} \in \EE^d$ is a minimum point of $f:\EE^d \to\RR$, $f(x)=\sum_{i=1}^n w_i \dist(x,K_i)$ if and only if
\begin{equation*}
\label{eq-criterion}
\sum_{\substack{i=1,\ldots,n\\\bar{x}\notin K_i}} w_i\frac{\Proj(\bar{x},K_i)-\bar{x}}{\mnorm{\Proj(\bar{x},K_i)-\bar{x}}}\in \sum_{\substack{i=1,\ldots,n\\\bar{x}\in K_i}} (\nor(\bar{x},K_i)\cap\mc{0}{w_i}).
\end{equation*}
\end{corollary}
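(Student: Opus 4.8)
The plan is to combine Fermat's rule with the Euclidean subdifferential formula \eqref{eq-subdiff_euclidean} and the additivity of the subdifferential for finite sums of convex functions. Since each summand $w_i\dist(\cdot,K_i)$ is a real-valued convex function on $\EE^d$, the point $\bar{x}$ minimizes $f$ if and only if $0\in\p f(\bar{x})$. By \cite[Corollary~16.39]{BauschkeCo2011} the subdifferential of the sum decomposes, and by positive homogeneity of the subdifferential under multiplication by the positive scalars $w_i$ one has
\[
\p f(\bar{x})=\sum_{i=1}^n \p\big(w_i\dist(\cdot,K_i)\big)(\bar{x})=\sum_{i=1}^n w_i\,\p\dist(\cdot,K_i)(\bar{x}).
\]

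Next I would evaluate each term $w_i\,\p\dist(\cdot,K_i)(\bar{x})$ by means of \eqref{eq-subdiff_euclidean}, splitting the index set according to whether $\bar{x}\notin K_i$ or $\bar{x}\in K_i$. For $\bar{x}\notin K_i$ the subdifferential is the singleton $\setn{\frac{\bar{x}-\Proj(\bar{x},K_i)}{\mnorm{\bar{x}-\Proj(\bar{x},K_i)}}}$, so scaling produces the single vector $w_i\frac{\bar{x}-\Proj(\bar{x},K_i)}{\mnorm{\bar{x}-\Proj(\bar{x},K_i)}}$. For $\bar{x}\in K_i$ the subdifferential is $\nor(\bar{x},K_i)\cap\mc{0}{1}$; here I would use that $\nor(\bar{x},K_i)$ is a cone, whence $w_i\nor(\bar{x},K_i)=\nor(\bar{x},K_i)$, together with $w_i\mc{0}{1}=\mc{0}{w_i}$, to obtain $w_i\big(\nor(\bar{x},K_i)\cap\mc{0}{1}\big)=\nor(\bar{x},K_i)\cap\mc{0}{w_i}$.

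Finally I would assemble the pieces. The optimality condition $0\in\p f(\bar{x})$ then reads
\[
0\in\sum_{\substack{i=1,\ldots,n\\\bar{x}\notin K_i}}\setn{w_i\frac{\bar{x}-\Proj(\bar{x},K_i)}{\mnorm{\bar{x}-\Proj(\bar{x},K_i)}}}+\sum_{\substack{i=1,\ldots,n\\\bar{x}\in K_i}}\big(\nor(\bar{x},K_i)\cap\mc{0}{w_i}\big),
\]
and transposing the deterministic singleton terms to the other side, while using the identity $-\frac{\bar{x}-\Proj(\bar{x},K_i)}{\mnorm{\bar{x}-\Proj(\bar{x},K_i)}}=\frac{\Proj(\bar{x},K_i)-\bar{x}}{\mnorm{\Proj(\bar{x},K_i)-\bar{x}}}$, yields exactly the asserted membership relation.

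I do not expect a serious obstacle: the argument is a direct weighted modification of the proof of Theorem~\ref{thm:hilbert_minisum_single}\ref{hilbertcase_c}. The only point that warrants a moment of care is the scaling identity $w_i\big(\nor(\bar{x},K_i)\cap\mc{0}{1}\big)=\nor(\bar{x},K_i)\cap\mc{0}{w_i}$, which hinges on the conicity of the normal cone; everything else is routine convex subdifferential calculus built on the already-established formula \eqref{eq-subdiff_euclidean}.
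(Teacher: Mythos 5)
Your proposal is correct and follows exactly the route the paper intends for this corollary: the paper derives it as the ``straightforward modification'' of the alternative subdifferential-based proof of Theorem~\ref{thm:hilbert_minisum_single}, i.e., Fermat's rule plus the sum rule and formula \eqref{eq-subdiff_euclidean}, with the weights absorbed via conicity of the normal cone. The scaling identity $w_i\big(\nor(\bar{x},K_i)\cap\mc{0}{1}\big)=\nor(\bar{x},K_i)\cap\mc{0}{w_i}$ that you flag is indeed the only point needing a remark, and your justification of it is fine.
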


We give a formula for the \emph{directional derivatives} of the distance function. The directional derivative of a convex function $f:X\to\RR$ in the direction $y\in X$ is defined as 
\begin{equation*}
\frac{\p}{\p y^+}f:X\to \RR,\; \frac{\p}{\p y^+}f(x)\defeq \lim_{t\downarrow 0} \frac{f(x+ty)-f(x)}{t}.
\end{equation*}
\cite[Theorem~2.4.9]{Zalinescu2002} shows that
\begin{equation}
\frac{\p}{\p y^+}f(x)=\max\setcond{\skpr{\phi}{y}}{\phi \in \p f(x)}.\label{eq:directional_derivative_via_subdifferential}
\end{equation}

\begin{proposition}\label{prop-directional_derivatives}
Let $K$ be a non-empty closed convex set in $(\EE^d,\mnorm{\cdot})$, and let $x,y\in \EE^d$. Then
\begin{equation*}
\frac{\p}{\p y^+}\dist(\cdot,K)(x)=
\begin{cases}
\skpr{\frac{x-\Proj(x,K)}{\mnorm{x-\Proj(x,K)}}}{y},&x \notin K,\\
h(y,\nor(x,K)\cap\mc{0}{1}),& x \in K.\end{cases}
\end{equation*}
\end{proposition}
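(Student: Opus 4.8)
The plan is to combine the general identity \eqref{eq:directional_derivative_via_subdifferential}, which expresses the directional derivative of a convex function as a maximum of linear functionals over its subdifferential, with the explicit description \eqref{eq-subdiff_euclidean} of $\p\dist(\cdot,K)(x)$ in the Euclidean setting. Since we work in $(\EE^d,\mnorm{\cdot})$ and identify $X^\ast \cong X$ via the scalar product, both ingredients are already available, so the argument reduces to evaluating the maximum in each of the two cases distinguished by \eqref{eq-subdiff_euclidean}.

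First I would treat the case $x \notin K$. Here \eqref{eq-subdiff_euclidean} tells us that $\p\dist(\cdot,K)(x)$ is the singleton $\setn{\frac{x-\Proj(x,K)}{\mnorm{x-\Proj(x,K)}}}$, so the maximum in \eqref{eq:directional_derivative_via_subdifferential} is attained at its unique element and equals $\skpr{\frac{x-\Proj(x,K)}{\mnorm{x-\Proj(x,K)}}}{y}$, which is exactly the asserted value.

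Next I would treat the case $x \in K$. Now \eqref{eq-subdiff_euclidean} gives $\p\dist(\cdot,K)(x) = \nor(x,K)\cap\mc{0}{1}$, whence \eqref{eq:directional_derivative_via_subdifferential} yields
\begin{equation*}
\frac{\p}{\p y^+}\dist(\cdot,K)(x)=\max\setcond{\skpr{\phi}{y}}{\phi \in \nor(x,K)\cap\mc{0}{1}}.
\end{equation*}
By the definition of the support function and the symmetry of the scalar product, the right-hand side is precisely $h(y,\nor(x,K)\cap\mc{0}{1})$, which completes this case and establishes the claimed formula.

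The only point requiring care — and the expected (minor) obstacle — is the passage from a supremum to a maximum: the support function $h$ is defined as a supremum, whereas \eqref{eq:directional_derivative_via_subdifferential} produces a maximum. This is resolved by observing that $\nor(x,K)\cap\mc{0}{1}$ is compact, being the intersection of the closed normal cone with the compact unit ball $\mc{0}{1}$, so that the continuous linear functional $\phi \mapsto \skpr{\phi}{y}$ attains its supremum on it. Hence the maximum appearing in \eqref{eq:directional_derivative_via_subdifferential} and the supremum defining $h(y,\nor(x,K)\cap\mc{0}{1})$ coincide, and the two cases together yield the proposition.
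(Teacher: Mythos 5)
Your proof is correct and follows exactly the paper's route: the paper's entire proof is the one-line remark that one combines \eqref{eq-subdiff_euclidean} with \eqref{eq:directional_derivative_via_subdifferential}, which is precisely what you carry out case by case. Your additional compactness observation justifying the supremum/maximum identification is a harmless elaboration of the same argument.
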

\begin{proof}
One combines \eqref{eq-subdiff_euclidean} with \eqref{eq:directional_derivative_via_subdifferential}.\qed
\end{proof}

Analogously like in the former section, we study now some particular consequences. In \cite{KupitzMaSp2013}, the Fermat--Torricelli problem is considered for affine flats in Euclidean space. If $K \subseteq \EE^d$ is an affine flat with $x \in K$, then $K=x+(K-x)$, where $K-x$ is a linear subspace of $\EE^d$, and 
\begin{equation*}
\nor(x,K)=(K-x)^\perp
\end{equation*}
consists of all vectors orthogonal to $K$. We generalize central results from \cite{KupitzMaSp2013}.

\begin{theorem}[see {\cite[Theorems 2.1, 3.1, and 4.2]{KupitzMaSp2013}}]\label{thm-Euclidean}
Let $K_1,\ldots,K_n$ be non-empty closed convex subsets of $(\EE^d,\mnorm{\cdot})$, let $\bar{x} \in \EE^d$, and let $f: \EE^d\to\RR$, $f(x)=\sum_{i=1}^n \dist(x,K_i)$.
\begin{enumerate}[label={(\alph*)},leftmargin=0.3cm,align=left]
\item{Floating case: if $\bar{x} \notin \bigcup_{i=1}^n K_i$, then $\bar{x}$ is a minimizer of $f$ if and only if 
\begin{equation*}
\sum_{i=1}^n \frac{\Proj(\bar{x},K_i)-\bar{x}}{\mnorm{\Proj(\bar{x},K_i)-\bar{x}}}=0.
\end{equation*}\label{floating}}
\item{Point absorbed case: if $\bar{x} \notin \bigcup_{i=1}^{n-1} K_i$ and $K_n=\setn{\bar{x}}$, then $\bar{x}$ is a minimizer of $f$ if and only if 
\begin{equation*}
\mnorm{\sum_{i=1}^{n-1} \frac{\Proj(\bar{x},K_i)-\bar{x}}{\mnorm{\Proj(\bar{x},K_i)-\bar{x}}}} \leq 1.
\end{equation*}\label{point_absorbed}}
\item{Flat absorbed case: if $\bar{x} \notin \bigcup_{i=1}^{n-1} K_i$ and $\bar{x} \in K_n$, where $K_n$ is an affine flat, then $\bar{x}$ is a minimizer of $f$ if and only if 
\begin{equation*}
\sum_{i=1}^{n-1} \frac{\Proj(\bar{x},K_i)-\bar{x}}{\mnorm{\Proj(\bar{x},K_i)-\bar{x}}} \text{ is orthogonal to } K_n
\end{equation*}
and
\begin{equation*}
\mnorm{\sum_{i=1}^{n-1} \frac{\Proj(\bar{x},K_i)-\bar{x}}{\mnorm{\Proj(\bar{x},K_i)-\bar{x}}}} \leq 1.
\end{equation*}
\label{flat_absorbed}}
\end{enumerate}
\end{theorem}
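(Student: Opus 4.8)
The plan is to obtain all three cases from a single source: the optimality condition $0\in\p f(\bar{x})$ combined with the Euclidean subdifferential formula \eqref{eq-subdiff_euclidean}. Because each $\dist(\cdot,K_i)$ is real-valued and convex, the subdifferential of $f$ is the Minkowski sum $\p f(\bar{x})=\sum_{i=1}^n\p\dist(\cdot,K_i)(\bar{x})$ (cf.\ \cite[Corollary~16.39]{BauschkeCo2011}), so $\bar{x}$ minimizes $f$ if and only if $0$ lies in this Minkowski sum. I would then substitute \eqref{eq-subdiff_euclidean} termwise, splitting the indices into the \emph{floating} ones with $\bar{x}\notin K_i$, where the subdifferential is the singleton $\setn{(\bar{x}-\Proj(\bar{x},K_i))/\mnorms{\bar{x}-\Proj(\bar{x},K_i)}}$, and the \emph{absorbed} ones with $\bar{x}\in K_i$, where it equals $\nor(\bar{x},K_i)\cap\mc{0}{1}$. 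Throughout I would keep track of the sign flip between $\bar{x}-\Proj(\bar{x},K_i)$ in \eqref{eq-subdiff_euclidean} and $\Proj(\bar{x},K_i)-\bar{x}$ in the statement.

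For part \ref{floating}, no index is absorbed, so the Minkowski sum collapses to the single vector $\sum_{i=1}^n(\bar{x}-\Proj(\bar{x},K_i))/\mnorms{\bar{x}-\Proj(\bar{x},K_i)}$; demanding that it equal $0$ and multiplying by $-1$ yields the claimed equation. For part \ref{point_absorbed}, the only absorbed set is $K_n=\setn{\bar{x}}$, and the normal cone $\nor(\bar{x},\setn{\bar{x}})$ is all of $\EE^d$, so $\p\dist(\cdot,K_n)(\bar{x})=\mc{0}{1}$; the condition $0\in\p f(\bar{x})$ therefore reduces to the membership of $-\sum_{i=1}^{n-1}(\bar{x}-\Proj(\bar{x},K_i))/\mnorms{\bar{x}-\Proj(\bar{x},K_i)}$ in the unit ball, which is exactly the asserted norm inequality. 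For part \ref{flat_absorbed}, using that $K_n$ is an affine flat and hence $\nor(\bar{x},K_n)=(K_n-\bar{x})^\perp$ is the space of vectors orthogonal to $K_n$, the optimality condition becomes that $s\defeq\sum_{i=1}^{n-1}(\Proj(\bar{x},K_i)-\bar{x})/\mnorms{\Proj(\bar{x},K_i)-\bar{x}}$ belongs to $(K_n-\bar{x})^\perp\cap\mc{0}{1}$; this is precisely the conjunction of $s$ being orthogonal to $K_n$ and $\mnorm{s}\leq1$.

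Since the whole argument is a direct unfolding of \eqref{eq-subdiff_euclidean}, I do not expect a genuine obstacle. The only points that need care are the bookkeeping of the sign convention and, in parts \ref{point_absorbed} and \ref{flat_absorbed}, the correct identification of the relevant normal cone --- the whole space for the singleton and the orthogonal complement of the direction space for the flat --- before intersecting with $\mc{0}{1}$ and translating the membership ``$0\in\text{(singleton)}+\text{(set)}$'' into the stated inequalities.
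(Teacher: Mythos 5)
Your proposal is correct and follows essentially the same route as the paper: the paper simply invokes criterion \ref{hilbertcase_c} of Theorem~\ref{thm:hilbert_minisum_single} and specializes the right-hand Minkowski sum to $\setn{0}$, $\mc{0}{1}$, and $(K_n-\bar{x})^\perp\cap\mc{0}{1}$ in the three cases, which is exactly what you obtain by unfolding $0\in\p f(\bar{x})$ via \eqref{eq-subdiff_euclidean} (the equivalence of these two phrasings is noted in the paper right after Theorem~\ref{thm:hilbert_minisum_single}). Your sign bookkeeping is also sound, since the absorbed-case normal cones here ($\EE^d$ and a linear subspace) are symmetric.
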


\begin{proof}
We use criterion \ref{hilbertcase_c} from Theorem~\ref{thm:hilbert_minisum_single}. It says
\begin{equation*}
\begin{array}{rcll}
\displaystyle\sum_{i=1}^n \frac{\Proj(\bar{x},K_i)-\bar{x}}{\mnorm{\Proj(\bar{x},K_i)-\bar{x}}} &\in&\setn{0}& \text{ in case \ref{floating},}\\
\displaystyle\sum_{i=1}^{n-1} \frac{\Proj(\bar{x},K_i)-\bar{x}}{\mnorm{\Proj(\bar{x},K_i)-\bar{x}}} &\in& \mc{0}{1}& \text{ in case \ref{point_absorbed},}\\
\displaystyle\sum_{i=1}^{n-1} \frac{\Proj(\bar{x},K_i)-\bar{x}}{\mnorm{\Proj(\bar{x},K_i)-\bar{x}}} &\in& (K_n-\bar{x})^\perp \cap \mc{0}{1}& \text{ in case \ref{flat_absorbed}.}
\end{array}
\end{equation*}
These imply the claims.\qed
\end{proof}

Finally we come to the so-called \emph{flat-point absorbed case} from \cite[Theorem~3.1(c)]{KupitzMaSp2013}. It concerns minimizers $\bar{x}$ of $f$ where $\setn{\bar{x}}$ represents one of the sets $K_i$ and $\bar{x}$ is contained in exactly one of the additional sets $K_i$, that, moreover, has to be an affine flat. The characterization from \cite{KupitzMaSp2013} is incorrect in so far as it is sufficient for minimality, but not necessary. Here we present a correction and slight generalization.

\begin{theorem}[see {\cite[Theorem~3.1(c)]{KupitzMaSp2013}}]
Let $K_1,\ldots,K_n$ be non-empty closed convex subsets of $(\EE^d,\mnorm{\cdot})$, where $K_{n-1}=\setn{\bar{x}} \subseteq K_n$, $\bar{x} \notin \bigcup_{i=1}^{n-2} K_i$, and $K_n$ is an affine flat of dimension $\dim(K_n) \in \setn{1,\ldots,d-1}$. Moreover, let
\begin{equation*}
v=\sum_{i=1}^{n-2} \frac{\Proj(\bar{x},K_i)-\bar{x}}{\mnorm{\Proj(\bar{x},K_i)-\bar{x}}},
\end{equation*}
let $\alpha \in \clseg{0}{\frac{\piup}{2}}$ be the angle between $v$ and $K_n$ if $v \neq 0$ and put $\alpha=0$ if $v=0$. Then the following are equivalent.
\begin{enumerate}[label={(\alph*)},leftmargin=0.3cm,align=left]
\item{$\bar{x}$ is a minimizer of $f: \EE^d\to\RR$, $f(x)=\sum_{i=1}^n \dist(x,K_i)$.\label{x_minimizer}}
\item{
\begin{equation*}
v \in \lr{(K_n-\bar{x})^\perp \cap \mc{0}{1}}+\mc{0}{1}.
\end{equation*}\label{v_orthogonality}}
\item{
\begin{equation*}
\mnorm{v}\leq \begin{cases} \frac{1}{\cos \alpha} &\text{if \,}0 \leq \alpha \leq \frac{\piup}{4},\\
2\sin \alpha &\text{if \,}\frac{\piup}{4} \leq \alpha \leq \frac{\piup}{2}.\end{cases}
\end{equation*}\label{v_angle}}
\end{enumerate}
\end{theorem}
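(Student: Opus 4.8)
The plan is to obtain all three equivalences from the optimality criterion of Theorem~\ref{thm:hilbert_minisum_single}, reducing the resulting set-membership statement to the stated trigonometric inequality by an orthogonal decomposition along $K_n$.

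First I would settle \ref{x_minimizer}$\Leftrightarrow$\ref{v_orthogonality}. Under the present hypotheses the indices $i$ with $\bar x\notin K_i$ are exactly those in $\setn{1,\ldots,n-2}$, whereas $\bar x\in K_{n-1}=\setn{\bar x}$ and $\bar x\in K_n$. Since $\nor(\bar x,\setn{\bar x})=\EE^d$, the contribution of $K_{n-1}$ to the right-hand side of criterion \ref{hilbertcase_c} is $\nor(\bar x,K_{n-1})\cap\mc{0}{1}=\mc{0}{1}$; since $K_n$ is an affine flat containing $\bar x$, we have $\nor(\bar x,K_n)=(K_n-\bar x)^\perp$, so its contribution is $(K_n-\bar x)^\perp\cap\mc{0}{1}$. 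The left-hand side of \ref{hilbertcase_c} is precisely $v$. Hence Theorem~\ref{thm:hilbert_minisum_single}\ref{hilbertcase_c} reads $v\in((K_n-\bar x)^\perp\cap\mc{0}{1})+\mc{0}{1}$, which is exactly \ref{v_orthogonality}.

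For \ref{v_orthogonality}$\Leftrightarrow$\ref{v_angle} I would write $L=K_n-\bar x$ and split $v=v_L+v_{L^\perp}$ into its orthogonal projections onto $L$ and $L^\perp$. For $v\neq 0$ the definition of the angle $\alpha$ between $v$ and $K_n$ yields $\mnorm{v_L}=\mnorm{v}\cos\alpha$ and $\mnorm{v_{L^\perp}}=\mnorm{v}\sin\alpha$ (the case $v=0$, with $\alpha=0$, being trivial). A membership $v=a+b$ with $a\in L^\perp$, $\mnorm{a}\leq 1$, and $\mnorm{b}\leq 1$ forces, after projecting onto $L$, the identity $b_L=v_L$; the constraint $\mnorm{b}\leq 1$ then becomes $\mnorm{b_{L^\perp}}\leq\sqrt{1-\mnorm{v_L}^2}$ (so in particular $\mnorm{v_L}\leq 1$), while $a=v_{L^\perp}-b_{L^\perp}$ has to satisfy $\mnorm{v_{L^\perp}-b_{L^\perp}}\leq 1$. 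Thus \ref{v_orthogonality} holds if and only if the two solid balls in $L^\perp$ --- the one of radius $\sqrt{1-\mnorm{v_L}^2}$ about the origin and the one of radius $1$ about $v_{L^\perp}$ --- meet, which by the triangle inequality for Euclidean balls happens exactly when $\mnorm{v_L}\leq 1$ and $\mnorm{v_{L^\perp}}\leq 1+\sqrt{1-\mnorm{v_L}^2}$.

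Substituting $\mnorm{v_L}=\mnorm{v}\cos\alpha$ and $\mnorm{v_{L^\perp}}=\mnorm{v}\sin\alpha$ turns this into the pair of inequalities $\mnorm{v}\cos\alpha\leq 1$ and $\mnorm{v}\sin\alpha\leq 1+\sqrt{1-\mnorm{v}^2\cos^2\alpha}$, and the remaining --- and main --- task is to extract from them the maximal admissible value of $\mnorm{v}$. The first inequality is $\mnorm{v}\leq 1/\cos\alpha$ and also guarantees the radicand is non-negative; the second is automatic when $\mnorm{v}\sin\alpha\leq 1$, and when $\mnorm{v}\sin\alpha>1$ it may be squared to yield $\mnorm{v}^2-2\mnorm{v}\sin\alpha\leq 0$, \dah $\mnorm{v}\leq 2\sin\alpha$. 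Combining these, the admissible set is $\mnorm{v}\leq\min\setn{1/\cos\alpha,\;\max\setn{1/\sin\alpha,\,2\sin\alpha}}$; using $\cos\alpha\geq\sin\alpha$ on $\clseg{0}{\piup/4}$ and the inequality $2\sin\alpha\leq 1/\cos\alpha$ throughout $\clseg{0}{\piup/2}$ (with equality only at $\alpha=\piup/4$, where $\sin 2\alpha=1$), this minimum equals $1/\cos\alpha$ for $0\leq\alpha\leq\piup/4$ and $2\sin\alpha$ for $\piup/4\leq\alpha\leq\piup/2$, which is \ref{v_angle}. The only genuinely delicate point is the bookkeeping of this squaring step together with the degenerate cases $v=0$ and $\alpha=\piup/2$, all of which are checked directly.
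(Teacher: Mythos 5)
Your proof is correct, and while its first half coincides with the paper's, your argument for the equivalence of (b) and (c) follows a genuinely different and arguably cleaner route. Like the paper, you obtain (a)$\Leftrightarrow$(b) directly from criterion \ref{hilbertcase_c} of Theorem~\ref{thm:hilbert_minisum_single}, the two absorbed sets contributing $\mc{0}{1}$ and $(K_n-\bar{x})^\perp\cap\mc{0}{1}$ to the Minkowski sum. For (b)$\Leftrightarrow$(c), however, the paper argues each implication separately by a case distinction on $\alpha$, and its hardest step (Subcase~2.2 of the forward direction) requires replacing a given decomposition $v=u+w$ by a renormalized one $v=\tilde{u}+\tilde{w}$ with $\mnorm{\tilde{u}}=1$ and then applying Pythagoras' theorem twice to compare projections onto $\lin\setn{v}$. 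You instead observe that, after projecting onto $L=K_n-\bar{x}$, any admissible decomposition is determined by the single vector $c=b_{L^\perp}\in L^\perp$, so that (b) is equivalent to the nonemptiness of the intersection of two concrete balls in $L^\perp$; this yields the exact criterion $\mnorm{v_L}\le 1$ and $\mnorm{v_{L^\perp}}\le 1+\sqrt{1-\mnorm{v_L}^2}$ in one stroke and reduces the rest to elementary trigonometric bookkeeping, which you carry out correctly (the squaring step is legitimate because both sides are nonnegative in the relevant regime, and the identification $\max\setn{1/\sin\alpha,\,2\sin\alpha}=2\sin\alpha$ for $\alpha\ge\piup/4$ follows from $2\sin^2\alpha\ge 1$, a point you gloss over but which is immediate). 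Two small points are worth making explicit: the ball-intersection criterion needs $L^\perp\neq\setn{0}$, which is guaranteed by the hypothesis $\dim(K_n)\le d-1$; and your characterization proves both implications simultaneously, whereas the paper must construct an explicit decomposition of $v$ for the converse.
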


\begin{proof}
Theorem~\ref{thm:hilbert_minisum_single} gives the equivalence of \ref{x_minimizer} and \ref{v_orthogonality}.
We denote the linear space $K_n-\bar{x}$ by $V$. Then it remains to show that
\begin{equation*}
v \in \lr{V^\perp \cap \mc{0}{1}} + \mc{0}{1} \quad\Longleftrightarrow\quad \mnorm{v}\leq \begin{cases} \frac{1}{\cos \alpha} &\text{if \,}0 \leq \alpha \leq \frac{\piup}{4},\\
2\sin \alpha &\text{if \,}\frac{\piup}{4} \leq \alpha \leq \frac{\piup}{2}.\end{cases}
\end{equation*}
We can suppose that $v \neq 0$, since the claim is obviously true if $v=0$. In order to shorten notation, we write $w|_W$ for the orthogonal projection of a vector $w \in \EE^d$ onto a linear space $W \subseteq \EE^d$. Clearly,
\begin{align*}
v&=v|_{V^\perp}+v|_V,&\mnorm{v}^2&= \mnorm{v|_{V^\perp}}^2+\mnorm{v|_V}^2,\\
\sin \alpha&= \frac{\mnorm{v|_{V^\perp}}}{\mnorm{v}},&\cos \alpha&= \frac{\mnorm{v|_{V}}}{\mnorm{v}}.
\end{align*}

\emph{I. Proof of `$\Rightarrow$'. } By $v \in \lr{V^\perp \cap \mc{0}{1}} + \mc{0}{1}$, there is a representation
\begin{equation}
\begin{array}{c}
v=u+w,\quad u=u|_{V^\perp} \in V^\perp,\quad \mnorm{u} \leq 1,\quad w=w|_{V^\perp}+w|_V,\\[1ex]
\mnorm{w|_{V^\perp}}^2+\mnorm{w|_V}^2=\mnorm{w}^2 \leq 1,\quad v|_{V^\perp}=u+w|_{V^\perp},\quad v|_V=w|_V.
\end{array}\label{eq_y1}
\end{equation}

\emph{Case 1: $0 \leq \alpha \leq \frac{\piup}{4}$. } We obtain $0 < \mnorm{v|_V} \leq 1$, because
\begin{equation*}
0 < \mnorm{v} \cos \alpha = \mnorm{v|_V}= \mnorm{w|_V} \leq 1.
\end{equation*}
This gives $\mnorm{v} \leq \frac{\mnorm{v}}{\mnorm{v|_V}} = \frac{1}{\cos \alpha}$.

\emph{Case 2: $\frac{\piup}{4} \leq \alpha \leq \frac{\piup}{2}$. } Then 
\begin{equation*}
\mnorm{v|_V} =\mnorm{v} \cos \alpha \leq \mnorm{v} \sin \alpha = \mnorm{v|_{V^\perp}}.
\end{equation*}

\emph{Subcase 2.1: $\mnorm{v|_{V^\perp}} \leq 1$. } Now
\begin{equation*}
\mnorm{v}^2= \mnorm{v|_{V^\perp}}^2+\mnorm{v|_V}^2 \leq 2\mnorm{v|_{V^\perp}}^2 \leq 2\mnorm{v|_{V^\perp}},
\end{equation*}
which yields $\mnorm{v} \leq \frac{2\mnorm{v|_{V^\perp}}}{\mnorm{v}}= 2\sin \alpha$.

\emph{Subcase 2.2: $\mnorm{v|_{V^\perp}}> 1$. } We define $\tilde{u}= \frac{v|_{V^\perp}}{\mnorm{v|_{V^\perp}}}$ and $\tilde{w}=v-\tilde{u}$. Note that
\begin{equation*}
\mnorm{\tilde{w}|_{V^\perp}}=\mnorm{v|_{V^\perp}-\tilde{u}}=
\mnorm{v|_{V^\perp}}-1 \leq \mnorm{u}+\mnorm{w|_{V^\perp}}-1 \leq \mnorm{w|_{V^\perp}}
\end{equation*}
and $\tilde{w}|_V=v|_V-\tilde{u}|_V=v|_V=w|_V$. Therefore, the representation $v=\tilde{u}+\tilde{w}$ satisfies analogous conditions as in \eqref{eq_y1}, but with $\mnorm{\tilde{u}}=1$.
We have
\begin{equation*}
v=\tilde{u}+\tilde{w}= (\tilde{u}+\tilde{w})|_{\lin\setn{v}}= \tilde{u}|_{\lin\setn{v}}+ \tilde{w}|_{\lin\setn{v}}.
\end{equation*}
Application of Pythagoras' theorem to $0, \tilde{u}|_{\lin\setn{v}}, \tilde{u}$ and to
$v, \tilde{u}|_{\lin\setn{v}}, \tilde{u}$ gives
\begin{equation*}
\begin{array}{c}
1=\mnorm{\tilde{u}}^2= \mnorms{\tilde{u}|_{\lin\setn{v}}}^2+\mnorms{\tilde{u}-\tilde{u}|_{\lin\setn{v}}}^2,\\[1ex]
\mnorm{\tilde{w}}^2= \mnorm{v-\tilde{u}}^2= \mnorms{\tilde{u}-\tilde{u}|_{\lin\setn{v}}}^2+\mnorms{v-\tilde{u}|_{\lin\setn{v}}}^2.
\end{array}
\end{equation*}
This yields
\begin{equation}
\mnorms{\tilde{w}|_{\lin\setn{v}}} \leq \mnorms{\tilde{u}|_{\lin\setn{v}}},\label{eq_y2}
\end{equation}
because
\begin{equation*}
\begin{array}{c}
\hspace{-12ex} \mnorms{\tilde{w}|_{\lin\setn{v}}}^2 = \mnorms{v-\tilde{u}|_{\lin\setn{v}}}^2=
\mnorm{v-\tilde{u}}^2-\mnorms{\tilde{u}-\tilde{u}|_{\lin\setn{v}}}^2\\[1ex]
\hspace{5ex}  =\mnorm{\tilde{w}}^2-(1-\mnorms{\tilde{u}|_{\lin\setn{v}}}^2) \leq
1-(1-\mnorms{\tilde{u}|_{\lin\setn{v}}}^2)=\mnorms{\tilde{u}|_{\lin\setn{v}}}^2.
\end{array}
\end{equation*}
The angle between $\lin\setn{v}$ and $\lin\setn{v|_{V^\perp}}$ is $\frac{\piup}{2}-\alpha$. Therefore, the vectors $\tilde{u}|_{\lin\setn{v}} \in \lin\setn{v}$ and $\tilde{u} \in \lin\setn{v|_{V^\perp}}$ are related by
\begin{equation}
\mnorms{\tilde{u}|_{\lin\setn{v}}}=\mnorms{\tilde{u}} \cos\lr{\frac{\piup}{2}-\alpha}= \sin \alpha.\label{eq_y3}
\end{equation}
Finally, \eqref{eq_y2} and \eqref{eq_y3} give our claim
\begin{equation*}
\mnorm{v}=\mnorms{\tilde{u}|_{\lin\setn{v}}+\tilde{w}|_{\lin\setn{v}}} \leq \mnorms{\tilde{u}|_{\lin\setn{v}}}+\mnorms{\tilde{w}|_{\lin\setn{v}}} \leq 2\mnorms{\tilde{u}|_{\lin\setn{v}}} = 2\sin \alpha.
\end{equation*}

\emph{II. Proof of `$\Leftarrow$'. }

\emph{Case 1: $0 \leq \alpha \leq \frac{\piup}{4}$. } The assumption $\mnorm{v} \leq \frac{1}{\cos \alpha}$ implies
\begin{equation*}
\mnorm{v|_V}= \mnorm{v} \cos \alpha \leq \frac{1}{\cos \alpha} \cos \alpha=1.
\end{equation*}
Moreover, by $0 \leq \alpha \leq \frac{\piup}{4}$,
\begin{equation*}
\mnorm{v|_{V^\perp}}= \mnorm{v} \sin \alpha \leq \mnorm{v} \cos \alpha= \mnorm{v|_V} \leq 1.
\end{equation*}
Therefore, $v|_{V^\perp} \in V^\perp \cap \mc{0}{1}$ and $v|_V \in \mc{0}{1}$, and the representation $v=v|_{V^\perp}+v|_V$ shows that $v \in \lr{V^\perp \cap \mc{0}{1}}+\mc{0}{1}$.

\emph{Case 2: $\frac{\piup}{4} \leq \alpha \leq \frac{\piup}{2}$. } We define $u= \frac{v|_{V^\perp}}{2 \sin^2 \alpha} \in V^\perp$ and $w=v-u$. By the assumption $\mnorm{v} \leq 2 \sin \alpha$,
\begin{equation*}
\mnorm{u}=\frac{1}{2 \sin^2 \alpha} \mnorm{v|_{V^\perp}} = \frac{1}{2 \sin^2 \alpha} \mnorm{v} \sin \alpha \leq 1.
\end{equation*}
Moreover,
\begin{align*}
\mnorm{w}^2 &= \mnorm{v-u}^2\\
&=\mnorm{ \lr{v|_{V^\perp} - \frac{v|_{V^\perp}}{2 \sin^2 \alpha}} + v|_V}^2\\
&=\lr{1-\frac{1}{2 \sin^2 \alpha}}^2 \mnorm{v|_{V^\perp}}^2+\mnorm{v|_V}^2\\
&=\lr{1-\frac{1}{2 \sin^2 \alpha}}^2 (\mnorm{v} \sin \alpha)^2 + (\mnorm{v} \cos \alpha)^2\\
&=\frac{\mnorm{v}^2}{(2 \sin \alpha)^2} \\
&\leq 1. 
\end{align*}
Thus, the representation $v=u+w$ shows that $v \in \lr{V^\perp \cap \mc{0}{1}} + \mc{0}{1}$.\qed
\end{proof}

Proposition~3.1 from \cite{KupitzMaSp2013} discusses the question when the Fermat--Torricelli objective function with respect to one affine flat and finitely many points in $\EE^d$ has more than one miminizer. Since the characterization given there is incorrect, we address that problem again. 
\pagebreak
\begin{definition}[{see \cite[Definition~3.1]{KupitzMaSp2013}}]
\begin{enumerate}[label={(\alph*)},leftmargin=0.3cm,align=left]
\item{A collection of $n \geq 2$ distinct points $p_1,\ldots,$ $p_n \in \EE^d$ is called \emph{ortho-collinear to an affine flat $F \subseteq \EE^d$} iff $p_1,\ldots,p_n$ are collinear such that $\aff\setn{p_1,\ldots,p_n} \perp F$ (that is, $\skpr{x-y}{v-w}=0$ for all $x,y \in\aff\setn{p_1,\ldots,p_n}$ and for all $v,w\in F$) and $\aff\setn{p_1,\ldots,p_n} \cap F \neq \emptyset$.}
\item{The \emph{median} of an odd number of collinear points $p_1,\ldots,p_n \in \EE^d$, $n \geq 3$, is the $\frac{n+1}{2}$th point if $p_1,\ldots,p_n$ are naturally ordered along the straight line $\aff\setn{p_1,\ldots,p_n}$.}
\end{enumerate}
\end{definition}

\begin{proposition}\label{correction3.1}
Let $F \subseteq \EE^d$ be an affine flat, $F \neq \emptyset$, let $p_1,\ldots,p_n \in \EE^d$ be $n \geq 1$ distinct points, and let $f: \EE^d \to \RR, f(x)=\dist(x,F)+\sum_{i=1}^n \mnorm{p_i-x}$. Then $f$ has more than one minimum point if and only if one of the following is satisfied.
\begin{enumerate}[label={(\roman*)},leftmargin=0.3cm,align=left]
\item{$n=1$ and $p_1 \notin F$,\label{p_notin_F}}
\item{$n \geq 2$ is even and $\aff\setn{p_1,\ldots,p_n}$ is a straight line contained in $F$,\label{even_case}}
\item{$n \geq 3$ is odd, $p_1,\ldots,p_n$ are ortho-collinear to $F$, and $F$ does not contain the median of $p_1,\ldots,p_n$.\label{odd_case}}
\end{enumerate}
\end{proposition}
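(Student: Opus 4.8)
The plan is to exploit convexity throughout. Since each summand of $f$ is convex and $f$ is coercive (already $\sum_{i=1}^n\mnorm{p_i-x}\geq\mnorm{p_1-x}\to\infty$), the minimizer set $M$ is a non-empty compact convex set, so $\abs{M}>1$ precisely when $M$ contains a non-degenerate segment $[x_0,x_1]$. On such a segment $f$ is constant, and because $f$ is a sum of the convex functions $\dist(\cdot,F)$ and $g_i=\mnorm{p_i-\cdot}$, each summand must be affine on $[x_0,x_1]$ (convex functions whose sum is affine on a segment are individually affine there). First I would record the elementary fact that the Euclidean function $\mnorm{p_i-\cdot}$ is affine on a non-degenerate segment if and only if that segment lies on a line through $p_i$ with $p_i$ outside its relative interior; transversally the Euclidean norm is strictly convex. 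Hence the line $L=\aff\setn{x_0,x_1}$ must pass through every $p_i$, which forces $p_1,\ldots,p_n$ to be collinear and the entire minimizing segment to lie on the single line $L$.

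Next I would analyze $\dist(\cdot,F)$ along $L$. Writing $F=r+U$, $a=(x_0-r)|_{U^\perp}$ and $b=(x_1-x_0)|_{U^\perp}$, one has $\dist(x_0+t(x_1-x_0),F)=\mnorm{a+tb}$, which is affine on $[0,1]$ if and only if $b=0$ (i.e.\ $L\parallel F$) or, when $b\neq 0$, $a$ and $b$ are parallel; using $x_0\in L$ and a point $c\in L\cap F$ one checks that, for $b\neq 0$, parallelism of $a,b$ is equivalent to $L\cap F\neq\emptyset$. Restricting $f$ to $L$ and parametrizing by signed arclength $t$ yields a one-dimensional convex piecewise-linear function $g(t)=\kappa\abs{t-t_c}+\sum_{i=1}^n\abs{t-s_i}$, where $\kappa=\mnorm{\hat y|_{U^\perp}}\in[0,1]$ is the sine of the angle between $L$ and $F$ (the $\dist$-term being constant when $L\parallel F$). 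The slopes of $g$ between breakpoints are $\kappa\,\sgn(t-t_c)+\sum_{i=1}^n\sgn(t-s_i)$, and the crux is that the point-sum is an integer of the same parity as $n$; hence a zero-slope (flat) interval can occur only when $\kappa\in\setn{0,1}$. This is exactly what isolates the parallel case $\kappa=0$ and the perpendicular, i.e.\ \emph{ortho-collinear}, case $\kappa=1$: in the oblique case $0<\kappa<1$ there is no flat interval, so $g$ has a strict vertex and the minimizer is unique.

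It then remains to decide, in the two admissible cases, when the flat interval is a genuine global minimizer and when it is non-degenerate. For $\kappa=0$ I would separate $L\subseteq F$ from $L\parallel F$, $L\not\subseteq F$: in the latter, moving off $L$ perpendicularly toward $F$ decreases $\dist$ at unit rate while $\sum_{i=1}^n\mnorm{p_i-\cdot}$ stays stationary (all $p_i-x\in U$ are orthogonal to that direction), so the candidate interval is never a global minimum; only $L\subseteq F$ survives, and projecting an arbitrary $x$ onto $F$ shows the global minimum lies on $L$, where $g=\sum_{i=1}^n\abs{t-s_i}$ has a non-degenerate flat interval iff $n$ is even, which is case~\ref{even_case}. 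For $\kappa=1$, a short coordinate computation (writing a general point as a $\hat y$-component plus components in $U$ and in $U^\perp\ominus\RR\hat y$) shows the global minimum again lies on $L$, where $g(t)=\abs{t-t_c}+\sum_{i=1}^n\abs{t-s_i}$ is the one-dimensional Fermat--Torricelli function of the $n+1$ collinear points $\setn{c,p_1,\ldots,p_n}$; its minimizer set is their median interval, non-degenerate iff $n+1$ is even (i.e.\ $n$ odd) and the two central values differ. Finally I would translate the latter into the stated median condition: inserting $c=L\cap F$ into the sorted $s_i$ makes the median interval degenerate exactly when $c$ equals the median of $p_1,\ldots,p_n$, i.e.\ when $F$ contains that median, which yields case~\ref{p_notin_F} (for $n=1$, where $c=\Proj(p_1,F)$ and the interval is $[p_1,c]$) and case~\ref{odd_case}.

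I expect the main obstacle to be the two global-minimality verifications combined with the parity/slope argument: producing a flat interval of $g$ along $L$ is easy, but one must rule out the oblique and the parallel-but-not-contained configurations—where either no flat interval exists or the interval fails to minimize over all of $\EE^d$—and then confirm, via the descent-toward-$F$ estimate and the orthogonal decomposition, that in the two surviving cases the one-dimensional analysis really computes the minimum over the whole space. The remaining bookkeeping, matching non-degeneracy of the median interval of $\setn{c,p_1,\ldots,p_n}$ to the parity of $n$ and to the condition that $F$ contain the median, is the other place where care is needed.
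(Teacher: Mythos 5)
Your proposal is correct, and after the common first step it takes a genuinely different route from the paper. Both arguments begin the same way: a non-degenerate segment of minimizers forces every summand to be affine on it, and since $\mnorm{p_i-\cdot}$ is strictly convex transversal to rays through $p_i$, all the $p_i$ and the segment must lie on one line $L$ (the paper packages this as a separate strict-convexity lemma). From there the paper splits according to whether two minimizers lie in $F$ and, in the ``not both in $F$'' case, invokes its already-established Euclidean optimality condition (Theorem~\ref{thm-Euclidean}\ref{floating}): the identity $\sum_i\frac{x_1-p_i}{\mnorm{x_1-p_i}}=\frac{\Proj(x_1,F)-x_1}{\mnorm{\Proj(x_1,F)-x_1}}$ delivers ortho-collinearity at once, and the parity of $n$ comes from writing $0$ as a sum of $n+1$ unit vectors parallel to $L$; global optimality is then free because the first-order condition is sufficient for convex $f$. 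You instead reduce everything to the one-dimensional piecewise-linear function $g(t)=\kappa\abs{t-t_c}+\sum_i\abs{t-s_i}$ and read off from the slopes that a flat interval forces $\kappa\in\setn{0,1}$ with the matching parity of $n$ --- which is the same parity observation in disguise --- but you must then pay for the extra generality by verifying separately that the global minimum really is attained on $L$ (the projection argument for $L\subseteq F$, the orthogonal decomposition for $\kappa=1$) and by killing the $L\parallel F$, $L\not\subseteq F$ configuration with a first-order descent toward $F$; in that last step you should apply the descent at a point of the segment distinct from all $p_i$, since at $x=p_i$ the term $\mnorm{p_i-\cdot}$ contributes $+1$ to every one-sided directional derivative. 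Your endgame, identifying the minimizing set with the median interval of the $n+1$ collinear points $\setn{c,p_1,\ldots,p_n}$ and translating its degeneracy into ``$F$ contains the median,'' matches the paper's converse construction with the points $q_1,\ldots,q_{n+1}$, and it has the small advantage of absorbing the case $n=1$ into the same computation rather than treating it separately. Net effect: your proof is more elementary and self-contained, the paper's is shorter because it reuses its subdifferential machinery.
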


We shall use the following lemma.

\begin{lemma}
Let $p_1,\ldots,p_n \in \EE^d$ be $n \geq 1$ distinct points. If $L \subseteq \EE^d$ is a straight line satisfying $\setn{p_1,\ldots,p_n} \centernot \subseteq L$, then the restriction of $h: \EE^d \to \RR$, $h(x)= \sum_{i=1}^n \mnorm{p_i-x}$, to $L$ is strictly convex. 
\end{lemma}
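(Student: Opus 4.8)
The plan is to reduce the claim to a one-dimensional problem by parametrizing $L$ affinely. Fix a base point $a \in L$ and a unit direction $u \in \EE^d$, so that $L = \setcond{a+tu}{t \in \RR}$, and write $g \defeq h|_L$ in the form $g(t) = \sum_{i=1}^n g_i(t)$ with $g_i(t) = \mnorm{p_i-a-tu}$. Since the map $t \mapsto a+tu$ is an affine isomorphism $\RR \to L$ and strict convexity is preserved under such reparametrization, it suffices to prove that $g: \RR \to \RR$ is strictly convex.

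First I would record the explicit shape of each summand. Putting $c_i = p_i - a$ and using $\mnorm{u}=1$, a direct expansion and completion of the square give
\[
g_i(t)^2 = \mnorm{c_i}^2 - 2t\skpr{c_i}{u} + t^2 = (t-s_i)^2 + \rho_i^2,
\]
where $s_i = \skpr{c_i}{u}$ and $\rho_i^2 = \mnorm{c_i}^2 - s_i^2 = \mnorm{c_i - s_iu}^2$ equals the squared Euclidean distance from $p_i$ to $L$ (the foot being $a+s_iu \in L$). Hence $g_i(t) = \sqrt{(t-s_i)^2+\rho_i^2}$, which is convex in every case, being the composition of the convex Euclidean norm with an affine map. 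The decisive observation is the dichotomy governed by $\rho_i$: if $p_i \in L$, then $\rho_i = 0$ and $g_i(t) = \abs{t-s_i}$ is merely piecewise affine, hence not strictly convex; if $p_i \notin L$, then $\rho_i > 0$, the function $g_i$ is smooth, and its second derivative $g_i''(t) = \rho_i^2/\big((t-s_i)^2+\rho_i^2\big)^{3/2}$ is strictly positive for all $t$, so $g_i$ is strictly convex.

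To conclude, I would invoke the hypothesis $\setn{p_1,\ldots,p_n} \centernot\subseteq L$, which guarantees at least one index $j$ with $p_j \notin L$; then $g_j$ is strictly convex while the remaining $g_i$ are at least convex. A finite sum of convex functions, of which at least one is strictly convex, is strictly convex, and applying this to $g = \sum_{i=1}^n g_i$ finishes the argument. The one place that genuinely requires care is precisely this last step: strict convexity cannot be extracted termwise, because points lying on $L$ contribute only the affine pieces $\abs{t-s_i}$, so one must isolate the off-line point $p_j$ and use it to upgrade convexity of the sum to \emph{strict} convexity.
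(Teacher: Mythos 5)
Your proof is correct, but it takes a genuinely different route from the paper's. The paper argues by contradiction at the level of midpoints: if $h|_L$ failed to be strictly convex, there would be distinct $x_1,x_2\in L$ with $h\lr{\frac{x_1+x_2}{2}}\geq\frac{h(x_1)+h(x_2)}{2}$, i.e. $\sum_{i=1}^n\mnorm{(p_i-x_1)+(p_i-x_2)}\geq\sum_{i=1}^n\lr{\mnorm{p_i-x_1}+\mnorm{p_i-x_2}}$; since each summand already satisfies the reverse (triangle) inequality, equality must hold termwise, and in the Euclidean norm this forces $p_i-x_1$ and $p_i-x_2$ to be linearly dependent, hence $p_i\in\aff\setn{x_1,x_2}=L$ for every $i$ --- contradicting the hypothesis. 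You instead parametrize $L$, compute $g_i(t)=\sqrt{(t-s_i)^2+\rho_i^2}$ explicitly, verify $g_i''>0$ whenever $\rho_i>0$, and then add one strictly convex summand to the remaining convex ones. Both arguments isolate the same phenomenon --- a point off the line contributes strict convexity, a point on the line contributes only the affine-by-pieces $\abs{t-s_i}$ --- and your closing step (\enquote{convex plus strictly convex is strictly convex}) is exactly the sound replacement for the termwise extraction you rightly flag as unavailable. The trade-off: your computation is Euclidean-specific but yields a quantitative payoff, namely an explicit positive lower bound for $g''$ on compact sets; the paper's argument never leaves the norm level and would carry over verbatim to any strictly convex norm, since it uses only that equality in the triangle inequality forces linear dependence.
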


\begin{proof}
Assume that $h|_L$ is not strictly convex. Then there exist $x_1,x_2 \in L$, $x_1 \neq x_2$, such that $h\lr{\frac{x_1+x_2}{2}} \geq \frac{h(x_1)+h(x_2)}{2}$, which is equivalent to
\begin{equation*}
\sum_{i=1}^n \frac{\mnorm{(p_i-x_1)+(p_i-x_2)}}{2} \geq \sum_{i=1}^n \frac{\mnorm{p_i-x_1}+\mnorm{p_i-x_2}}{2}.
\end{equation*}
Consequently, we have equalities in the triangle inequalities 
\begin{equation*}
\mnorm{(p_i-x_1)+(p_i-x_2)} \leq \mnorm{p_i-x_1}+\mnorm{p_i-x_2}
\end{equation*}
for $i\in\setn{1,\ldots,n}$. Hence $p_i-x_1$ and $p_i-x_2$ are linearly dependent, and $p_i \in \aff\setn{x_1,x_2}=L$. This gives $\setn{p_1,\ldots,p_n} \subseteq L$, a contradiction.\qed
\end{proof}

\begin{proof}[of Proposition~\ref{correction3.1}]
\emph{Case 1: $n=1$. } If $p_1 \notin F$, then the segment $[p_1,\Proj(p_1,F)]$ consists of minimizers of $f$, as can be seen directly or shown by Theorem~\ref{thm-Euclidean}. Of course, if $p_1 \in F$, then $p_1$ is the only minimum point of $f$.

\emph{Case 2: $n \geq 2$. } Suppose that $x_1, x_2$ are two distinct minimum points of $f$. 

Assume for a moment that $\setn{p_1,\ldots,p_n} \centernot\subseteq \aff\setn{x_1,x_2}$. Then the last lemma says that $h|_{\aff\setn{x_1,x_2}}$ is strictly convex, and in turn the restriction of the function $f=h+\dist(\cdot,F)$ to $\aff\setn{x_1,x_2}$ is strictly convex as well. But then $x_1$ and $x_2$ cannot minimize $f$ simultaneously. This contradiction yields $\setn{p_1,\ldots,p_n} \subseteq \aff\setn{x_1,x_2}$, and
\begin{equation*}
\aff\setn{p_1,\ldots,p_n} = \aff\setn{x_1,x_2}
\end{equation*}
is a straight line.

\emph{Case 2.1: $\setn{x_1,x_2} \subseteq F$. } Now the straight line $\aff\setn{p_1,\ldots,p_n}$ is contained in $F$. If $n$ were odd, then the median of $p_1,\ldots,p_n$ would be the only minimizer of $h$ (as can be seen directly or shown by Theorem~\ref{thm-Euclidean}) and in turn also of $f$. This contradiction shows that $n$ is even, and \ref{even_case} is verified.

Conversely, if \ref{even_case} is satisfied, then the Fermat--Torricelli locus consists of the segment between the $\frac{n}{2}$th and the  $\lr{\frac{n}{2}+1}$st point of $p_1,\ldots,p_n$ in their
natural order along $\aff\setn{p_1,\ldots,p_n}$, and $f$ has more than one minimum point.

\emph{Case 2.2: $\setn{x_1,x_2} \centernot\subseteq F$. } Since the Fermat--Torricelli locus is convex (see Proposition~\ref{prop-locus_general}), the whole segment $[x_1,x_2]$ consists of minimum points of $f$. Hence there is no loss of generality if we assume that $x_1 \notin F \cup \setn{p_1,\ldots,p_n}$. Then, by Theorem~\ref{thm-Euclidean}\ref{floating}, 
\begin{equation}
\frac{\Proj(x_1,F)-x_1}{\mnorm{\Proj(x_1,F)-x_1}}+ \sum_{i=1}^n \frac{p_i-x_1}{\mnorm{p_i-x_1}}=0,\label{eq-Euclidean}
\end{equation}
and
\begin{align*}
\sum_{i=1}^n \frac{x_1-p_i}{\mnorm{x_1-p_i}}=\frac{\Proj(x_1,F)-x_1}{\mnorm{\Proj(x_1,F)-x_1}} &\in (F-F)^\perp \setminus \setn{0},\\
x_1+ \mnorm{\Proj(x_1,F)-x_1} \sum_{i=1}^n \frac{x_1-p_i}{\mnorm{x_1-p_i}}&=
\Proj(x_1,F)\\&\in \aff\setn{p_1,\ldots, p_n,x_1} \cap F.
\end{align*}
The first equation shows that the direction of the straight line
\begin{equation*}
\aff\setn{p_1,\ldots,p_n}=\aff\setn{p_1,\ldots, p_n,x_1}
\end{equation*}
is in $(F-F)^\perp$. The second one gives $\aff\setn{p_1,\ldots,p_n} \cap F \neq \emptyset$. Thus $p_1,\ldots,p_n$ are ortho-collinear to $F$.
Moreover, in \eqref{eq-Euclidean} the zero vector is represented as sum of $n+1$ vectors of unit length parallel to $\aff\setn{p_1,\ldots,p_n}$. Hence $n+1$ is even, and $n$ is odd. Finally, if $F$ contained the median of $p_1,\ldots,p_n$, then this median would be the unique minimizer of $f$, since it would be the unique minimizer of $h$ and a minimizer of $\dist(\cdot,F)$. This contradiction completes the verification of \ref{odd_case}.

Conversely, suppose that \ref{odd_case} is satisfied. Let $\setn{p_0}=\aff\setn{p_1,\ldots,p_n} \cap F$ and let $q_1,\ldots,q_{n+1}$ denote the points $p_0,\ldots,p_n$ according to their natural order along
the straight line $\aff\setn{p_1,\ldots,p_n}$. If $p_0=p_i$ for some $i\in \setn{1,\ldots,n}$, then this point is represented by two indices: $p_0=p_i=q_j=q_{j+1}$. The set of minimizers of $f$ is given by the segment $ \clseg{q_{\frac{n+1}{2}}}{q_{\frac{n+3}{2}}}$, because $\aff\setn{p_1,\ldots,p_n} \perp F$. Since $p_0$ is not the median of $p_1,\ldots,p_n$, we obtain $q_{\frac{n+1}{2}} \neq q_{\frac{n+3}{2}}$, and $f$ has infinitely many minimum points.\qed
\end{proof}

\section{Perspectives and Interesting Open Problems}
\label{chap:perspectives}
It is clear that one can go on with generalizing minsum location problems based on our results in a very natural way. We list now different possibilities for further research in such directions. As the reader will observe,  there are various natural overlaps between these problems and topics.

We have considered the case where the dimension of $X$ is finite. This way we could use arguments of compactness, and there was only one vector space topology on $X$. But what happens if the dimension of $X$ is infinite? What concepts and statements are preserved, and what is different? And what are appropriate additional assumptions to obtain results similar to the finite-dimensional setting?

In the classical Fermat--Torricelli problem, we have a nice characterization of the case where the solution set is unique. What happens in the generalized setting(s)? Can one characterize the cases where we have an empty solution set, a unique solution, a bounded solution set, an affine flat as solution set, etc.? What is the affine dimension of the solution set? Can one characterize other geometric properties of the solution set? A complete characterization of the Fermat--Torricelli locus of three Euclidean balls with distinct centers in Euclidean space can be found in \cite[Sect.~4.1]{NamHoAn2014}. There is also a short discussion, why we do not need \emph{maximal time functions} in the Euclidean setting, for Euclidean balls. It should be checked whether this remains valid in our setting if $K_i$ is a homothet of $\mcg{\gamma_i}{0}{1}$. However, incorporating maximal time functions is still legit in other cases. A general treatment can be found in \cite{NamHo2013}.

Is there a nice geometric counterpart of Theorem~\ref{thm:hilbert_minisum_single}\ref{hilbertcase_c} for the non-Euclidean case? One might expect that the right-hand set works in general, whereas the vector on the left-hand side has to be replaced by some set. Then the criterion would say that the intersection of the sets is non-empty (all this is very vague so far). Since our theorems on optimality conditions are just special cases of Fermat's rule, the question on Theorem~\ref{thm:hilbert_minisum_single}\ref{hilbertcase_c} is in fact: Can we write the subdifferential of the non-Euclidean distance function in terms of non-Euclidean projections? Often the answer is \enquote{yes} (see \cite[pp.~437--444]{MordukhovichNa2011b}). Analogous results for more general subdifferentials than the convex one can be found in \cite{MordukhovichNa2011a}.

One might extend weighted minsum location problems also such that non-positive weights occur. There exists already some related literature \cite{AnNaYe2014,TaoHo1997,NickelDu1997,ChenHaJaTu1992,ChenHaJaTu1998}, but D.C. programming is not as well studied as convex programming.

One could also look at analogues of minsum location problems in Hadamard spaces. These are complete metric spaces with non-positive curvature. Since geodesics are unique in these spaces, one can still use convex analysis and therefore convex programming. But there is even less literature (see, \zB \cite{Banert2014} and the references therein).

It is also natural to study minsum location problems on manifolds. The book \cite{Udriste1994} on convex optimization is related; it has a small chapter on Finsler manifolds.

\section{Conclusions}
\label{chap:conclusions}
Inspired by various well-known results on the Fermat--Torricelli problem considered in finite dimensional real Banach spaces and by its analogues for searched minimizing hyperplanes or spheres, in the present paper this problem is generalized to arbitrary convex distance functions (gauges), and at the same time the participating geometric configuration is extended to families of arbitrary convex sets. This two-fold generalization of existing results is reached via certain preliminary steps (\zB by considering gauges, but still finite point sets as geometric configurations) which are interesting for themselves and therefore presented, too. It turns out that many results known from the literature suitably carry over to this more general setting, but some of them do not. For deriving these new results, basic geometric notions from Banach space theory are extended to gauges, directly yielding interesting questions for further generalizations (for instance, to extend the problem to infinite-dimensional vector spaces with gauges). Hence we pose also a couple of new research problems. From our paper one can read off the power of geometric tools and of methods from convex analysis to obtain far-reaching, but natural generalizations of one of the oldest and most famous problems from continuous location science.

\begin{acknowledgements}
This research was partially worked out when the fourth named author held a visiting professorship at the Faculty of Mathematics of the Otto von Guericke University of Magdeburg, Germany.
\end{acknowledgements}

\end{document}